\newcommand{\erdos}{Erd\H{o}s-R\'enyi}
\newenvironment{enumeratei}{\begin{enumerate}[\upshape (i)]}{\end{enumerate}}
\newenvironment{enumeratea}{\begin{enumerate}[\upshape (a)]}{\end{enumerate}}
\newenvironment{enumeraten}{\begin{enumerate}[\upshape 1.]}{\end{enumerate}}
\newtheorem{definition}{Definition}
\newtheorem{lemma}{Lemma}
\newtheorem{remark}{Remark}
\newtheorem{thm}{Theorem}[section]
\newtheorem{lem}[thm]{Lemma}
\newtheorem{cor}[thm]{Corollary}
\newtheorem{prop}[thm]{Proposition}
\newtheorem{defn}[thm]{Definition}
\newtheorem{ass}[thm]{Assumption}
\newtheorem{conj}[thm]{Conjecture}   
\newtheorem{construction}[thm]{Construction}
\theoremstyle{definition}
\newtheorem{rem}[thm]{Remark}
\def\beq{ \begin{equation} }
\def\eeq{ \end{equation} }
\def\square{\vcenter{\vbox{\hrule height .4pt
  \hbox{\vrule width .4pt height 5pt \kern 5pt
        \vrule width .4pt} \hrule height .4pt}}}
\def\ep{\epsilon}
\def\ZZ{\mathbb{Z}}
\def\var{\hbox{var}\,}
\definecolor{darkblue}{rgb}{0,0,0.6}
\def\SWG{{\sf SWG}}
\def\ep{\varepsilon}
\def\1{\ind}
\def\CC{\mathcal{C}}
\def\deg{\mathrm{deg}}
\renewcommand{\le}{\leqslant} 
\renewcommand{\ge}{\geqslant} 
\renewcommand{\leq}{\leqslant} 
\renewcommand{\geq}{\geqslant}
\newcommand{\ind}{\mathds{1}}
\newcommand{\eps}{\varepsilon}
\newcommand{\set}[1]{\left\{#1\right\}}
\newcommand{\probc}{\stackrel{\mathrm{P}}{\longrightarrow}}
\def\qed{ \hfill $\blacksquare$}  
\newcommand{\cA}{\mathcal{A}}\newcommand{\cC}{\mathcal{C}}
\newcommand{\cG}{\mathcal{G}}
\newcommand{\cP}{\mathcal{P}}\newcommand{\cR}{\mathcal{R}}
\newcommand{\vB}{\mathbf{B}}
\newcommand{\vD}{\mathbf{D}}
\newcommand{\vd}{\mathbf{d}}
\newcommand{\vp}{\mathbf{p}}\newcommand{\vq}{\mathbf{q}}
\newcommand{\mvzero}{\boldsymbol{0}}
\newcommand{\mvpi}{\boldsymbol{\pi}}
\newcommand{\mvxi}{\boldsymbol{\xi}}
\newcommand{\bR}{\mathbb{R}}
\newcommand{\bZ}{\mathbb{Z}}        
\newcommand{\sP}{\mathscr{P}}
\newcommand{\ldown}{l^2_{\downarrow}}
\newcommand{\bfC}{\boldsymbol{C}}
\DeclareMathOperator{\E}{\mathds{E}}
\DeclareMathOperator{\pr}{\mathds{P}}
\DeclareMathOperator{\opt}{opt}  
\newcommand{\BP}{{\sf BP}}
\newcommand{\sss}{\scriptscriptstyle}
\newcommand{\convd}{\stackrel{d}{\longrightarrow}}
\newcommand{\convp}{\stackrel{P}{\longrightarrow}}
\DeclareMathOperator{\CM}{CM}
\definecolor{aqua}{rgb}{0.0, 1.0, 1.0}
\definecolor{boo}{rgb}{1.0, 0.0, 1.0}
\definecolor{stred}{rgb}{1.0, 0.44, 0.37}
\newcommand{\stod}{\preceq_{\mathrm{st}}}
\newcommand{\sustod}{\succeq_{\mathrm{st}}}
\begin{document}

\title[Critical FPP on random graphs]{Critical first passage percolation on random graphs}

\subjclass[2010]{Primary: 60C05; secondary: 05C80, 90B15}
\keywords{First passage percolation, critical percolation, random graphs, age-dependent branching processes.}
\author[Bhamidi]{Shankar Bhamidi$^*$}
\email{bhamidi@email.unc.edu}
\address{$^*$ Department of Statistics and Operations Research, University of North Carolina at Chapel Hill, USA}

\author[Durrett]{Rick Durrett$^\dagger$}
\email{rtd@math.duke.edu}
\address{$\dagger$ James B. Duke Emeritus Professor of Mathematics, Duke University, USA}

\author[Huang]{Xiangying Huang$^*$}
\email{zoehuang@unc.edu}


\begin{abstract}
In 1999, Zhang \cite{Zhang99} proved that, for first passage percolation on the square lattice $\bZ^2$ with \emph{i.i.d.}~non-negative edge weights, if the probability that the passage time distribution of an edge $\pr(t_e = 0) =1/2 $, the critical value for bond percolation on $\bZ^2$,  then the passage time from the origin $\mvzero$ to the boundary of $[-n,n]^2$ may converge to $\infty$ or stay bounded depending on the nature of the distribution of $t_e$ close to zero. In 2017, Damron, Lam, and Wang \cite{Damron} gave an easily checkable necessary and sufficient condition for the passage time to remain bounded. Concurrently, there has been tremendous growth in the study of weak and strong disorder on random graph models. Standard first passage percolation with {\it strictly positive} edge weights provides insight in the weak disorder regime e.g. \cite{BHH10b}. Critical percolation on such graphs provides information on the strong disorder (namely the minimal spanning tree) regime \cite{addario2017scaling}. 
   Here we consider the analogous problem of Zhang but now for a sequence of random graphs $\set{\cG_n:n\geq 1}$ generated by a supercritical configuration model with a fixed degree distribution. Let $p_c$ denote the associated critical percolation parameter, and suppose each edge  $e\in \mathcal{E}(\cG_n)$ has weight $t_e \sim p_c \delta_0 +(1-p_c)\delta_{F_\zeta}$ where $F_\zeta$ is the cdf of a random variable $\zeta$ supported on $(0,\infty)$. The main question of interest is: when does the passage time between two randomly chosen vertices have a limit in distribution in the large network $n\to \infty$ limit? There are interesting similarities between the answers on $\ZZ^2$ and on random graphs, but it is easier for the passage times on random graphs to stay bounded. 

 \end{abstract}

\maketitle

\section{Introduction}

First passage percolation (FPP), namely networks where each edge $e$ has an associated non-negative passage time $t_e$, sampled in an \emph{i.i.d.}~fashion from a distribution $F$,  has been extensively studied on $\ZZ^d$ and on random graphs. For a comprehensive overview of established results, see \cite{50yrs, vdHSF}. The main goal of this paper is to investigate critical FPP, where the passage time is equal to zero with probability $p_c$, the critical value for bond percolation on the graph.

We begin by describing the work of Zhang \cite{Zhang99} and Damron, Lam, and Wang \cite{Damron} on $\ZZ^2$. In this case, $p_c=1/2$ and it is known that there is no percolation (existence of an infinite connected component) at the critical value \cite{kesten1980critical}. 
Let $B_n= [-n,n]^2\cap \bZ^2$ and $\partial B_n=\{x\in \bZ^2: \|x\|_\infty=n\}$ denote its boundary. The passage time from 0 to  $\partial B_n$, denoted by $T(0,\partial B_n)$, refers to the minimum passage time associated with $\{t_e: e\in \mathcal{E}(\bZ^2)\}$ over all paths between 0 and $\partial B_n$. 
For a given passage time distribution $F$, as $n \to\infty$ the passage time $T(0,\partial B_n)$ increases to a limit 
$$\rho(F):=\lim_{n\to\infty} T(0,\partial B_n),$$
which is a random variable and Kolmogorov's zero-one law tells us that $\pr(\rho(F)=\infty)\in \{0,1\}$.

One major question is to understand  for which distributions $F$ we have $\rho(F) < \infty$, meaning that $T(0,\partial B_n)$ stays bounded as $n \to\infty$.  It is not hard to show that if $F$ has no mass in $(0,\delta]$ for some $\delta>0$ then $T(0,\partial B_n) \to \infty$ as $n\to\infty$, and the main determining factor is the amount of mass near zero. Zhang created two families of distributions to study the question. Here for parameters $a,b>0$: 
\begin{equation}
F_a(x) = \begin{cases} 0 & x <0 \\ p_c + x^a & 0 \le x^a \le 1-p_c \\
1 & p_c + x^a \ge 1, \end{cases} 
\label{eqn:Fa-def}
\end{equation}

\begin{equation}
G_b(x) = \begin{cases} 0 & x <0 \\ p_c + \exp(-1/x^b) & 0 \le \exp(-1/x^b) \le 1-p_c \\
1 & p_c + \exp(-1/x^b) \ge 1. \end{cases}
\label{eqn:Gb-def}
\end{equation}

\noindent
In \cite{Zhang99} Zhang  proved that:

\begin{enumeratea}
\item
If $a$ is sufficiently small then $\rho(F_a) < \infty$ almost surely;

\item
If $b>1$ then $\rho(G_b) = \infty$ almost surely.
\end{enumeratea}

\noindent
He conjectured that $\sup\{ a>0: \rho(F_a) < \infty \}< \infty$. Damron, Lam, and Wang in \cite{Damron} proved a definitive result which showed that the conjecture was not correct. Defining $\eta_0 = \sup\{ \eta\ge 0: E(t_e^{\eta/4}) < \infty\}$ and assuming $\eta_0>1$, \cite{Damron} proved that there exist constants $0 < C_1(F) \leq C_2(F) <\infty$ such that, 
\begin{equation}
 \label{damron-et-al}
 C_1 \sum_{k=2}^n F^{-1}(p_c + 2^{-k}) \le \E(T(0,\partial B_{2^n})) \le C_2 \sum_{k=2}^n F^{-1}(p_c + 2^{-k}). 
 \end{equation}

\noindent
This implies that $\rho(F) < \infty$ if and only if 
$ \sum_{k=2}^\infty F^{-1}(p_c + 2^{-k})  < \infty$. 
It follows easily from their criterion that:

\begin{enumeratea}
\item
$\rho(F_a) < \infty$ a.s. for all $0< a < \infty$

\item
$\rho(G_b) = \infty$ a.s. if and only if $b \ge 1$.

\end{enumeratea}

\noindent
Before moving on to our setting we should observe that the proofs of the above results rely on special properties of two dimensional percolation, and the questions in $d>2$ are difficult open problems.

\subsection{Random graph setting}
\label{sec:rg-cm}

We will use the standard notation $ \convp, \overset{d}{\longrightarrow}$ to denote convergence in probability and in distribution, respectively. A sequence of events $(E_n)_{n\geq 1}$ is said to occur with high probability (whp) with respect to probability measures $(\pr_n)_{n\geq 1}$ if $\pr_n(E_n)\to 1$. We write $f_n = O_{\pr}(g_n)$ if $(|f_n|/|g_n|)_{n\geq 1}$ is tight; $f_n=\Omega_{\pr}(g_n)$ if $(|g_n|/|f_n|)_{n\geq 1}$ is tight; $f_n=\Theta_{\pr}(g_n)$ if $f_n=O_{\pr}(g_n)$ and $f_n=\Omega_{\pr}(g_n)$; $f_n =o_{\pr}(g_n)$ if $(|f_n |/|g_n |)_{n\geq 1}$ converges in probability to zero.

We now describe the network model of interest in this paper.

\begin{figure}[htbp]

\def\degrees{{3,2,2,2,1}}

\begin{tikzpicture}[scale=.9, transform shape, every node/.style={circle, ball color=blue!30, minimum size=6mm, inner sep=3pt}]
    \node (A) at (0,0) {1};
    \node (B) at (2,1.5) {2};
    \node (C) at (2,-1.5) {3};
    \node (D) at (4,0) {4};
    \node (E) at (6,0) {5};

    \foreach \i in {1,2,3}
        \draw[thick, red] (A) -- ++(-1.5, {(\i-2)*0.5});
    \foreach \i in {1,2}
        \draw[thick, red] (B) -- ++(1, {(\i-1)*0.5});
    \foreach \i in {1,2}
        \draw[thick, red] (C) -- ++(1, {(\i-1)*0.5});
    \foreach \i in {1,2,3,4}
        \draw[thick, red] (D) -- ++(1, {(\i-2.5)*0.5});
    \draw[thick, red] (E) -- ++(1, 0);

\end{tikzpicture}
 \hspace{1in}
\begin{tikzpicture}[scale=.74, transform shape, every node/.style={circle, ball color=blue!30, minimum size=6mm, inner sep=3pt}, every edge/.style={draw, thick}]
    \node (A) at (0,0) {1};
    \node (B) at (2,1.5) {2};
    \node (C) at (2,-1.5) {3};
    \node (D) at (4,0) {4};
    \node (E) at (6,0) {5};

    \draw[black, very thick] (A) -- (B);
    \draw[black, very thick] (A) -- (C);
    \draw[black, very thick] (A) -- (D);
    \draw[black, very thick] (B) -- (D);
    \draw[black, very thick] (C) -- (D);
    \draw[black, very thick] (D) -- (E);

\end{tikzpicture}
\label{fig:CM}
\caption{Example of the Configuration model, with the picture on the left showing the initial set of half edges or stubs, while the right picture showing an example of the realization of the matching of these half edges. }
\end{figure}
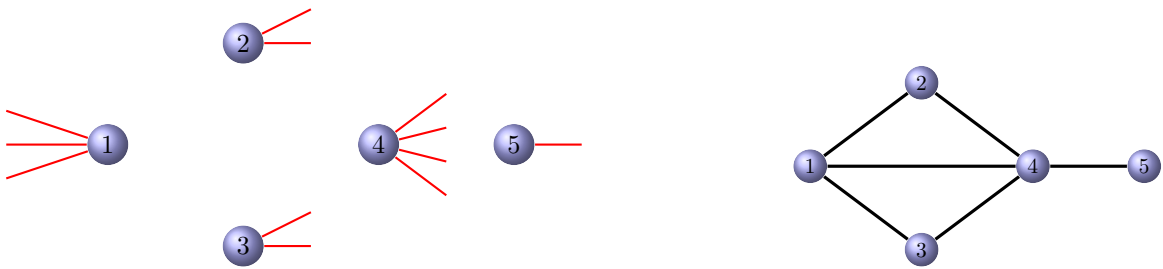

 Write $[n]:=\set{1,2,\ldots n}$. Fix a probability mass function (pmf) $\vp = \set{p_k:k\geq 0}$ on $\bZ_+$. Let $\set{\cG_n:n\geq 1}$ be a sequence of graphs generated using the configuration model with degree distribution $\vp$, namely $\cG_n$ is a random graph with vertex set $[n]$ constructed via:
\begin{enumeratea}
\item Let $\set{d_u:u\in [n]} \sim_{i.i.d.} \vp$. Think of vertex $u$ starting with $d_u$ half-edges and assume $\sum_{u\in [n]} d_u$ is even, else modify the degree of vertex $v=n$ from $d_n$ to $d_n+1$. 
\item Starting with the $\sum_{u\in [n]} d_u$ active half-edges (sometimes called \emph{stubs}), sequentially and at each stage randomly select two half-edges amongst the current set of active half-edges, merge them to form a full edge, and remove these half-edges from the active set. Continue this process until all active half-edges have been exhausted. 
\end{enumeratea}
See \cites{bollobas1980probabilistic,bender1978asymptotic,molloy1995critical} and the subsequent citations for the relevance of this model in probabilistic combinatorics and its applications. Figure \ref{fig:CM} gives a visualization of this construction.

 We write $\cG_n \sim \CM_n(\vp)$ for the corresponding graph. Now assume every edge $e\in \mathcal{E}(\cG_n)$ has a random edge weight $t_e$ with distribution $F$, independent across edges.   Write $D\sim \vp$ and $t_e\sim F$ for generic random variables representing the degree and edge-weight distribution respectively.   
 \begin{ass}[Assumptions on degree and edge weights]
 \label{ass}

 \ 
 \begin{enumerate}[({A}1)]
 \item {\bf Supercriticality:} Assume $\pr(D\geq 2) =1, \pr(D=2) <1$. This automatically implies that the fraction of the vertices in the giant component goes to $1$ as $n \to\infty$ \cites{molloy1995critical,molloy1998size}. 
 \item {\bf Moment conditions:} Assume $\E[D^{3+\eta}]<\infty$ for some $\eta>0$. Write $\nu = \sum_{k=2}^\infty k(k-1)p_k/\sum_k k p_k$. By our assumptions, $1<  \nu < \infty$. 

 \item {\bf Critical percolation and passage time mass at zero:} Assume that the edge-weight distribution satisfies $\pr(t_e = 0) = 1/\nu$. 
 \end{enumerate}
 \end{ass}
 
We now explain why (A3) is called the ``critical percolation'' regime. First, the constant $\nu$ in (A2) is the mean of the \textit{size-biased} distribution of $\vp$, denoted by $\vq=\{q_k:k\geq 0\}$ with
\begin{equation} \label{size_biased} 
q_k = \frac{(k+1)p_{k+1}}{\sum_j j p_j},\qquad k\geq0.
\end{equation}

 Next, it is known \cites{molloy1995critical,janson2009new} that the critical percolation threshold $p_c$ on $\cG_n\sim \CM_n(\vp)$ is $p_c  = 1/\nu$, namely for independent edge percolation on $\cG_n$ with parameter $p$:
 \begin{enumeratea}
  \item When $p< p_c$ the size of the largest component $\cC_{\sss(1)} = o_{\pr}(n)$. 
  \item For $p> p_c$, $\cC_{\sss(1)}/n \convp f(p)$ for a strictly positive function $f(\cdot)$. 
  \end{enumeratea}  
There is in fact an entire critical scaling window of the form $p_c(\lambda) = 1/\nu + \lambda/n^{1/3}$ for any $\lambda \in \bR$ \cites{joseph,dhara2017critical}. We discuss the insensitivity of our results on this second order perturbation in Section \ref{sec:disc} namely the lack of dependence of the limits derived in this paper on the value $\lambda$. In Section \ref{sec:disc} we also describe how various other functionals of the model such as maximal sizes of so-called \emph{zero-weight clusters} sensitively do depend on $\lambda$.

\begin{defn}[Main functionals of interest]
\label{def:main-func}
For any pair of vertices $u\neq v \in [n]$, let $\sP_n(u,v)$ denote the collection of all (self-avoiding) paths from $i,j$. For any path $\mvpi \in \sP_n(u,v)$ let the total passage time using this path be given by $T(\mvpi) = \sum_{e\in \mvpi} t_e$ and let $|\mvpi|$ denote the number of edges in $\mvpi$.  Let $T_n(u,v) = \min_{\mvpi \in \sP_n(u,v)} T(\mvpi)$ denote the optimal passage time between these two vertices and 
\[\sP_{\opt}(u,v) = \set{\mvpi \in \sP_n(u,v): \mvpi = \arg \min T(\mvpi) },\]
denote the collection of optimal paths. Let 
$H_n(u,v) = \min \set{|\mvpi|: \mvpi \in \sP_{\opt}(u,v) },$
denote the hopcount, namely the minimal number of edges amongst all optimal paths between $u,v$. 
\end{defn}
 By symmetry in the construction, for any $u,v\in [n]$ such that $u\neq v$, $T_n(u,v) \stackrel{d}{=} T_n(1,2)$ and similarly for the hopcount. 

\subsection{Setting the stage}
\label{sec:stage}
Before describing our results, we briefly discuss what is known in related settings for such edge disorder models on random graphs:
\begin{enumeratei}
\item Suppose each edge weight is strictly positive (i.e., no mass at zero), then under some further technical conditions, \cite{BHH17} shows that there are (model dependent) strictly positive constants $\alpha, \beta, \gamma >0$ and non-trivial, model dependent, finite random variables $\Xi$ such that 
\begin{equation} 
\label{eqn:1050}
T_n(1,2) - \frac{1}{\alpha}\log{n} \convd \Xi, \qquad \frac{H_n(1,2) - \beta \log{n}}{\sqrt{\gamma \log{n}}} \convd Z,
\end{equation}
where $Z\sim N(0,1)$. The same arguments as used in this paper coupled with \cite{BHH17} should imply that (at least up to first order), the above asymptotics should continue to hold as long as $\pr(t_e =0) < p_c$. On the other extreme, if $\pr(t_e = 0) > p_c$, then a zero-weight cluster percolates, i.e., there is a giant component of all zero-weight edges so $\liminf_{n\to\infty} \pr(T_n(1,2) = 0) > 0$. 
\item Now suppose $\pr(t_e = 0) = p_c$. Suppose we first lay down the zero-weight  edges and initially let all other edges have weight $\infty$. Then the network gets disconnected into a collection of zero-weight ``super conducting'' components (in the sense that it takes zero time to traverse them). Under our moment conditions the number of vertices in the {\bf maximal} zero-weight component scale like $n^{2/3}$ while their geometry, in terms of graph distance between typical points scales like $n^{1/3}$ \cites{dhara2017critical,goldschmidt2020stable}; one impetus for deriving such detailed results is understanding the nature of the minimal spanning tree on such graphs with positive random edge lengths with a continuous distribution \cite{addario2017scaling}.
\end{enumeratei}

\section{Results}
\label{sec:results}
For convenience we write the edge distribution as
\beq
\label{edg:decomp}
F(t) = \begin{cases}  p_c &  \text{ for }t=0, \\ p_c + (1-p_c) F_{\zeta}(t) & \text{ for } t >0 \end{cases}
\eeq
where $F_\zeta(t)$ represents the strictly \textit{positive part} of the edge passage time and satisfies $F_\zeta(0) = 0$. 

\subsection{Necessary and sufficient conditions for bounded passage times}
\label{sec:nsc-cond}

The positive part $F_\zeta$  of the edge-weight distribution as in  \eqref{edg:decomp} is said to be {\bf min-summable} if there exists some $\ep>0$ such that, 
$$
\int_0^\ep F_\zeta^{-1}( \exp(-1/y)) \frac{dy}{y} < \infty,
$$
or equivalently
\beq
\int_{1/\ep}^\infty F_\zeta^{-1}(e^{-u}) \frac{du}{u} < \infty,
\label{PLexp}
\eeq
see \cite[Lemma 5.8]{Kom}. This condition is adapted from the min-summability criterion for branching processes, see Theorem 5.7  in \cite{Kom}. The phrase ``min -summable" comes from the equivalent condition in \cite[Eqn 5.9]{Kom} that shows that \eqref{PLexp} is equivalent to the condition that for any $\alpha \in (0,1)$ and 
\[\sum_{n=1}^\infty F_\zeta^{-1}\left(\frac{1}{e^{1/\alpha^n}}\right) < \infty. \]
This condition should be contrasted with the condition \eqref{damron-et-al} from \cite{Damron} on $\bZ^2$.

\begin{thm}
\label{thm:main}
Under Assumptions \ref{ass}, typical passage times on $\cG_n \sim \CM_n(\vp)$ are bounded, i.e., $T_n(1,2) = O_{\pr}(1)$, if and only if the positive part of the passage time $F_\zeta$ is {\bf min-summable}. Further, if $F_\zeta$ is min-summable then 
$$T_n(1,2) \convd \Xi,$$ 
where $\Xi = \Xi(\vp, \zeta)$ is a model dependent finite random variable. 
\end{thm}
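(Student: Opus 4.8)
The plan is to transfer the problem to an age-dependent (Crump--Mode--Jagers) branching process approximation of the local neighbourhood of a typical vertex, and then use the known exploration picture for the supercritical configuration model to glue two such explorations together. The starting point is the standard fact that, rooted at vertex $1$ (resp.\ vertex $2$), the neighbourhood of $\cG_n$ together with the edge weights $t_e$ converges locally (in the Benjamini--Schramm sense) to a marked Galton--Watson tree: the root has offspring distribution $\vp$, every non-root vertex has offspring distribution equal to the size-biased law $\vq$ of \eqref{size_biased}, and each edge carries an independent copy of $t_e \sim F$. Assigning to each edge the weight $t_e$ and exploring in order of accumulated weight turns the FPP exploration from vertex $1$ into an age-dependent branching process whose ``birth times'' along an edge are $t_e$; crucially, because $\pr(t_e = 0) = 1/\nu = p_c$ and the size-biased mean is $\nu$, the mean number of \emph{instantaneously reached} offspring is exactly $\nu \cdot (1/\nu) = 1$, so the zero-weight part of this branching process is \emph{critical}. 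The positive part of $F$, i.e.\ $F_\zeta$, controls how quickly the process escapes to larger ages, and the min-summability condition \eqref{PLexp} is precisely the Komj\'athy criterion (Theorem 5.7 in \cite{Kom}) for such an age-dependent branching process to ``explode'', i.e.\ to reach an infinite number of individuals in finite time; equivalently, $\inf_t \{\text{first passage front at time } t \text{ is infinite}\}$ is finite a.s.\ with positive probability.

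The key steps, in order, are: (1) set up the local weak limit and the coupling of the FPP exploration from vertex $1$ (and independently from vertex $2$) with the marked branching process, controlling the error of this coupling up to the point where $\Theta(n^{2/3})$ vertices have been reached (this is the regime where the tree approximation is still valid, by the moment assumption (A2) $\E[D^{3+\eta}]<\infty$); (2) when $F_\zeta$ is min-summable, invoke Komj\'athy's explosion criterion to conclude that the branching-process first-passage front from each root becomes infinite at some a.s.-finite random time $\Xi_1$, $\Xi_2$, and that the ``explosion'' happens while only $o(n)$ vertices — in fact $n^{o(1)}$, or at least a slowly growing number — have been explored, so the coupling has not yet broken; (3) show that once each of the two explorations has reached a linear-in-some-growing-scale, or more precisely $\omega(1)$, number of vertices \emph{within} their respective giant-component pieces, the two fronts must intersect whp, because the giant component of $\CM_n(\vp)$ has the expander-like property that two sets of size $s_n \to \infty$ are joined by an edge (indeed by an edge of zero weight with probability bounded below, or after paying only a vanishing additional positive cost) whp; hence $T_n(1,2) \le \Xi_1 + \Xi_2 + o_{\pr}(1)$ and, matching lower bounds from the same coupling, $T_n(1,2) \convd \Xi$ with $\Xi$ built from two independent copies of the explosion time plus the (degenerate in the limit) cost of the final connection. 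For the converse direction (boundedness implies min-summability), one argues the contrapositive: if $F_\zeta$ is not min-summable, Komj\'athy's theorem says the branching process does not explode, so the first-passage front from vertex $1$ grows to size $s$ only after time $\tau(s) \to \infty$ as $s\to\infty$; since reaching vertex $2$ requires the front to contain $\Omega_{\pr}(\log n / \log\log n)$ vertices (a lower bound on the ``width'' needed, coming from the branching structure), $T_n(1,2) \succeq \tau(\text{something} \to\infty) \to \infty$, contradicting tightness.

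For the distributional convergence specifically, the cleanest route is: condition on the two local neighbourhoods, run the two age-dependent branching processes to their explosion times $\Xi_1,\Xi_2$ (finite a.s.\ under min-summability), observe that the explosion happens through a subtree of bounded-in-$n$ but unbounded size, then show that the ``residual'' connection cost $T_n(1,2) - (\Xi_1 + \Xi_2)$ converges to $0$ in probability. This residual-cost step uses that among the many vertices reached near the explosion time from side $1$ and side $2$, one can find a pair connected by a path all of whose positive-weight edges have weight $< \epsilon$ (using that $F_\zeta$ puts positive mass on $(0,\epsilon)$ for every $\epsilon$, together with the branching abundance), so the extra cost is $o_{\pr}(1)$. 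The limit random variable is then $\Xi \equald \Xi_1 + \Xi_2$ where $\Xi_1$ has the law of the explosion time of the age-dependent BP with root offspring $\vp$ and $\Xi_2$ that with root offspring $\vq$... — one must be slightly careful here about which offspring law sits at the root versus whether the two explorations are genuinely independent (they are, asymptotically, since the two neighbourhoods are disjoint whp up to the relevant scale), and about the fact that the two explosion times must be made on the \emph{same} probability space only through the final gluing.

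The main obstacle I expect is step (2)--(3): quantifying how \emph{fast} the branching-process first-passage front becomes large near the explosion time, and then certifying that the two fronts connect in $\cG_n$ with only $o_{\pr}(1)$ extra weight. Explosion of an age-dependent branching process is an asymptotic statement about an infinite tree; to exploit it on the finite graph $\cG_n$ one needs an effective version: for every $\delta>0$ there is a finite (random) number $M$ of individuals whose birth times are all below $\Xi_1 + \delta$, and $M$ must be allowed to grow with $n$ only slowly enough that the tree coupling survives (so $M = M_n \to\infty$ but $M_n = n^{o(1)}$). Marrying this with the configuration-model ``sprinkling'' estimate — that two vertex sets of size $M_n$ in the giant are whp connected by a low-weight path — is the technical heart; it is exactly the place where the structure of $\CM_n(\vp)$ (as opposed to $\bZ^2$) makes boundedness \emph{easier}, which is the phenomenon the abstract advertises, so one should expect the argument to lean on expansion of the configuration model rather than on anything delicate about the weight distribution beyond the Komj\'athy criterion.
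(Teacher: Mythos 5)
Your overall frame---couple the FPP exploration from each of the two vertices with an age-dependent branching process, identify boundedness of $T_n(1,2)$ with explosion of that process via Komj\'athy's criterion, and glue the two explorations with a vanishing extra cost---is the same as the paper's, and your converse (non-explosion forces the front to take time $\to\infty$ before it can reach the other uniformly chosen vertex) is in the right spirit. However, there are two genuine gaps. First, you invoke the explosion criterion without ever verifying its hypothesis. Min-summability of $F_\zeta$ characterizes explosion only for a \emph{plump} (in particular infinite-mean) offspring law of the transformed process $\BP(F^*,h^*)$; the raw size-biased law $\vq$ may well be bounded, and an age-dependent process with finite-mean offspring and strictly positive lifetimes never explodes. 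The entire point of criticality $\pr(t_e=0)=p_c=1/\nu$ is that the zero-weight clusters, when collapsed, produce an effective offspring (the number of positive-weight stubs on a zero-weight cluster) with tail $\asymp m^{-1/2}$; this is exactly what Lemmas \ref{size_cluster} and \ref{cluster_degree} establish (via Kemperman's formula, Lemma \ref{Kemperman}, and a local CLT), and it is the step where criticality enters. You note that the zero-weight part is critical but never draw this consequence, so your ``if'' direction does not yet follow from the cited criterion; indeed with $\pr(t_e=0)<p_c$ and the same min-summable $F_\zeta$ the passage time is of order $\log n$, so some such verification is unavoidable.

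Second, your gluing step would fail as stated. Two disjoint vertex sets of size $s_n\to\infty$ (a fortiori $n^{o(1)}$, which is all your coupling budget $M_n=n^{o(1)}$ allows) in a sparse configuration model are \emph{not} whp joined by an edge, cheap or otherwise: with $\Theta(n)$ stubs the no-connection probability is $\exp(-\Theta(s_n^2/n))\to 1$. (Your earlier remark that the tree coupling survives up to $\Theta(n^{2/3})$ explored vertices is also false---collisions appear at scale $\sqrt{n}$.) The paper resolves this with two ingredients you are missing: (a) a quantitative coupling of the forward degrees to an \emph{i.i.d.}~$\vq$-sequence up to a \emph{polynomial} number of steps $n^{\rho}$ with $\rho=3/8>1/3$ (Lemma \ref{tv_bound}, Propositions \ref{coupling_forward_degrees} and \ref{coupling_SWG_BP}), so that each $\SWG$ boundary carries $\gg n^{1/3}$ positive stubs while the explosion time has essentially been attained; and (b) the two fronts are not connected to each other directly but are each routed through the largest zero-weight cluster $\cC_{(1)}$ of critical percolation on the unexplored remainder of the graph, which has $\Omega_{\pr}(n^{2/3})$ positive stubs by the critical-window results of \cite{dhara2017critical} (after checking that the modified degree sequence still satisfies their assumptions). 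Only then does a first-moment stub-pairing computation, $n^{3/8-\eta}\cdot n^{2/3}\gg n$, produce an $\ep$-cheap connection whp and hence $T_n(1,2)\le\theta^{(1)}_{\infty}+\theta^{(2)}_{\infty}+O(\ep)$. Without an intermediary of polynomial size of order at least $n^{2/3}$, or fronts of size $\gg\sqrt{n}$, the residual connection cost cannot be shown to be $o_{\pr}(1)$ by the expansion heuristic you propose.
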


\begin{rem}
Our proofs will show that $\Xi = V_1+ V_2$ where $V_i$ are independent copies of the explosion time for associated age-dependent branching process defined in Section \ref{sec:adbp}. Starting with \cite{janson1999one} studying first passage percolation for the complete graph, and continuing in a number of sparse random graph models see e.g.  \cites{BHH10b,BHH11,BHH17,Baroni,baroni2019tight,van2017explosion,van2015fixed,vdHSF} and the references therein,  this is part of a body of work showing similar distributional limits for the optimal passage time for first passage percolation on a host of random graph models.  
\end{rem}

\begin{cor}
\label{cor:examples}
Under the assumptions of Theorem \ref{thm:main}, consider the following special cases for the positive part of the edge distribution:
\begin{enumeratea}
\item \label{first} Suppose that $(t_e|t_e>0)$ has the exponential distribution with mean 1.
Then $T_n(1,2) \convd \Xi(\vp, {\mathrm{Exp}(1)})$ as $n\to\infty$. 
\item \label{gamma}
Suppose that $t_e$ has distribution function $F_{a}(\cdot)$ as in \eqref{eqn:Fa-def} for $a>0$. Then $T_n(1,2)\convd \Xi(\vp, {F_{a}})$  as $n\to\infty$. 
\item \label{Gbth} Suppose that $t_e$ has distribution function $G_{b}(\cdot)$ as in \eqref{eqn:Gb-def}   for $b> 0$. Then for any $b\in(0,\infty)$ the distribution of $T_n(1,2) \convd \Xi(\vp, {G_{b}})$ as $n\to\infty$. \end{enumeratea}

\end{cor}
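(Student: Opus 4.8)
The plan is to obtain all three parts as immediate consequences of Theorem~\ref{thm:main}. That theorem reduces the assertion ``$T_n(1,2)=O_{\pr}(1)$, and $T_n(1,2)\convd\Xi(\vp,\zeta)$'' to the single analytic condition that the positive part $F_\zeta$ of the edge weight be min-summable, i.e.\ that $\int_{1/\ep}^\infty F_\zeta^{-1}(e^{-u})\,\frac{du}{u}<\infty$ for some $\ep>0$. Hence the only work is to check this integral in each of the three cases. Since min-summability is governed by the integrand as $u\to\infty$, equivalently by $F_\zeta^{-1}$ near $0$, and so by $F_\zeta$ on a right-neighbourhood of the origin, the first step in each case is to read off $F_\zeta$ near $0$ from the decomposition \eqref{edg:decomp}, which gives $F_\zeta(t)=(F(t)-p_c)/(1-p_c)$ for $t>0$; the second step is to invert and estimate.

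For \eqref{first} one has $F_\zeta(t)=1-e^{-t}$, so $F_\zeta^{-1}(s)=\log\!\big(1/(1-s)\big)$ and $F_\zeta^{-1}(e^{-u})=\log\!\big(1/(1-e^{-u})\big)\sim e^{-u}$ as $u\to\infty$; thus $\int^\infty F_\zeta^{-1}(e^{-u})\,\frac{du}{u}<\infty$ and $F_\zeta$ is min-summable. For \eqref{gamma}, combining \eqref{eqn:Fa-def} with \eqref{edg:decomp} gives $F_\zeta(x)=x^a/(1-p_c)$ for all sufficiently small $x>0$, hence $F_\zeta^{-1}(e^{-u})=(1-p_c)^{1/a}\,e^{-u/a}$, which is integrable against $\frac{du}{u}$ at infinity for \emph{every} $a>0$. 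For \eqref{Gbth}, \eqref{eqn:Gb-def} gives $F_\zeta(x)=\exp(-1/x^b)/(1-p_c)$ near $0$; inverting, $F_\zeta^{-1}(e^{-u})=\big(u+\log(1/(1-p_c))\big)^{-1/b}\sim u^{-1/b}$, so the integrand is $\asymp u^{-1-1/b}$, integrable at infinity for \emph{every} $b>0$. In all three cases $F_\zeta$ is min-summable, so Theorem~\ref{thm:main} delivers the stated limits $T_n(1,2)\convd\Xi(\vp,\mathrm{Exp}(1))$, $\Xi(\vp,F_a)$, $\Xi(\vp,G_b)$ respectively.

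There is no genuine obstacle here; these computations are essentially the whole proof, the substantive content having been absorbed into Theorem~\ref{thm:main}. The one bookkeeping point to handle cleanly is that the ``$=1$'' branches making $F_a$ and $G_b$ bona fide cdfs in \eqref{eqn:Fa-def}--\eqref{eqn:Gb-def} play no role, min-summability being purely local at $0$; only the displayed forms of $F_\zeta$ on some interval $(0,\delta)$ are used. I would also highlight in the write-up why \eqref{Gbth} contrasts sharply with $\bZ^2$: the Damron--Lam--Wang criterion \eqref{damron-et-al} sums $F^{-1}(p_c+2^{-k})\asymp k^{-1/b}$, whose series diverges exactly for $b\ge1$, whereas the equivalent min-summability series $\sum_n F_\zeta^{-1}(e^{-1/\alpha^n})$ evaluates $F_\zeta^{-1}$ at arguments decaying \emph{doubly} exponentially, so for $G_b$ it is $\asymp\sum_n \alpha^{n/b}<\infty$ for all $b>0$. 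This is the precise mechanism behind the abstract's remark that passage times on random graphs stay bounded more easily.
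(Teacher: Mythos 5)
Your proposal is correct and takes essentially the same route as the paper: both reduce the corollary to the min-summability criterion of Theorem~\ref{thm:main} and verify it by computing $F_\zeta^{-1}(e^{-u})$ explicitly, obtaining $(1-p_c)^{1/a}e^{-u/a}$ for $F_a$ and $\bigl(u-\log(1-p_c)\bigr)^{-1/b}$ for $G_b$, exactly as in the paper (you additionally write out the routine exponential case (a), which the paper leaves implicit). Your closing comparison with the $\bZ^2$ criterion restates the discussion already contained in the remark following the corollary.
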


\begin{rem}
Part (a) contrasts with the case when $t_e= \mathrm{Exp}(1)$ on the configuration model  $\cG_n \sim \CM_n(\vp)$, where by \cite{BHH10b}, $T_n(1,2)  = \Theta_{\pr}(\log{n})$. We also compare parts (b) and (c) with the results for critical FPP on $\bZ^2$ from \cite{Damron}. In part (b), the conclusion is similar to \cite{Damron} which shows that $\rho(F_a) < \infty$ for all $0 < a < \infty$. However, part (c) contrasts with  \cite{Damron}, where $b < 1$ is required for the passage times to remain bounded, i.e., $\rho(G_b) <\infty$ iff $b<1$.
\end{rem}

Theorem \ref{thm:main} gives necessary and sufficient conditions for boundedness of the passage times. Thus a natural question is to find an explicit examples where typical passage times are {\bf not} bounded. The next result, which also follows from Theorem \ref{thm:main}, demonstrates such scenarios.

\begin{cor} \label{dexp}
Under the assumptions of Theorem \ref{thm:main}, consider the following special cases for the positive part of the edge distribution:
\begin{enumeratea}
\item Suppose  $(t_e|t_e>0)$ has cumulative distribution function $H_\gamma(t)=e \cdot \exp(-\exp(1/t^\gamma))$ for $0< t \leq 1$. Then as $n\to\infty$, 
\begin{enumeratei}
\item  For $\gamma < 1$, $ T_n(1,2) \convd \Xi $ where $\Xi = \Xi(\vp, H_\gamma) < \infty$ a.s., 
\item  For $\gamma \geq 1$, $T_n(1,2)\convd \infty$.
\end{enumeratei}
\item Suppose $t_e\overset{d}{=}p_c \delta_0+(1-p_c)\delta_1$, namely the passage time distribution has mass at either zero or one. Then $T_n(1,2)\convd \infty$.

\end{enumeratea}
\end{cor}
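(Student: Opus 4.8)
The plan is to read off both statements from Theorem~\ref{thm:main}: once the positive part $F_\zeta$ is identified in each case, everything reduces to deciding whether the min-summability integral \eqref{PLexp} is finite (only the behaviour of $F_\zeta$ near $0$ matters), and --- in the cases where it is not --- to upgrading ``$T_n(1,2)$ is not $O_{\pr}(1)$'' to the stronger conclusion ``$T_n(1,2)\convd\infty$''.

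For part (a) we have $F_\zeta = H_\gamma$, which I would invert near $0$: solving $e\exp(-\exp(1/t^\gamma)) = e^{-u}$ gives $\exp(1/t^\gamma) = 1+u$, hence
\[
F_\zeta^{-1}(e^{-u}) = \big(\log(1+u)\big)^{-1/\gamma}
\]
for all large $u$. Substituting this into \eqref{PLexp} and changing variables $v=\log(1+u)$ (so $du/u\sim dv$ as $u\to\infty$) reduces the finiteness of $\int_{1/\ep}^\infty \big(\log(1+u)\big)^{-1/\gamma}\,du/u$ to that of $\int^\infty v^{-1/\gamma}\,dv$, which holds iff $1/\gamma>1$. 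Thus $F_\zeta$ is min-summable exactly when $\gamma<1$; Theorem~\ref{thm:main} then gives (i), while for $\gamma\geq1$ it gives that $T_n(1,2)$ is not tight. For part (b), $F_\zeta = \delta_1$, so $F_\zeta^{-1}(s)=1$ for every $s\in(0,1]$ and $\int_{1/\ep}^\infty F_\zeta^{-1}(e^{-u})\,du/u = \int_{1/\ep}^\infty du/u = \infty$; again $F_\zeta$ is not min-summable.

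It remains to show that failure of min-summability forces $T_n(1,2)\convd\infty$ rather than merely non-tightness, which is the one nontrivial point. For this I would return to the branching-process representation underlying the proof of Theorem~\ref{thm:main} (cf.\ the Remark following it): up to the moment the explorations from vertices $1$ and $2$ meet, the weighted neighbourhood of $1$ in $\cG_n$ is well approximated by two independent age-dependent branching processes with edge-length law $F$, and by the min-summability criterion for such processes (Theorem~5.7 of \cite{Kom}) the failure of \eqref{PLexp} means these processes are non-explosive, i.e.\ their explosion times equal $+\infty$ almost surely. Concretely, for each fixed $M$ I would argue that: (a) whp every path in $\cG_n$ from $1$ to $2$ of passage time $\le M$ leaves every fixed-radius neighbourhood of $1$; (b) in the branching-process approximation the minimal passage time needed to reach generation $r$ exceeds $M$ with probability tending to $1$ as $r\to\infty$, which is exactly non-explosiveness; and (c) these two facts combine, via the same local-weak-convergence and exploration estimates that prove the ``only if'' half of Theorem~\ref{thm:main}, to give $\pr(T_n(1,2)\le M)\to 0$ for every $M$. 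I expect step (c) --- converting the absence of explosion in the limit into genuine divergence of the finite-$n$ passage time --- to be the main obstacle; the remainder is the elementary integral bookkeeping above.
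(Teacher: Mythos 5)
Your proposal is correct and follows the paper's own route: the paper's proof of Corollary \ref{dexp} is exactly the inversion $H_\gamma^{-1}(e^{-u})=(\log(1+u))^{-1/\gamma}$ followed by the integral test in \eqref{PLexp} (with the same trivial computation in case (b), where $F_\zeta=\delta_1$ gives $F_\zeta^{-1}(e^{-u})\equiv 1$ and the integral diverges), and then an appeal to Theorem \ref{thm:main}. The step you single out as the main obstacle --- upgrading failure of tightness to $T_n(1,2)\convd\infty$ --- requires no new argument: it is already contained in the machinery behind Theorem \ref{thm:main}, namely the Corollary of Proposition \ref{prop:lwb}, part (a), which states that if the coupled age-dependent branching process does not explode (equivalently, by plumpness from Lemma \ref{cluster_degree}, if min-summability fails) then $T_n(u,v)\convp\infty$.
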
 
\begin{conj}\label{lb_passage_time}
In the setting of Corollary \ref{dexp} (a) part (ii), we conjecture that there exists a constant $c>0$ such that with high probability $T_n(1,2) \geq c (\log\log n)^{1-(1/\gamma)}$. 
\end{conj}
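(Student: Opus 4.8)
\medskip
\noindent\textbf{Towards Conjecture~\ref{lb_passage_time}.}
We may assume $\gamma>1$, since for $\gamma=1$ one has $(\log\log n)^{1-1/\gamma}\equiv 1$ and the claim is immediate from $T_n(1,2)\convd\infty$ in Corollary~\ref{dexp}. The plan is to prove the stronger statement that, for a sufficiently small constant $c=c(\vp,\gamma)>0$ and $n$ large, the passage-time ball $B_1(t):=\{v\in[n]:T_n(1,v)\le t\}$ satisfies $|B_1(t)|\le n^{o(1)}$ with high probability at radius $t=c(\log\log n)^{1-1/\gamma}$, and likewise $|B_2(t)|$ and $|B_1(t+1)|$. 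Granting this, the conjecture follows by the usual two-source exploration/sprinkling argument: since $\zeta\sim H_\gamma$ is supported on $(0,1]$, any path from $1$ to $2$ of total weight at most $2t$ must contain a vertex lying in $B_1(t+1)\cap B_2(t)$ (take the last vertex whose initial segment has weight $\le t$, together with its successor); but having explored $B_1(t+1)$, which reveals only $n^{o(1)}$ vertices, growing $B_2(t)$ from the independent uniform vertex $2$ reveals $n^{o(1)}$ further vertices, each hitting the previously explored set with probability $n^{o(1)}/(n-o(n))$, so whp $B_1(t+1)\cap B_2(t)=\eset$ and hence $T_n(1,2)>2t$; relabelling $c$ finishes the proof.

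To bound $|B_1(t)|$, I would contract every zero-weight cluster of $\cG_n$ to a single vertex, obtaining a weighted multigraph whose ``super-vertices'' carry the positive edges with i.i.d.\ $H_\gamma$-weights. Since $\pr(t_e=0)=1/\nu=p_c$, a zero-weight cluster is exactly a critical percolation cluster on $\CM_n(\vp)$, so under $\E[D^{3+\eta}]<\infty$ the critical-window estimates \cites{joseph,dhara2017critical} give $\pr(|\cC(v)|\ge m)\asymp m^{-1/2}$ for $1\ll m\ll n^{2/3}$ and $|\cC_{\sss(1)}|\le An^{2/3}$ whp; in particular a super-vertex reached along a positive edge has $\Theta(|\cC|)$ incident positive edges, i.e.\ an infinite-mean, tail-index-$\tfrac12$ ``super-degree'' truncated at scale $n^{2/3}$. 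Running Dijkstra from vertex $1$ on $\cG_n$ amounts to exploring this multigraph by generations $g$ (number of positive edges traversed), and up to the stage where $\sqrt n$ vertices have been seen this couples with an age-dependent branching process $\cB$ whose individuals are i.i.d.\ critical clusters, a cluster of size $|\cC|$ producing $\Theta(|\cC|)$ offspring with birth times displaced from its own by independent copies of $\zeta$. Writing $V_g$ for the number of vertices in generations $\le g$ of $\cB$ and $g^\star=g^\star(t)$ for the largest generation reached before time $t$, on the coupling event $|B_1(t)|\le V_{g^\star}$.

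The heart of the matter is a two-sided control of $\cB$. \emph{(i) Growth.} Whp, uniformly over $g\le C\log\log n$, $\log V_g\le C'3^{\,g}\log\log n$: at step $g$ the $N:=\Theta(V_{g-1})$ newly examined clusters satisfy $\max_i|\cC_i|\le N^2\log^2 n$ whp (first moment $N\cdot(N^2\log^2 n)^{-1/2}=1/\log n$), whence $V_g\le N\cdot N^2\log^2 n$, and iterating $\log V_g\le 3\log V_{g-1}+O(\log\log n)$ from $\log V_0=O(\log\log n)$ gives the bound; a plain first-moment estimate for $V_g$ is useless because $\E V_g=\infty$. \emph{(ii) Displacement.} The $i$-th positive edge used to reach generation $i$ is the cheapest among the $\Theta(V_{i-1})$ frontier edges, so whp its weight is at least $F_\zeta^{-1}\!\big(\delta/\Theta(V_{i-1})\big)$ for a small constant $\delta$; since $F_\zeta^{-1}(u)\asymp(\log\log(1/u))^{-1/\gamma}$ for $H_\gamma$ and $\log V_{i-1}\asymp 3^{\,i}\log\log n$, this is of order $i^{-1/\gamma}$ — it is precisely the \emph{double} logarithm in $F_\zeta^{-1}$, fed by the doubly-exponential growth of $V_{i-1}$, that produces this polynomial rate. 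Summing, the time to reach generation $g$ is at least of order $\sum_{i\le g}i^{-1/\gamma}\asymp g^{1-1/\gamma}$ (here $\gamma>1$ is used), so $g^\star(t)\le C''t^{\gamma/(\gamma-1)}$. Combining (i) and (ii), $\log|B_1(t)|\le C'3^{\,C''t^{\gamma/(\gamma-1)}}\log\log n$; substituting $t=c(\log\log n)^{1-1/\gamma}$, so that $t^{\gamma/(\gamma-1)}=c^{\gamma/(\gamma-1)}\log\log n$, turns the exponent into $3^{\,C''c^{\gamma/(\gamma-1)}\log\log n}=(\log n)^{\theta(c)}$ with $\theta(c)\to0$ as $c\to0$; for $c$ small this makes $\log|B_1(t)|\le(\log n)^{1/2}\log\log n=o(\log n)$, i.e.\ $|B_1(t)|\le n^{o(1)}$, and the same bound holds at radius $t+1$ and for vertex $2$.

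The main obstacle is the heavy-tailed exploration behind step (i): because the cluster-size law has infinite mean (tail index $\tfrac12$), every naive first- or second-moment computation for $|B_1(t)|$ diverges, being dominated by the rare early encounter with a near-maximal ($\asymp n^{2/3}$) cluster; this forces the $\max$-truncation and iterated-cube recursion above, after which one must propagate the resulting high-probability estimates — together with the matching per-step lower bound of order $i^{-1/\gamma}$ — uniformly across all $\Theta(\log\log n)$ generations, with constants tracked finely enough that $c$ is an absolute constant. A secondary technical point is making the contraction and the branching-process coupling rigorous, since the contracted multigraph is not literally a configuration model (adjacent clusters share positive edges, and super-degrees are correlated with the local geometry); this should be routine given the cluster-level local tree-likeness of $\CM_n(\vp)$. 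Alternatively, once the reduction to the contracted graph is in place, one might try to invoke existing first-passage percolation results for configuration models with infinite-variance degrees \cites{Baroni,baroni2019tight} and extract the rate $(\log\log n)^{1-1/\gamma}$ for the particular weight law $H_\gamma$.
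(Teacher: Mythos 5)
Your proposal follows essentially the same strategy as the paper's own (explicitly non-rigorous) proof outline for this conjecture: collapse the zero-weight clusters to obtain tail-index-$\tfrac12$ offspring, show the generation sizes of the collapsed exploration grow doubly exponentially, lower-bound each generational crossing by the minimum of the available $H_\gamma$-weights --- whose double-logarithmic inverse gives per-step cost $\asymp m^{-1/\gamma}$ --- and sum over $L\asymp \log\log n$ generations; your max-truncation recursion merely replaces the paper's appeal to the Davies limit theorem \eqref{BP_convergence}, and your ball-size/sprinkling reduction replaces the paper's coupling of two SWGs grown to size $O(n^{\rho})$, neither of which changes the mechanism. The one place your sketch is genuinely weaker is the displacement step: taking the threshold $F_\zeta^{-1}\bigl(\delta/\Theta(V_{i-1})\bigr)$ with a fixed $\delta$ holds only with probability $1-\delta$ per generation, which does not survive a union bound over $\Theta(\log\log n)$ generations; this should be repaired as in the paper by choosing thresholds with summable failure probabilities (there, $F^*(s_m)\approx e^{-(2+K)2^m}$ against $a_m=e^{(1+K)2^m}$ edges, giving failure probability $e^{-2^m}$), which costs only a constant factor since only $\log\log V_{i-1}$ enters the rate. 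Bear in mind the statement is a conjecture: the paper supplies only this strategy, so, like the paper, you would still need to make the cluster-contraction and branching-process couplings rigorous before either argument constitutes a proof.
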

We discuss our reasons for believing this conjecture in Section \ref{sec:pf-corr}.

\begin{conj}
\label{conj:zero-one}
	Example (b) in the above Corollary \ref{dexp} is closely connected to a question studied in \cite[Theorem 1.8]{Baroni}. In this setting we conjecture, 
$$
\frac{T_n(1,2)}{\log\log n} \probc \frac{2}{\log(2)}.
$$
\end{conj}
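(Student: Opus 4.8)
\emph{Proof sketch.} The plan is to recast the statement as an ultra-small-world distance computation in a derived graph. Since the only edge weights are $0$ and $1$, $T_n(1,2)$ is exactly the minimal number of weight-one edges on a path from $1$ to $2$; equivalently, contract every weight-zero edge of $\cG_n$ and let $\widetilde\cG_n$ be the resulting multigraph, whose vertices are the zero-weight clusters $\cC_0(v)$ and whose edges are the weight-one edges of $\cG_n$. Then $T_n(1,2)$ equals the graph distance in $\widetilde\cG_n$ between $\cC_0(1)$ and $\cC_0(2)$, and by (A1) both $1$ and $2$ lie in the giant whp so this is finite. Under (A1)--(A3) each zero-weight cluster is a \emph{critical} branching-process cluster: following a kept (weight-zero) half-edge leads to a size-biased vertex with on average $\nu$ further half-edges, each kept with probability $p_c=1/\nu$, so the mean offspring is $1$. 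Since $\E[D^{3+\eta}]<\infty$ forces this offspring law to have finite variance, the total progeny of a typical cluster satisfies $\pr(|\cC_0(v)|\ge x)=\Theta(x^{-1/2})$ for $1\le x\lesssim n^{2/3}$, with the largest cluster of order $n^{2/3}$. As the degree of a node of $\widetilde\cG_n$ is, up to bounded factors, the size of the corresponding cluster, $\widetilde\cG_n$ behaves like a configuration model with degree-tail exponent $\tau-1=3/2$, i.e.\ $\tau=5/2\in(2,3)$ --- the regime in which typical distances are $\tfrac{2\log\log n}{|\log(\tau-2)|}(1+o_{\pr}(1))=\tfrac{2}{\log 2}\log\log n\,(1+o_{\pr}(1))$. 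This is the mechanism behind the conjectured constant, and the connection with \cite[Theorem 1.8]{Baroni} is precisely at the level of this effective exponent $5/2$.

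To make this rigorous I would argue directly with the passage-time balls $B_k(v)=\{u:T_n(u,v)\le k\}$. Such a ball is a union of zero-clusters, closed under weight-zero edges, and its boundary consists of $\Theta(|B_k(v)|)$ weight-one half-edges; in the tree regime $|B_k(v)|=o(\sqrt n)$ each of these leads to a fresh vertex whose zero-cluster is, conditionally, an independent critical cluster. Because the union of $m$ i.i.d.\ critical clusters has $\Theta(m^2)$ vertices --- dominated by its maximum, of order $m^2$ by the $x^{-1/2}$ tail --- one gets that $|B_{k+1}(v)|$ is of order $|B_k(v)|^2$ (up to a slowly growing factor) after an $O_{\pr}(1)$ burn-in, hence $\log |B_k(v)|=2^{k}\,\Xi_v\,(1+o(1))$ for a strictly positive random $\Xi_v$. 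For the upper bound, grow the explorations from $1$ and from $2$ for $k^\star=(1+o_{\pr}(1))\log_2\log n$ steps, so that each reaches --- with one final step past the $\sqrt n$ scale --- a zero-cluster of size of order $n^{2/3}$, i.e.\ a node of the ``core'' of $\widetilde\cG_n$. Two such clusters carry $\Theta(n^{2/3})$ weight-one half-edges each, so the expected number of weight-one edges between them is $\Theta(n^{1/3})\to\infty$, and a second-moment argument shows they are joined (or coincide) whp. Hence $T_n(1,2)\le 2k^\star+O_{\pr}(1)=\tfrac{2}{\log 2}\log\log n\,(1+o_{\pr}(1))$.

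For the lower bound the key input is a matching one-sided, uniform bound $|B_k(v)|\le \exp(C_n 2^{k})$ whp for all $k\le\log_2\log n$, with $C_n=O(\log\log\log n)$; one proves it by iterating the tail estimate $\pr\!\big(|B_{k+1}(v)|\ge K|B_k(v)|^2\,\big|\,\cF_k\big)\le C/\sqrt K$ --- itself a consequence of $\pr(\sum_{i\le m}Z_i\ge s)\le Cm/\sqrt s$ for i.i.d.\ cluster sizes with an $x^{-1/2}$ tail --- with $K=K_n\to\infty$ slowly, and a union bound over the $O(\log\log n)$ steps. Granting this, any path from $1$ to $2$ with $\ell$ weight-one edges visits clusters $\cD_0=\cC_0(1),\dots,\cD_\ell=\cC_0(2)$ with $|\cD_i|\le\exp\!\big(C_n\min(2^{i},2^{\ell-i})\big)\le\exp(C_n 2^{\ell/2})$, so a path-counting first moment restricted to clusters obeying this constraint, using $\sum_{|\cC|\le M}|\cC|^2=\Theta(nM^{1/2})$ and $\pr(\cC\sim\cC')\asymp|\cC|\,|\cC'|/n$ in the configuration model, gives
\[
\pr\big(T_n(1,2)\le \ell\big)\ \le\ n^{-1+o(1)}\exp\!\big(O(C_n\,2^{\ell/2})\big)\ \To\ 0
\]
whenever $2^{\ell/2}=o(\log n)$, i.e.\ for every $\ell\le(1-\eps)\tfrac{2}{\log 2}\log\log n$; in particular this also rules out reaching a constant fraction of $\cG_n$ --- hence a typical vertex $2$ --- from $1$ in fewer steps, since $\exp(C_n 2^{k})=o(n)$ there. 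Combining the two bounds yields $T_n(1,2)/\log\log n\probc 2/\log 2$.

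The main obstacle is the uniform ball-growth bound $|B_k(v)|\le\exp(C_n2^{k})$: the true recursion reads $|B_{k+1}(v)|\approx \Sigma_k\,|B_k(v)|^2$ with $\Sigma_k$ a heavy-tailed (one-sided $\tfrac12$-stable) multiplier, so one must show that the accumulated product $\prod_{j\le k}\Sigma_j^{\,2^{k-j}}$ over the $\approx\log_2\log n$ relevant steps does not corrupt the constant in the exponent --- this requires quantitative control on the extreme upper tail of the critical cluster sizes actually encountered (the $n^{2/3}$ truncation together with large-deviation estimates for critical branching-process progeny), plus an honest treatment of the breakdown of the tree approximation near the $\sqrt n$ scale (used both for the ``last step to the core'' in the upper bound and for the independence of the two explorations). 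A secondary technical point is that $\widetilde\cG_n$ is not literally a configuration model --- the cluster sizes are weakly dependent through the shared pool of half-edges, and one must keep track of which original vertex lies in which cluster --- so the $\tau\in(2,3)$ distance asymptotics (and \cite[Theorem 1.8]{Baroni}) have to be re-derived in this exploration-based setting rather than quoted verbatim.
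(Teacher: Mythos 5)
You are attempting to prove a statement that the paper itself only states as a conjecture: there is no proof of Conjecture \ref{conj:zero-one} in the paper, only the heuristic given in Section \ref{sec:disc} --- collapsing zero-weight clusters yields an effective degree exponent $\tau=5/2$, and \cite[Theorem 1.8]{Baroni} for $\tau\in(2,3)$ with weights bounded below gives the constant $2/|\log(\tau-2)|=2/\log 2$. Your sketch is essentially a proposed formalization of that same heuristic (doubly-logarithmic distances via doubling of $\log|B_k|$, matching lower bound by path counting), so the strategy is aligned with the authors' intuition; but as written it is not a proof, and the places where it is incomplete are exactly the places the authors presumably could not close either. The crux is your uniform ball-growth bound $|B_k(v)|\le\exp(C_n2^k)$ for all $k\lesssim\log_2\log n$ with $C_n=O(\log\log\log n)$: the one-step recursion has a one-sided $\tfrac12$-stable multiplier (cf.\ Lemmas \ref{size_cluster} and \ref{cluster_degree} and the stable-law normalization $\ell^2$ used in Proposition \ref{coupling_SWG_BP}), and your claim that the accumulated product $\prod_{j\le k}\Sigma_j^{2^{k-j}}$ only contributes a $\log\log\log n$ factor to the exponent is asserted, not proved; since the constant $2/\log 2$ is exactly what is at stake, any uncontrolled polylogarithmic-in-$|B_k|$ correction per step would destroy the lower bound. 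Iterating the conditional tail $\pr(|B_{k+1}|\ge K|B_k|^2\mid\cF_k)\le C/\sqrt K$ with $K=K_n\to\infty$ over $\Theta(\log\log n)$ steps costs a factor $K_n^{2^{k-j}}$ compounded in the same way, so this needs a genuinely quantitative argument (truncation at $n^{2/3}$ plus sharp upper-tail estimates along the exploration), not a union bound.

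Two further gaps you flag but do not resolve are also substantive rather than routine. First, the collapsed graph $\widetilde\cG_n$ is not a configuration model: cluster sizes and their positive-stub counts are dependent through the shared half-edge pool and through the conditioning inherent in the sequential exploration, so neither \cite[Theorem 1.8]{Baroni} nor standard $\tau\in(2,3)$ distance asymptotics can be invoked; the paper's own couplings (Propositions \ref{couple_iid}--\ref{coupling_SWG_BP}) are only proved up to size $n^{\rho}$ with $\rho<\tfrac{\tau-2}{2\tau-3}<\tfrac12$, well short of the scales your argument needs. Second, the upper bound's ``last step past the $\sqrt n$ scale'' into an $n^{2/3}$-sized cluster is precisely where the branching-process approximation fails; the paper handles the analogous step (Section \ref{Step2} and Step 3) by a careful re-randomization of unexplored stubs and the critical-window results of \cite{dhara2017critical}, and your sketch would need an analogue of that construction, including the verification that the residual degree sequence stays in the critical window, plus a second-moment argument that also rules out the two explorations merging prematurely. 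In short: the roadmap is reasonable and consistent with the conjectured constant, but the heavy-tail accumulation estimate, the non-configuration-model dependence, and the crossover to the $n^{2/3}$ core are genuine missing pieces, so the statement remains open as claimed in the paper.
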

\noindent We will discuss this Conjecture in Section \ref{sec:disc}, below \eqref{baroni}.
 
Table \ref{tab:compare} below summarizes settings in which the passage time remains bounded on $\ZZ^2$ compared to the random graphs in this paper, satisfying (A1)--(A3), for different edge-weight distributions in the critical regime of zero-weight edges.

\begin{table}[h]
    \centering
    \begin{tabular}{>{\raggedright\arraybackslash}p{4cm} >{\centering\arraybackslash}p{3cm} >{\centering\arraybackslash}p{4cm}}
        \toprule
        \textbf{$(t_e|t_e>0)$} & \textbf{$\ZZ^2$} & \textbf{Config. Model} \\
        \midrule
        $(t \wedge 1)^a$ & always & always \\
        $\exp(-1/t^b)$ &  $b<1$ & always \\
        $e\cdot \exp(-\exp(1/t^\gamma))$  & never & $\gamma < 1$ \\
        \bottomrule
    \end{tabular}
    \caption{Conditions for bounded passage times on $\bZ^2$ versus $\CM$}
    \label{tab:compare}
\end{table}

\subsection{Flooding time behavior}
Another interesting question, in scenarios where passage times are generally bounded, involves investigating the flooding time --- the time it takes for information originating from a fixed vertex, say $u=1$, to reach all other vertices. 
Unlike typical passage times, the flooding time is susceptible to the influence of ``extremal'' structures in the graph $\mathcal{G}_n$, such as the long ``strands" (or isolated paths) of length $\Theta(\log n)$. 

We will use the phrase \emph{isolated path} to refer to a path consisting of vertices of degree 2. The length of an isolated path is the number of vertices contained in this path. We first state a known result on existence of long isolated paths, paraphrasing \cite[Theorem 5.14]{fernholz2007diameter} to the setting in this paper. 
\begin{prop}\label{isolated_path}
Under Assumptions \ref{ass}, for any $\ep>0$, with high probability there exists an isolated path in $\mathcal{G}_n\sim \CM_n(\vp)$ of length at least $\left(\frac{1}{-2\log q_1}-\ep\right)\log n$, where $q_1$ is as defined in \eqref{size_biased}.
\end{prop}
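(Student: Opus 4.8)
The plan is to establish the existence of a long isolated path in $\cG_n \sim \CM_n(\vp)$ by a second moment argument on the number of such paths, exactly as in the proof of \cite[Theorem 5.14]{fernholz2007diameter}, adapted to the generality of Assumptions \ref{ass}. First I would set up the exploration/counting of potential isolated paths: fix a target length $\ell = \ell(n) = \left(\tfrac{1}{-2\log q_1} - \ep\right)\log n$ and let $N_\ell$ denote the number of isolated paths of length (at least) $\ell$ in $\cG_n$. The key heuristic is that, when exploring the configuration model by pairing stubs one at a time, extending a degree-$2$ vertex into another degree-$2$ vertex happens with probability asymptotically $q_1 = 2p_2/\sum_j j p_j$ at each step (the chance the next stub we pair to belongs to a degree-$2$ vertex, times a factor for the second stub of that vertex still being free), so the chance that a given starting point begins an isolated run of length $\ell$ is of order $q_1^{\,\ell}$ up to lower-order corrections. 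Since there are $\Theta(n)$ vertices of degree $2$ (here (A1) guarantees $p_2>0$ when $p_c = 1/\nu < 1$, hence $q_1 > 0$), one gets $\E[N_\ell] = \Theta(n \cdot q_1^{\,\ell}(1+o(1)))$, which tends to infinity precisely because $\ell < \tfrac{1}{-\log q_1}\log n = \tfrac{\log n}{\log(1/q_1)}$; the extra factor of $2$ in the denominator of the stated bound is the slack one builds in so that the second moment computation goes through.

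Second, I would carry out a second moment estimate: bound $\E[N_\ell^2]$ by summing over ordered pairs of candidate isolated paths, splitting into the cases where the two paths are vertex-disjoint (the dominant contribution, which factorizes and gives $(\E[N_\ell])^2(1+o(1))$, using the near-independence of disjoint stub-pairings in the configuration model under the moment bound $\E[D^{3+\eta}]<\infty$) and where they share stubs or vertices (a negligible contribution, controlled by the fact that shared structure forces additional constraints each costing a factor bounded away from $1$, and by the bounded $(3+\eta)$-th moment ruling out pathological high-degree vertices). Then Paley--Zygmund / Chebyshev gives $\pr(N_\ell \geq 1) \to 1$, i.e., whp there is an isolated path of length at least $\ell$. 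One technical point worth spelling out: "length of an isolated path" is defined here as the number of degree-$2$ vertices it contains, so I would make sure the counting matches this convention and that the endpoints of the run (which may attach to higher-degree vertices) are handled consistently — this only shifts $\ell$ by an additive constant and does not affect the leading-order constant $\tfrac{1}{-2\log q_1}$.

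The main obstacle is not the heuristic but making the stub-pairing probabilities rigorous with uniform control as $n \to \infty$: the probability that a freshly exposed stub is matched to one belonging to a still-"free" degree-$2$ vertex is $q_1$ only asymptotically and only while $o(n)$ stubs have been used, so I must verify that exposing an isolated path of length $\Theta(\log n)$ consumes only a vanishing fraction of the $\Theta(n)$ stubs, keeping the conditional one-step probabilities within $(1\pm o(1))q_1$ throughout; accumulating these $\ell = \Theta(\log n)$ multiplicative errors must still leave $\E[N_\ell] \to \infty$, which is exactly why the bound is stated with the factor $1/(-2\log q_1)$ rather than the "true" threshold $1/(-\log q_1)$. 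Since, however, this is precisely the content of \cite[Theorem 5.14]{fernholz2007diameter}, the cleanest route is to quote that theorem and merely check that Assumptions (A1)--(A2) imply its hypotheses — in particular that $p_2 > 0$ (so $q_1 \in (0,1)$ and the claimed length is positive) and that the degree moment condition $\E[D^{3+\eta}] < \infty$ is at least as strong as whatever moment assumption \cite{fernholz2007diameter} requires — and then restate the conclusion in the present notation. I would therefore present the proof as a short reduction to \cite[Theorem 5.14]{fernholz2007diameter} together with this hypothesis check, rather than reproducing the full second moment computation.
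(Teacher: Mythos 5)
Your recommended route---quoting \cite[Theorem 5.14]{fernholz2007diameter} and checking that Assumptions (A1)--(A2) cover its hypotheses---is exactly what the paper does: Proposition \ref{isolated_path} is stated as a paraphrase of that theorem with no independent proof given, so your second-moment sketch is optional extra detail rather than a divergence. One small caution: (A1) does not force $p_2>0$ (e.g.\ $\pr(D=3)=1$ is allowed), so the case $q_1=0$ should be treated as trivial (the claimed length is then negative and the statement vacuous) rather than asserted to follow from $p_c<1$; likewise, the factor $2$ in $-2\log q_1$ reflects the form of the cited result (the first-moment threshold for the longest isolated path is in fact $\log n/(-\log q_1)$), not slack required for the second-moment computation.
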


Proposition \ref{isolated_path} leads to our final result, which shows that uniform boundedness of $T_n(1,v)$ as $v$ ranges over $[n]$ is not possible even if typical distances are bounded.

\begin{thm}
\label{thm:extremal}
Suppose Assumptions \ref{ass} are satisfied and $\E[t_e] \in (0,\infty)$. There exists $c>0$ so that, 
$$
\pr( \max_{v\in [n]} T_n(1,v) \ge c\log n ) \to 1.
$$
\end{thm}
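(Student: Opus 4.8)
The plan is to exploit Proposition \ref{isolated_path} to locate a long isolated path, and then argue that any vertex sitting in the ``middle'' of such a path is at graph distance $\Omega(\log n)$ from vertex $1$ \emph{along every path}, which forces the passage time to be large because $\E[t_e]>0$ makes each edge cost a positive amount in expectation. Concretely, fix $\ep>0$ small and let $\mathcal{I}_n$ be an isolated path of length $L_n \ge \big(\tfrac{1}{-2\log q_1}-\ep\big)\log n$, which exists whp by Proposition \ref{isolated_path}. Let $w$ be a vertex at the midpoint of $\mathcal{I}_n$. Since $\mathcal{I}_n$ consists entirely of degree-$2$ vertices, every self-avoiding path from $1$ to $w$ must enter $\mathcal{I}_n$ through one of its two endpoints and then traverse at least $\lfloor L_n/2\rfloor$ edges of $\mathcal{I}_n$ to reach $w$; hence $H_n(1,w)$ and indeed the length of \emph{every} path from $1$ to $w$ is at least $\lfloor L_n/2\rfloor \ge c_1\log n$ for a suitable $c_1>0$.

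Next I would convert this deterministic length lower bound into a lower bound on $T_n(1,w) = \min_{\mvpi}\sum_{e\in\mvpi}t_e$. The subtlety is that the minimum is over infinitely (well, exponentially) many paths, and a priori some long path could have an atypically small total weight; moreover the weights on the optimal path are not independent of the choice of that path. The clean way around this is a truncation-and-union-bound argument restricted to the half of the path inside $\mathcal{I}_n$: the $m:=\lfloor L_n/2\rfloor$ edges of $\mathcal{I}_n$ lying between $w$ and (say) the far endpoint form a \emph{fixed} set of edges once the graph is revealed, their weights are i.i.d.\ copies of $t_e$ independent of everything else, and \emph{every} path from $1$ to $w$ that uses this endpoint contains all $m$ of them. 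Since $\E[t_e]=:\mu>0$, a one-sided large deviation (Cramér) bound gives $\pr\big(\sum_{i=1}^m t_{e_i} < \tfrac{\mu}{2}m\big) \le e^{-c_2 m}$ for some $c_2>0$; the same holds for the $m$ edges between $w$ and the near endpoint. Taking a union bound over the (at most two) endpoints, whp every path from $1$ to $w$ has total weight at least $\tfrac{\mu}{2}m \ge c\log n$ with $c = \tfrac{\mu}{4}\big(\tfrac{1}{-2\log q_1}-\ep\big)$; since $\max_{v\in[n]}T_n(1,v)\ge T_n(1,w)$, this proves the theorem. One should condition on the (whp) event from Proposition \ref{isolated_path} and handle the low-probability complement trivially by noting $\max_v T_n(1,v)\ge 0$; the large-deviation estimate is uniform because it only depends on $\mu$ and the common law of $t_e$.

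One technical point to nail down is the independence claim: Proposition \ref{isolated_path} is an existence statement, so I should first expose the graph $\mathcal{G}_n$ and the event that a long isolated path exists, \emph{then} reveal the edge weights, which are i.i.d.\ and independent of $\mathcal{G}_n$; conditionally on $\mathcal{G}_n$ and on the identity of $\mathcal{I}_n$ (chosen by some measurable selection rule), the weights on $\mathcal{I}_n$ remain i.i.d.\ with law $F$, so Cramér applies directly. A mild nuisance is that $\pr(t_e < \mu/2)$ could be $0$ or the left tail degenerate, but Cramér's bound is vacuously fine in those cases (the probability is then $0$), and if $t_e$ is bounded below by a positive constant the conclusion is even easier; in all cases $c_2>0$ can be taken as the Legendre transform of the log-moment-generating function of $t_e$ at $\mu/2$, which is strictly positive since $\mu/2 < \mu = \E[t_e]$.

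The main obstacle, and the only place real care is needed, is making rigorous that ``every path from $1$ to $w$ must traverse $m$ specific edges'' — i.e.\ that the midpoint of an isolated path is a genuine bottleneck. This is where the definition of isolated path (all internal vertices have degree exactly $2$) does the work: a degree-$2$ vertex on a self-avoiding path has its two incident edges forced, so inductively walking out from $w$ along $\mathcal{I}_n$, a self-avoiding path has no choice but to follow the strand until it exits at an endpoint. Everything else is a routine union bound plus a standard large-deviation estimate, so I expect the write-up to be short.
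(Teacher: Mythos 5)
Your proposal is correct and follows essentially the same route as the paper: locate a long isolated path via Proposition \ref{isolated_path}, use its midpoint as a bottleneck so that any path from vertex $1$ must traverse $\Theta(\log n)$ specific edges of the strand, and lower bound the resulting sum of i.i.d.\ weights by $\tfrac{\mu}{2}\cdot\Theta(\log n)$. The only differences are cosmetic refinements (a Chernoff/Cramér bound with a union bound over the two halves of the strand, rather than the paper's law-of-large-numbers step), which if anything make the stochastic-domination claim in the paper's proof more explicit.
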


\section{Discussion and related work}
\label{sec:disc}

\subsection{Proof techniques and overview: explosiveness of branching processes}
We start by giving some intuition to explain our results, setting the stage for a clearer path to the rest of this section, as well as the proofs. For now, we postpone rigorous statements, leaving precise details to Section \ref{sec:preliminary} and subsequent sections.

To understand intuitively the effect of having zero-weight edges in the FPP on the configuration model $\CM_n(\vp)$, we perform a two-stage construction:
\begin{enumeratea}
\item Given a graph $\mathcal{G}_n$ constructed from $\CM_n(\vp)$, each edge is independently retained with probability $p_c=1/\nu$ (otherwise it is deleted).
The resulting graph will be denoted by $\CM_n(\vp,p_c)$.  In the large network limit, the {\bf number of vertices} in a typical zero-weight cluster  follows a power-law distribution with tail exponent $\tau = 5/2$ (Lemma \ref{size_cluster}). The edges within such a cluster correspond solely to the zero-weight edges, where as the deleted edges (originally present in $\CM_n(\vp)$) correspond to positive-weight edges, where the edge weights are drawn independently from $\zeta \sim F_\zeta$ as in \eqref{edg:decomp}.
\item Collapse every zero-weight clusters into a supervertex whose ``degree" in the collapsed graph is given by the number of positive-weights edges emanating from it. The number of these positive-weight edges also follows a power-law distribution with the same exponent $5/2$ (Lemma \ref{cluster_degree}). 

\end{enumeratea}

Thus conceptually, \emph{critical} first passage percolation on the original graph should have similiar behavior to first passage percolation with strictly positive edge-weights with distribution $F_\zeta$, on a related configuration model with a heavy tailed degree distribution with degree exponent $\tau = 5/2 \in (2,3)$. In the approximation model, the vertex set is the set of zero-weight clusters, while the ``degree'' is the number of positive-weight half edges emanating from each zero-weight cluster. Thus, while the original network model might not have  heavy tailed degree distribution, the incorporation of these super conducting (namely zero passage time) edges leads to an intermediate state with heavy tails.

First passage percolation on heavy tailed random graph models,  with exponential edge passage times, were studied in \cite[Theorem 3.2]{BHH10b}. Here one set of results showed that  on power law graphs with $\tau\in(2,3)$ namely where the degree distribution satisfied, 
\beq
c_1 x^{-(\tau-1)} \le 1 - P( D \ge x) \le c_2 x^{-(\tau-1)}
 \quad\hbox{for $x\ge 1$}
\label{tau23}
\eeq
and $t_e\sim \mathrm{Exp}(1)$, the passage time between typical vertices $T_n(1,2) \convd \theta^{(1)}_\infty+\theta^{(2)}_\infty$ as $n\to\infty$, where $\theta^{(1)}_\infty,\theta^{(2)}_\infty$ are independent copies of the explosion time for the branching processes that approximate the local neighborhoods of the the two vertices. 
  Baroni et al. \cite{Baroni} extended this result to more general edge-weight distributions, proving $T_n(1,2) \convd \theta^{(1)}_\infty+\theta^{(2)}_\infty$ provided the  age-dependent branching processes approximating local neighborhoods explode in finite time.

In a different direction, in the same paper, Theorem 1.8 in \cite{Baroni} shows that for such heavy tailed random graphs with $\tau\in (2,3)$, if the edge passage times are bounded below by a positive constant (e.g. of the form $t_e = 1+X$ for some $X\geq 0 $ where the infimum of the support of $X = 0$) then 
\beq\label{baroni}
\frac{T_n(1,2)}{\log\log n} \probc \frac{2}{|\log(\tau-2)|}.
\eeq
This is the rationale for Conjecture \ref{conj:zero-one}.

The two papers on explosion for branching processes particularly influential for this paper is the foundational work by Grey in the 1970s \cite{Grey} and the comprehensive recent survey by Komj{\' a}thy \cite{Kom}. Applications of this methodology to first passage percolation in random graphs such as the configuration model was initiated in \cite{BHH10b}, but the role and impact of explosivity of local neighborhoods was explored by van der Hofstad, Komj{\' a}thy and co-authors in a sequence of papers e.g. \cites{van2015fixed,van2017explosion,baroni2019tight};  \cite{Baroni} was particularly influential in the technical portion of this paper.

\subsection{Critical scaling window}
As described above, one reason for the explosion of the associated branching processes, is the heavy tailed nature of the zero-weight clusters when the probability of a zero-weight edge equals $p_c = 1/\nu$. This is directly related to $p_c$ being the critical percolation threshold for the configuration model. It has been known, since the work of \erdos, that for many families of random graphs,   there is an entire critical scaling window. For the configuration model with finite third moments, the critical scaling window is of the form $p_c(\lambda) =1/ \nu+ \lambda n^{-1/3}$ for $\lambda \in \bR$. For percolation on the configuration model with this choice of edge retention probability,  order the sizes of the connected components as $\cC_{\sss(1),n}(\lambda) \geq \cC_{\sss(2), n}(\lambda) \geq \cdots$. View the entire vector $\bfC_{\sss(n)}(\lambda) = (\cC_{\sss(1),n}(\lambda), \cC_{\sss(2),n}(\lambda), \ldots,)$ as an element of $\ldown$:
\begin{equation}
    \ldown = \{(x_1,x_2,\ldots): x_1\geq x_2\geq \cdots \geq 0, \sum_i x_i^2< \infty\},
    \label{eqn:ldown}
\end{equation}
 via adjoining an infinite collection of zeros. Then it is known \cites{joseph,dhara2017critical} that there exists an infinite random vector $\mvxi(\lambda)$ with all positive entries  such that $n^{-2/3}\bfC(\lambda) \convd \mvxi(\lambda)$ on $\ldown$.  Furthermore, each of these maximal components, viewed as metric spaces with edges rescaled by $n^{-1/3}$, converge to limiting random compact metric spaces \cite{bhamidi2020geometry}. The bottom line is that, in the critical regime, major functionals of interest sensitively depend on the location (namely value of $\lambda$) in the critical regime. However, the critical FPP model considered in this paper, or rather the functionals we study, namely minimal passage times, are oblivious to these second order fluctuations as they depend on the local weak convergence of neigborhoods around typical vertices, which are not affected by the choice of $\lambda$. See Remark \ref{rem:crit} for a technical explanation for this robustness. We imagine that other functionals, such as extremal passage times (e.g., the flooding time in Theorem \ref{thm:extremal}) should feel the effect of this perturbation. In work in progress, we study one such functional, the hopcount, introduced in Definition \ref{def:main-func}.

\section{Proofs: Preliminary constructions and estimates}
\label{sec:preliminary}
\subsection{Overview}
\label{sec:proof-overview}
We start in Sections \ref{sec:adbp} and \ref{sec:comp-bp} by defining the main probabilistic tool of relevance for this paper, age dependent branching processes and their connection to first passage percolation. In Section \ref{sec:shortest-wg} we describe the sequential exploration of the shortest weight graph in the configuration model, starting from a typical vertex. Locally (i.e., at least until the exploration does not grow too large), in the $n\to\infty$ limit, this exploration should look like the branching process described in the prior two Sections.  To obtain the shortest path between two typical vertices, the now standard approach is to grow such shortest weight graphs simultaneously from these vertices till the two flow clusters ``merge'' (reach a vertex that is in both clusters) resulting in the optimal path. Section \ref{sec:proof-coupling} begins the technical heart of the paper where in part we derive error bounds tight enough to understand the strength of coupling between the shortest weight graphs from the two fixed vertices and corresponding age-dependent branching processes. Section \ref{sec:PfTh1} uses the estimates and constructions in this Section to complete the proof of Theorem \ref{thm:main}.  Section \ref{sec:pf-remaining} contains proofs of the remaining results.

\subsection{Explosive age-dependent Branching Process}
\label{sec:adbp}
We give a concise description of the key probabilistic tool employed in this paper, referring the interested reader to e.g. \cites{jagers,athreya2004branching,harris1963theory} for comprehensive treatments. 

\begin{definition}[Age-dependent branching process]\label{age_dependent_BP}
Fix distribution $F$ on $\bR_+$ and probability mass function on $\bZ_+$ with generating function $h$.  An age dependent branching process ${\sf BP}(F,h)$ is a branching process in continuous time,  starting with a single individual at time $t=0$ with individuals having random life-lengths with distribution function $F$ such that at death, an individual produces offspring of random size with probability generating function $h$. All life-lengths and family sizes are independent of each other. We assume that the process starts at time $t=0$ with one individual of age zero. 

\end{definition}

When individuals are allowed to give births immediately, i.e., when $F(0)>0$, we can start the branching process with one individual at time $t=0$ and have multiple births in the population at time $t=0$. This creates ambiguity in describing the size and structure of the population at a given time $t$. We introduce an equivalent characterization (from \cite{Grey}) of the  branching process ${\sf BP}(F,h)$ as follows. Define 
\beq\label{birth_time}
F^*(t)=
\begin{cases}
\frac{F(t)-F(0)}{1-F(0)} &\text{ if }F(0)>0,\\
F(t) &\text{ if }F(0)=0
\end{cases}
\eeq 
so that $F^*(0)=0$.  Let $h^*(\cdot)$ denote the probability generating function, arising as the unique solution of, 
\beq\label{adapted_offspring}
h^*(s)=h\left( (1-F(0))s+F(0)h^*(s)\right),
\eeq
for each $s$. In the context of the FPP model in this paper, $F^* = F_{\zeta}$ namely the positive part of the edge weight.  As described in \cite{Grey}, the process ${\sf BP}(F,h)$ is equivalent to ${\sf BP}(F^*,h^*)$ in the sense that, the immediate births of each individual and its descendants are incorporated into the offspring distribution in the latter process. See Fig \ref{fig:equivalence} for a pictorial representation. The ``offspring'' of each individual in $\BP(F^*,h^*)$ correspond to the individuals in $\BP(F,h)$ comprising of:
\begin{enumeratea}
\item  Original children of this vertex (the first generation under the root in Fig. \ref{fig:equivalence}) with strictly positive lifetimes;
\item Starting with all descendent lines of this individual such that every individual on this descendant line has age zero when it dies (all grey vertices in the figure e.g. vertex $u$ in Figure \ref{fig:equivalence}), immediate children with positive lifetimes at the end points of these descendant lines.  An example is vertex $v$. 
\end{enumeratea}

\begin{figure}
\begin{tikzpicture}[scale=.8, 
  level 1/.style={sibling distance=30mm, level distance=20mm},
  level 2/.style={sibling distance=20mm},
  level 3/.style={sibling distance=15mm},
  level distance=20mm,
  red/.style={circle, inner sep=3pt, ball color=red!60},
  green/.style={circle, inner sep=3pt, ball color=green!80},
  blue/.style={circle, inner sep=3pt, ball color=lightgray!10},
  edge from parent/.style={very thick, draw},
  edge from parent path/.style={draw, very thick, edge from parent path},
  edge from parent/.append style={->}
]

\node[red] {$\rho$}
  child [edge from parent/.append style={draw=green}] {node [green] {}}
  child [edge from parent/.append style={draw=green}] {node [green] {}}
  child [edge from parent/.append style={draw=blue}] {node [blue] {}
    child [edge from parent/.append style={draw=green}] {node [green] {}}
    child [edge from parent/.append style={draw=green}] {node [green] {}}
  }
  child [edge from parent/.append style={draw=blue}] {node [blue] {}
    child [edge from parent/.append style={draw=blue}] {node [blue] {$u$}
      child [edge from parent/.append style={draw=green}] {node [green] {$v$}}
      child [edge from parent/.append style={draw=green}] {node [green] {}}
    }
    child [edge from parent/.append style={draw=blue}] {node [blue] {}}
  }
  child [edge from parent/.append style={draw=blue}] {node [blue] {}
    child [edge from parent/.append style={draw=blue}] {node [blue] {}
      child [edge from parent/.append style={draw=green}] {node [green] {}}
    }
    child [edge from parent/.append style={draw=blue}] {node [blue] {}
      child [edge from parent/.append style={draw=green}] {node [green] {}}
      child [edge from parent/.append style={draw=green}] {node [green] {}}
      child [edge from parent/.append style={draw=green}] {node [green] {}}
    }
  }
  child [edge from parent/.append style={draw=green}] {node [green] {}};

\end{tikzpicture}
\label{fig:equivalence}
\caption{Pictorial description of the equivalence between $\BP(F,h)$ and $\BP(F^*,h^*)$. The grey vertices represent descendants of the root (represented by red) with zero lifetimes while the green vertices represent their children (including children of the root) with positive lifetimes.  }
\end{figure}

Thus if {$B\sim h$} is the original offspring distribution (in the Figure, the number of offspring of the root $B_\rho = 6$), then going back to Figure \ref{fig:equivalence}, $B^*$ the offspring in $\BP(F^*,h^*)$ consists of all the green vertices ($B^*_\rho = 11$). Let $\set{N(t): t > 0}$ denote the number of individuals alive at time $t$ in ${\sf BP}(F,h)$. Then, by construction, this is {\bf the same} as the {\bf alive} population in the corresponding branching process ${\sf BP}(F^*,h^*)$. In comparison, the dead population of ${\sf BP}(F,h)$ at time $t$ is different from that of  ${\sf BP}(F^*,h^*)$.

We have now outlined the connection between ${\sf BP}(F,h)$ and  ${\sf BP}(F^*,h^*)$. Throughout the paper, we use $F^*, h^*$ to describe settings where the associated $\BP$ has strictly positive lifetimes. Definitions of various phenomena such as \textit{explosiveness} will be phrased in terms of the corresponding construction $\BP(F^*, h^*)$. Komj{\'a}thy in \cite{Kom} studies in great depth, conditions for explosiveness for age dependent branching process, and our setting arises as \cite[Example 4.1]{Kom}. Since ${\sf BP}(F^*,h^*)$ has $F^*(0)=0$, thus satisfies the assumption in \cite[Eqn 3.5]{Kom}.

Let $|{\sf BP}_t(F^*,h^*)|$ denote the population of dead individuals in ${\sf BP}(F^*,h^*)$ at time $t$ (in \cite{Kom} this is referred to as the already existing population at time $t$).

\begin{defn}[Explosive age-dependent process]
Say ${\sf BP}(F^*,h^*)$ is \emph{explosive} if there is a positive probability that $|{\sf BP}_t(F^*,h^*)|=\infty$. Otherwise call the branching process {\em conservative}.
\end{defn}

By Assumptions \ref{ass}, for the branching processes in this paper, $\pr({\sf BP}(F^*,h^*)  \text{ is explosive}) \in \set{0,1}$.  The next Definition describes general methodology in \cite{Kom} for necessary and sufficient conditions for explosiveness. 

\begin{defn}[Plump distributions, {\cite{Kom}}]
\label{def:plump}
Say that an offspring distribution $B^*$ is plump if there exists constants $C,m_0 \in (0, \infty)$ and $\delta\in (0,1)$ so that for all $m\geq m_0$, 
$$\pr(B^* \ge m) \ge \frac{C}{m^{1-\delta}}.$$ 
\end{defn}
\begin{lem}[Lemma 5.8 in \cite{Kom}]
Consider a branching process $\BP(F^*, h^*)$ with strictly positive lifetime distribution $F^*$. Suppose the offspring distribution $B^*\sim h^*$ is plump. Then the associated age-dependent branching process is explosive if and only if the birth-time distribution $F^*$ satisfies for some $\ep>0$,
\beq
\label{PL-exp}
\int_0^\ep [F^*]^{-1}( \exp(-1/y)) \frac{dy}{y} < \infty,\qquad \text{or equivalently} \qquad \int_{1/\ep}^\infty [F^*]^{-1}(e^{-u}) \frac{du}{u} < \infty.
\eeq

\end{lem}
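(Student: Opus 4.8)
The plan is to establish both implications through the standard dichotomy: $\BP(F^*,h^*)$ explodes precisely when, with positive probability, its genealogical tree contains an infinite self-avoiding ray $\rho=(v_0,v_1,v_2,\dots)$ whose lifetimes are summable, $\sum_i \ell(v_i)<\infty$. Since $B^*$ is a proper (if heavy-tailed) random variable the tree is locally finite, so ``infinitely many individuals are born by time $t$'' is equivalent, via König's lemma, to the existence of such a ray with all partial lifetime-sums at most $t$; letting $t$ range over the rationals gives the dichotomy. I would also record at the outset the reformulation of \eqref{PL-exp} noted above (Cauchy condensation together with monotonicity of $u\mapsto[F^*]^{-1}(e^{-u})$): for any fixed $b>1$ the integral is finite iff $\sum_{n\ge1}[F^*]^{-1}(e^{-b^{n}})<\infty$. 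This is the form I would actually use.

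For sufficiency, assume \eqref{PL-exp}, pick $b\in(1,1/(1-\delta))$ (where $\delta$ is the plumpness exponent) and then a small $\theta\in(0,\delta)$ with $a:=(1-\theta)/(1-\delta)>b$. I would construct the fast ray greedily while maintaining a doubly-exponentially growing ``fecundity budget'' $Z_n$. On the positive-probability survival event the population is unbounded, so some individual has at least $Z_0$ children; call it $w_0$. Given $w_n$ with at least $Z_n$ children: by plumpness the count of these children having themselves at least $Z_{n+1}$ children stochastically dominates a binomial with mean $\asymp Z_n Z_{n+1}^{-(1-\delta)}$; choosing $Z_{n+1}$ so that this mean equals $K_{n+1}:=Z_n^{\theta}$ forces $Z_{n+1}\asymp Z_n^{a}$, hence $Z_n\asymp\exp(c_0 a^n)$. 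A Chernoff bound produces at least $K_{n+1}/2$ such ``fecund'' children off an event of probability $e^{-cK_{n+1}}$; among these, whose lifetimes are i.i.d.\ $F^*$, the minimum lifetime exceeds $[F^*]^{-1}(t_{n+1})$ with probability at most $e^{-t_{n+1}K_{n+1}/2}$, so setting $t_{n+1}:=(4\log n)/K_{n+1}$ makes this $O(n^{-2})$; let $w_{n+1}$ be a fecund child of minimal lifetime. Since $K_{n+1}\asymp\exp(\theta c_0 a^n)$ and $a>b$, one gets $t_{n+1}\le e^{-b^{n}}$ for large $n$, so $\ell(w_{n+1})\le[F^*]^{-1}(e^{-b^{n}})$. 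By Borel--Cantelli all these events hold eventually with positive probability, and then $\sum_n\ell(w_n)\le C+\sum_n[F^*]^{-1}(e^{-b^{n}})<\infty$, so the process explodes.

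For necessity I would assume \eqref{PL-exp} fails and show conservativeness. Set $\mu(t):=\pr(\text{explosion by time }t)$; decomposing on the root's lifetime $L$ and the i.i.d.\ copies rooted at its $B^*$ children gives the fixed-point equation $\mu(t)=\int_0^t\bigl(1-h^*(1-\mu(t-s))\bigr)\,dF^*(s)$, where $g(x):=1-h^*(1-x)$ is increasing, concave, $g(0)=0$, and — by plumpness — satisfies $g(x)\asymp x^{1-\delta}$ as $x\downarrow0$. The claim is that when the integral diverges the only bounded nonnegative solution is $\mu\equiv0$; since the explosion-time distribution function solves this equation, conservativeness follows. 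I would argue by supposing $\mu(t_0)>0$, iterating the equation down a geometric sequence of times $t_0>t_1>\cdots$ and using $g(x)\asymp x^{1-\delta}$ to derive a lower recursion on $\mu(t_k)$ which, after inverting $F^*$, forces a partial sum of $[F^*]^{-1}(e^{-b^{k}})$-type terms to be finite — contradicting divergence. This is where plumpness is used decisively: because $\E[B^*]=\infty$, the offspring side is never the bottleneck, so explosiveness is governed purely by the behaviour of $F^*$ near $0$.

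I expect the necessity direction to be the main obstacle. The naive first-moment approach fails outright: $\E[\#\{\text{depth-}n\text{ vertices born by }t\}]=\E[B^*]^n\,\pr(L_1+\cdots+L_n\le t)$ has an infinite first factor, so one cannot union-bound over potential rays. One is forced into the self-consistent equation for $\mu$ (equivalently, a size-biased/spine description of the tree) and must extract the sharp inversion tying the \emph{non}existence of a fast ray to divergence of $\int_0^\ep[F^*]^{-1}(e^{-1/y})\,dy/y$. The competing-constraints bookkeeping in the sufficiency construction — simultaneously keeping the chosen line fecund \emph{and} short-lived, with summable failure probabilities — is the other technical, though more routine, point.
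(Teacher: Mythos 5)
The paper itself gives no proof of this lemma---it is imported verbatim from \cite[Lemma 5.8]{Kom}---so the comparison is with Komj\'athy's proof. Your sufficiency half is essentially her argument (going back to Amini--Devroye--Griffiths--Olver): a greedy ray of ``fecund, short-lived'' children with doubly exponential fecundity budgets, the base of the double exponential chosen strictly between $1$ and $1/(1-\delta)$ so that the minimum lifetime at step $n$ is at most $[F^*]^{-1}(e^{-b^n})$. Apart from routine repairs (the adaptive use of Borel--Cantelli should be replaced by starting the construction at a scale where the total failure probability is $<1$, or by a fresh-start argument on the survival event), that half is sound.

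The necessity half has a genuine gap. First, it is only a plan: the decisive step---iterating $\mu(t)=\int_0^t\bigl(1-h^*(1-\mu(t-s))\bigr)\,dF^*(s)$ down a geometric sequence of times and ``inverting $F^*$'' to contradict divergence---is precisely where all the work lies, and you assert it rather than carry it out; this analytic route is Grey's, and extracting the min-summability criterion from it for infinite-mean $B^*$ is not routine. Second, the estimate you lean on, $g(x)\asymp x^{1-\delta}$, requires an \emph{upper} bound on the tail of $B^*$, whereas Definition \ref{def:plump} as stated in this paper gives only the lower bound $\pr(B^*\ge m)\ge Cm^{-(1-\delta)}$. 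This is not cosmetic: with only the lower bound the ``explosive $\Rightarrow$ min-summable'' direction is false. For instance, if $\pr(B^*\ge m)\sim 1/\log m$ (which satisfies the stated plumpness), generation sizes grow like a tower of exponentials, and the same greedy-ray construction produces explosion for lifetime laws that are not min-summable, e.g. $[F^*]^{-1}(e^{-u})=1/\log u$. Komj\'athy's plumpness is two-sided (it also imposes a polynomial upper bound on the quantile function of $B^*$), and her necessity proof uses that bound quite differently from your sketch: generation sizes are bounded above by a doubly exponential function almost surely, and the weight of every ray is bounded below by the sum over generations of the minimum lifetime within that generation, a sum of independent terms which diverges almost surely when \eqref{PL-exp} fails; base-independence of min-summability finishes the argument. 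To close your gap you would need either to adopt that route, or to state the upper tail hypothesis explicitly and genuinely execute the fixed-point iteration you describe.
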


\subsection{The comparison branching process}
\label{sec:comp-bp}
Next we connect the above setting to the random graph model with edge weights in Section \ref{sec:rg-cm}. This branching process will be used to understand asymptotics for the local structure of the shortest weight graph, emanating from a uniformly sampled vertex. Recall the original degree distribution $\vp$ and let $h$ denote the probability generating function of the size-biased distribution $\vq$ defined in \eqref{size_biased}. The phrase ``\emph{modified}'' in the next definition is used to remind the reader that the first generation has a different offspring distribution from subsequent generations. 

\begin{construction}[Modified age-dependent branching process]\label{comparison_BP}
Let $D\sim \vp$  and $\widetilde{\vB} = (\widetilde B_i)_{i\geq 1}$ be \emph{i.i.d.}~random variables following the size-biased distribution $\vq$, independent of $D$. Construct the following modified age-dependent branching process, denoted by $(\widetilde{\sf BP}_t)_{t\geq 0}=(\widetilde{\sf BP}_t(F,h))_{t\geq 0}$
:
\begin{enumeratei}
\item Start with the root $u_1 = \rho$ which dies immediately giving birth to $D$ offspring;
\item Each alive offspring has an independent life-length following the distribution $F$. We will label the vertices in increasing order of vertex death times,  breaking ties arbitrarily for zero lifetime vertices. 
\item When the $i$-th vertex dies, it gives birth to $\widetilde B_i$ alive offspring.  
\end{enumeratei}
In analogy with \eqref{birth_time} and \eqref{adapted_offspring}, define $(\widetilde{\sf BP}^*_t)_{t\geq 0}=(\widetilde{\sf BP}_t(F^*,h^*))_{t\geq 0}$ for the corresponding branching process of $(\widetilde{\sf BP}_t)_{t\geq 0}$ with strictly positive lifetime distribution.
\end{construction}

Before continuing, we setup a definition for use later. 

\begin{defn}[Discrete skeletons of $\widetilde{\BP}$ and $ \widetilde{\BP}^*$]
\label{def:disc-skeleton}
Write $(\widetilde{\BP}[m])_{m\geq 1} = (\widetilde{\BP}_{T_m})_{m\geq 1}$ viewed as a sequence of increasing trees with vertex labels according to the order of birth and edge labels corresponding to the life-lengths and call this the discrete skeleton of $\widetilde{\BP}$. Analogously, let $(\widetilde{\BP}^*[m])_{m\geq 1}$ denote the corresponding discrete skeleton for $(\widetilde{\sf BP}^*_t)_{t\geq 0}$. 

\end{defn}

In order to understand the first passage percolation model, the starting point  is the zero-weight cluster containing the root vertex, which corresponds to the dead population in $\widetilde{\sf BP}_{0+}(F,h)$. In addition, we want to understand the distribution of the number of positive-weight edges attached to the vertices in the zero-weight cluster of the root. This corresponds to the number of alive individuals in  $\widetilde{\sf BP}_{0+}(F,h)$, which is the same as in $\widetilde{\sf BP}_{0+}(F^*,h^*)$.

The inhomogeneity of the root offspring distribution in contrast with the rest of the process is a minor technical annoyance. To simplify the question, we start by considering the homogeneous age-dependent branching process, denoted by ${\sf BP}=({\sf BP}_t)_{t\geq 0}$, whose offspring distribution is given by $\vq$ as in \eqref{size_biased} and the birth times follow the same distribution $F$ as for the original process $(\widetilde{\sf BP}_t)_{t\geq 0}$. Assuming this can be analyzed,  suppose the root in $(\widetilde{\sf BP}_t)_{t\geq 0}$ has degree $D\sim \vp$. Let $D^{(0)}= \mathrm{Binomial}(D,p_c)$ denote the number of edges of the root having zero weight. Let $\{{\sf BP}^{(i)}: i\geq 1\}$ be independent copies of the homogeneous branching process ${\sf BP}$ and use $|{\sf BP}^{(i)}_t|$ to denote the size of the dead population in ${\sf BP}^{(i)}_t$. Then,
$$|\widetilde{\sf BP}_{0+}|\overset{d}{=} 1+ \sum_{i=1}^{D^{(0)}} |{\sf BP}^{(i)}_{0+}|.$$
This allows one to read off properties for $\widetilde{\sf BP}$ easily from ${\sf BP}$.  Hence we will focus on the {\bf size of a zero-weight cluster} in ${\sf BP}$. This can be studied via an exploration random walk which recursively produces two sets, the set of \emph{active} vertices $(\cA_i : i \geq 1)$ and the set of \emph{removed} vertices $(\cR_i : i \geq 1)$, as follows:
\begin{enumeratea}
\item At step $i=0$, set $\cA_0 = \set{\rho}$ (namely the root as the initial active vertex $\rho = v_0$) and initialize the removed set $\cR_0 = \emptyset$. 
\item At every step $i\geq 1$, we randomly pick an active vertex, denoted by $v_i$, and reveal the number of its \emph{active} offspring $\eta_i=\mathrm{Binomial}(B_i,p_c)$, where $B_i$ is an independent random variable following the size-biased distribution $\vq$ as in \eqref{size_biased}. The $\eta_i$ offspring of $v_i$ are added to the active set $\cA_i$ which is updated to $\cA_{i+1}$.  Vertex $v_i$ is added to the removed set. Update the removed set via $\cR_{i+1} = \cR_i \cup \set{v_i}$. 
\item Repeat (b) until there are no more active vertices.
\end{enumeratea}

For $i\geq 0$, let $Q_i = |\cA_i|$ denote the number of active vertices at step $i$, initialized at $Q_0 =1$.  Since $(B_i)_{i\geq 0}$ is a collection of \emph{i.i.d.}~random variables following the size-biased distribution $\vq$, the dynamics of the size of the active set $Q_i = |\cA_i|$ can be written as a random walk with \emph{i.i.d.}~increments, 
\beq\label{RW}
Q_i=Q_{i-1}+\eta_i-1 \quad \text{ where } Q_0=1 \text{ and } \eta_i= \mathrm{Binomial}(B_i,p_c).
\eeq
At time $\chi=\min\{ i\geq 0: Q_i=0\}$ the exploration terminates and the size of the zero-weight cluster is given by $\chi$. 
To compute the distribution of $\chi$, we will use a result from \cite{Pitman} originating in \cite[Lemma 6.1]{Tak}.  
\begin{lem}[Kemperman's formula]\label{Kemperman}
 Suppose that for any $m\geq 1$, $ (\eta_1, \ldots \eta_m)$ are cyclically exchangeable integer valued random variables with $\eta_i\geq -1$ a.s. Define $W_j = \sum_{i=1}^j \eta_i$ and
$$
T_{-k} = \inf\{ j > 0 : W_j-j = -k\}.
$$
Then, 
$$
\pr( T_{-k}=m) = \frac{k}{m} \pr(W_m-m=-k).
$$
\end{lem}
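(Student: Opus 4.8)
The plan is to recognize the identity as (a form of) Kemperman's cycle-lemma formula and to prove it by averaging a deterministic combinatorial fact over the $m$ cyclic rotations, using only the cyclic exchangeability of $(\eta_1,\dots,\eta_m)$. First I would set $S_j:=W_j-j=\sum_{i=1}^{j}(\eta_i-1)$; since $\eta_i\ge 0$ the increments $\eta_i-1$ are bounded below by $-1$, so $j\mapsto S_j$ has down-steps of size at most one and therefore cannot reach a level strictly below $-k$ without first hitting $-k$. Hence $\{T_{-k}=m\}=\{S_m=-k\}\cap\{S_j>-k\ \text{for all }1\le j\le m-1\}$, and moreover $S_m=-k$ forces $k=m-W_m\le m$ (because $W_m=\sum_i\eta_i\ge 0$), so only $1\le k\le m$ ever matters.

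Next I would invoke the cycle lemma in the following form: for fixed integers $x_1,\dots,x_m\ge -1$ with $\sum_i x_i=-k$ and $1\le k\le m$, among the $m$ cyclic rotations $x^{(r)}=(x_{r+1},\dots,x_m,x_1,\dots,x_r)$, $r=0,\dots,m-1$, exactly $k$ are \emph{good}, meaning their partial-sum walk $S^{(r)}$ satisfies $S^{(r)}_j>-k$ for all $1\le j\le m-1$ (equivalently, first hits $-k$ at step $m$). This is the classical lemma of Dvoretzky and Motzkin; I would either cite it (e.g.\ \cite[Ch.~6]{Pitman}, \cite[Lemma~6.1]{Tak}) or prove it by the standard doubled-walk/last-exit bijection: concatenate two copies of $(x_i)$ into a length-$2m$ walk $\widehat S$, observe that $r$ is good iff $\widehat S_i>\widehat S_r-k$ for all $r<i<r+m$ while $\widehat S_{r+m}=\widehat S_r-k$, and then biject the good indices $r$ with the $k$ levels $0,-1,\dots,-(k-1)$, using the down-step bound to make the assignment well defined and to rule out double counting.

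Finally I would do the averaging. Let $G\subseteq\ZZ^m$ be the set of good length-$m$ integer sequences (sum $-k$, partial-sum walk first hitting $-k$ at time $m$), and for $r=0,\dots,m-1$ let $A_r$ be the event that the $r$-th cyclic rotation of $(\eta_1,\dots,\eta_m)$ lies in $G$. Then $A_0=\{T_{-k}=m\}$ by the first step; cyclic exchangeability gives $(\eta_{r+1},\dots,\eta_{r+m})$ (indices mod $m$) the same law as $(\eta_1,\dots,\eta_m)$, hence $\pr(A_r)=\pr(A_0)$ for every $r$; and pointwise $\sum_{r=0}^{m-1}\ind_{A_r}=k\,\ind\{W_m-m=-k\}$, since this sum is $0$ unless $S_m=W_m-m=-k$ (a rotation-invariant event) and equals $k$ on that event by the cycle lemma. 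Taking expectations yields $m\,\pr(T_{-k}=m)=k\,\pr(W_m-m=-k)$, which is the assertion.

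The only genuine obstacle is the exact count ``$k$'' (not merely $\ge k$ or $\le k$) in the cycle lemma; the probabilistic part is pure bookkeeping once that is granted. I would handle it via the bijection sketched above, being careful about the wrap-around and about strict versus weak inequalities; alternatively it can simply be quoted from \cite[Ch.~6]{Pitman} or \cite[Lemma~6.1]{Tak}, which is presumably the intended route since those references are already cited for this lemma.
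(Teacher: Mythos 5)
Your argument is correct, and it is essentially the intended one: the paper does not prove this lemma at all but simply quotes it from \cite{Pitman} (originating in \cite[Lemma 6.1]{Tak}), and the proof given there is exactly your route — reduce to the deterministic Dvoretzky--Motzkin cycle lemma for a skip-free-downward walk and average the indicator of ``good'' rotations over the $m$ cyclic shifts, using cyclic exchangeability to equate $\pr(A_r)=\pr(A_0)$ and the pointwise identity $\sum_{r}\ind_{A_r}=k\,\ind\{W_m-m=-k\}$. One point worth flagging: you quietly assume $\eta_i\ge 0$, so that the increments $\eta_i-1$ of $S_j=W_j-j$ are $\ge -1$; the lemma as printed says $\eta_i\ge -1$, under which $W_j-j$ can jump down by $2$ and the formula fails (e.g.\ $m=1$, $k=2$, $\eta_1=-1$). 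Your hypothesis is the right one for the walk $W_j-j$: the statement in the paper has simply transplanted the condition appropriate for $W_j$ itself, and in the application $\eta_i$ is Binomial, hence nonnegative, so your version is exactly what is used in Lemma \ref{size_cluster}. With that reading, the skip-free argument identifying $\{T_{-k}=m\}$ with the ``good rotation at $r=0$'' event, the exact count of $k$ good rotations (which you correctly isolate as the only real content, to be cited or proved by the doubled-walk bijection), and the averaging step are all sound; the possible periodicity of the sequence causes no trouble since you count starting indices rather than distinct rotated sequences.
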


We now describe how this leads, in the critical regime, to the scaling of the size of the active set of vertices of the exploration process. 

\begin{lem}[Size of the zero-weight cluster]\label{size_cluster}
Let $\chi=\min\{ i\geq 0: Q_i=0\}$ for the random walk $Q_i$ defined in \eqref{RW}. There exist strictly positive finite constants $C_1,C_2$ so that for all $m \geq 1$, 
\beq
C_1m^{-1/2}\leq \pr(\chi\geq  m) \leq C_2m^{-1/2}.
\label{tauasy}
\eeq

\end{lem}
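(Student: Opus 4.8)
\textbf{Proof proposal for Lemma \ref{size_cluster}.}

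The plan is to apply Kemperman's formula (Lemma \ref{Kemperman}) to convert the tail estimate $\pr(\chi \geq m)$ into a statement about the increment random walk, and then extract the $m^{-1/2}$ asymptotics from a local central limit theorem (local CLT) together with the critical mean condition. First I would note that with $\eta_i = \mathrm{Binomial}(B_i, p_c)$ and $p_c = 1/\nu$, where $B_i$ has the size-biased law $\vq$ with $\E[B_i] = \nu$, we have $\E[\eta_i] = p_c \nu = 1$; hence the increments $\eta_i - 1$ are centered, so $Q_i$ is a critical random walk (this is exactly why (A3) is the ``critical'' regime). Moment condition (A2) gives $\E[D^{3+\eta}] < \infty$, which transfers to a finite third-plus moment for $\vq$ and therefore for $\eta_i$; in particular $\sigma^2 := \var(\eta_i) \in (0,\infty)$. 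Since $\chi = T_0$ in the notation of Lemma \ref{Kemperman} with $k = Q_0 = 1$, writing $W_m = \sum_{i=1}^m \eta_i$, Kemperman's formula gives
$$
\pr(\chi = m) = \frac{1}{m}\,\pr(W_m - m = -1) = \frac{1}{m}\,\pr(W_m = m-1).
$$

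Next I would obtain sharp two-sided bounds on $\pr(W_m = m-1)$. Because $W_m$ is a sum of $m$ i.i.d.\ integer-valued summands with mean $1$, finite positive variance $\sigma^2$, and (from (A2)) a finite third absolute moment, and because the increments are aperiodic (the distribution of $\eta_i$ charges both $0$ and $1$ with positive probability, since $\pr(D = 2) < 1$ forces $\vq$ to charge values $\geq 2$ and also $q_1 > 0$ by supercriticality, so $\eta_i$ can be $0$ or $1$), the classical local CLT (Gnedenko's local limit theorem, e.g.\ Petrov or Durrett) yields
$$
\pr(W_m = m - 1) = \frac{1}{\sqrt{2\pi \sigma^2 m}}\bigl(1 + o(1)\bigr)
$$
uniformly as $m \to \infty$, since $m - 1$ is within $O(1)$ of the mean $\E[W_m] = m$. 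In particular there exist constants $0 < c_1 \leq c_2 < \infty$ and $m_1$ so that $c_1 m^{-1/2} \leq \pr(W_m = m-1) \leq c_2 m^{-1/2}$ for all $m \geq m_1$ (and one can absorb the finitely many small $m$ into the constants, noting $\pr(\chi = m) > 0$ for all relevant $m$). Combining with Kemperman's formula, $\pr(\chi = m) = \Theta(m^{-3/2})$, and then summing the tail,
$$
\pr(\chi \geq m) = \sum_{j \geq m} \pr(\chi = j) = \Theta\!\left(\sum_{j \geq m} j^{-3/2}\right) = \Theta(m^{-1/2}),
$$
which is exactly \eqref{tauasy}. (For the lower bound one needs $\pr(\chi < \infty) = 1$, i.e.\ that the critical walk hits $0$ a.s.; this is standard for a mean-zero, finite-variance increment walk started at $1$ with increments $\geq -1$.)

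The main obstacle is establishing the local CLT with the required \emph{uniformity} and, more importantly, getting genuine two-sided (not merely $O(\cdot)$) control on $\pr(W_m = m-1)$ for all $m \geq 1$, not just asymptotically. For the upper bound in \eqref{tauasy} an asymptotic local CLT plus a crude bound for small $m$ suffices. For the lower bound one must ensure $\pr(W_m = m-1)$ does not degenerate; aperiodicity of the increment distribution of $\eta_i$ is the key point, and I would verify it carefully from Assumption \ref{ass}: supercriticality (A1) gives $p_1 > 0$ or $p_0 > 0$ is not forced, but $\pr(D \geq 2) = 1$ and $\pr(D = 2) < 1$ guarantee $\sum_{k \geq 3} p_k > 0$, hence $q_2 = 3p_3/\E[D] + \cdots > 0$ and $q_0 = p_1/\E[D]$; one must check $q_0 + q_1 > 0$ (true since $\E[D] < \infty$ and not all mass of $\vp$ sits at a single point $\geq 3$), so $B_i$ takes at least two distinct values and the $\mathrm{Binomial}(B_i, p_c)$ mixture is aperiodic with span $1$. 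A secondary (purely technical) point is transferring $\E[D^{3+\eta}] < \infty$ to $\E[\eta_i^{2+\eta'}] < \infty$, which follows from $q_k = (k+1)p_{k+1}/\E[D]$ and a one-line moment comparison. Once aperiodicity and the third-moment bound are in hand, the rest is a routine application of Gnedenko's local limit theorem and summation of a $p$-series.
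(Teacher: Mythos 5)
Your proposal is correct and follows essentially the same route as the paper: Kemperman's formula converts $\pr(\chi=m)$ into $\frac{1}{m}\pr(W_m-m=-1)$, the local CLT for the span-one, mean-$1$, finite-variance increments (variance finite by (A2)) gives $\pr(W_m-m=-1)=\Theta(m^{-1/2})$, and summing the resulting $\Theta(m^{-3/2})$ probabilities yields \eqref{tauasy}. One small remark: it is the upper bound in \eqref{tauasy}, not the lower, that requires $\pr(\chi<\infty)=1$ (otherwise $\pr(\chi\geq m)$ would not decay at all), but since that fact is standard for a critical finite-variance walk with increments bounded below by $-1$, as you note, nothing essential is missing.
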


\begin{rem}
\label{rem:crit}
The above Lemma \ref{size_cluster} and Lemma \ref{cluster_degree} below form the main ingredients of the proof and are not effected if the probability of zero weight edges is changed from $p_c$ to a different point $p_c+\lambda/n^{1/3}$ in the critical scaling window of $\CM_n(\vp)$. Thus the results on the asymptotics for passage times between typical vertices seem robust to such perturbations. 
\end{rem}

\begin{proof}
It follows from \eqref{RW} and the definition of $W_j$ that $Q_j = 1+W_j-j$ for $j\geq 0$. Note that $(\eta_i)_{i\geq 1}$ are \emph{i.i.d.}~lattice distributions with span one,  with mean 1 and variance $\var_\eta = (1-p_c) + p_c^2 \var(B_i)< \infty$ by Assumption \ref{ass}. Applying the local central limit theorem (see e.g., \cite[Theorem 3.5.3]{PTE5}) to $R_m-m$ we get that 
$$
\lim_{m\to\infty} m^{1/2} \pr(W_m-m =-1)= (2\pi\var_\eta)^{-1/2}.
$$
We are interested in $\chi = T_{-1}$, whose law, according to Lemma \ref{Kemperman} is,  
$$\pr( \chi=m) = \frac{1}{m} \pr(W_m-m=-1).$$
Hence there exists some $m_0\in \mathbb{N}$ such that for $m\geq m_0$,
$$\frac{1}{2}(2\pi\var_\eta)^{-1/2}m^{-3/2}\leq \pr(\chi=m)\leq 2(2\pi\var_\eta)^{-1/2}m^{-3/2}.$$ 
\end{proof}

Next we consider the number of positive-weight edges connected to a zero-weight cluster of the root (as described above, this is the number of alive individuals in  $\widetilde{\sf BP}_{0+}(F,h)$, which is the same as in $\widetilde{\sf BP}_{0+}(F^*,h^*)$). Recall that $B_i$ denotes the offspring degree of the vertex $v_i$ chosen at the $i$-th step in the exploration process. For $i\geq 1$, let $\xi_i=\mathrm{Binomial}(B_i,1-p_c)=B_i-\eta_i$ denote the number of positive-weight edges attached to $v_i$. Define 
$$S_n =\sum_{i=1}^n \xi_i, \quad W_n=\sum_{i=1}^n \eta_i \quad \text{and}\quad V_n = \sum_{i=1}^n B_i.$$
We will bring in the restriction of viewing the above sequences only up to the stopping time $\chi$ in a subsequent Lemma. For the time being view the sequences $\set{S_n:n\geq 1}$, $\set{W_n:n\geq 1}$ and $\set{V_n:n\geq 1} $ as a collection of coupled random walks driven by the sequence $\set{B_i:i\geq 1}$ and additional Binomial sampling.  Note that $V_n=W_n+S_n$ and $W_n=\mathrm{Binomial}(V_n, p_c)$, $S_n=\mathrm{Binomial}(V_n, 1-p_c)$. According to the previous discussion, $S_\chi$ then gives the number of positive-weight edges connected to a zero-weight cluster.
 In order to understand $S_{\chi}$, we first start by exploiting the connection between $S_n$ and $W_n$.
 \begin{lem}\label{R&S}
 There exists some constants $\rho^+,\rho^-,c>0$ such that $ \rho^-W_n< S_n<\rho^+W_n$ with probability at least $1-2\exp(-2cn)$.
 \end{lem}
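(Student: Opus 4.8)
The plan is to condition on the driving i.i.d.\ sequence $(B_i)_{i=1}^n$ and to exploit the fact that, given this sequence, $W_n$ is a binomial random variable whose number of trials is \emph{linear} in $n$; such a binomial concentrates exponentially around its mean, even though the $B_i$ themselves are only assumed to have finitely many polynomial moments (Assumption~(A2) gives $\E[B^{2+\eta}]<\infty$ but no exponential moment). This conditioning step is the one place in the argument that needs care: a direct Chernoff bound on $W_n=\sum_i\eta_i$ is not available, but once the total $V_n$ is frozen the fluctuations of $W_n$ and of $S_n=V_n-W_n$ are exactly complementary, and the relative fluctuation $W_n/V_n$ around $p_c$ is light-tailed. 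Beyond this there is no real obstacle; the remaining work is routine algebra and bookkeeping of constants.

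First I would record the deterministic bound $V_n=\sum_{i=1}^nB_i\ge n$. This uses Assumption~(A1): since $\pr(D\ge 2)=1$ we have $p_0=p_1=0$, so the size-biased mass function $\vq$ of \eqref{size_biased} has $q_0=p_1/\E[D]=0$, i.e.\ $B_i\ge 1$ almost surely. Next, conditionally on $(B_i)_{i=1}^n$ the variables $\eta_i=\mathrm{Binomial}(B_i,p_c)$ are independent, so $W_n\mid(B_i)_{i=1}^n\sim\mathrm{Binomial}(V_n,p_c)$ and $S_n=V_n-W_n$. Fixing $\delta=\tfrac12\min(p_c,1-p_c)>0$ (strictly positive since $p_c=1/\nu\in(0,1)$), the multiplicative Chernoff bound for binomials supplies a constant $c_0=c_0(p_c)>0$ with
\[
\pr\Big(|W_n-p_cV_n|>\delta V_n \;\Big|\; (B_i)_{i=1}^n\Big)\le 2\exp(-c_0V_n)\le 2\exp(-c_0 n),
\]
the last inequality by the deterministic bound; since the right-hand side does not depend on $(B_i)$, the same estimate holds unconditionally. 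Let $\mathcal G_n$ denote the complementary event, of probability at least $1-2\exp(-c_0n)$.

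On $\mathcal G_n$ we have $W_n\in[(p_c-\delta)V_n,(p_c+\delta)V_n]$, hence $S_n=V_n-W_n\in[(1-p_c-\delta)V_n,(1-p_c+\delta)V_n]$, and in particular $W_n\ge(p_c-\delta)n>0$. Dividing these two ranges gives
\[
\frac{1-p_c-\delta}{p_c+\delta}\ \le\ \frac{S_n}{W_n}\ \le\ \frac{1-p_c+\delta}{p_c-\delta},
\]
with both outer constants in $(0,\infty)$ by the choice of $\delta$. To obtain the strict inequalities in the statement I would take $\rho^-$ to be half the left-hand constant and $\rho^+$ twice the right-hand one (still in $(0,\infty)$), which also handles the degenerate case $W_n=0$ since $W_n>0$ on $\mathcal G_n$; finally setting $c=c_0/2$ puts the probability bound in the stated form $1-2\exp(-2cn)$.
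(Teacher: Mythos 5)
Your proof is correct and takes essentially the same route as the paper: binomial concentration at scale $V_n$, the deterministic bound $V_n\ge n$ (coming from $\pr(D\ge 2)=1$, so $B_i\ge 1$ a.s.), and elementary algebra converting additive concentration into a two-sided ratio bound between $S_n$ and $W_n$. The only difference is cosmetic: you condition on $(B_i)_{i\le n}$ and use a single Chernoff bound for $W_n$ together with the exact identity $S_n=V_n-W_n$, whereas the paper applies separate large-deviation bounds to $W_n$ and $S_n$ and rearranges via $V_n=W_n+S_n$; your explicit conditioning also makes the statement ``$W_n\sim\mathrm{Binomial}(V_n,p_c)$'' rigorous when $V_n$ is random, a point the paper glosses over.
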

 \begin{proof}
 Since $W_n=\mathrm{Binomial}(V_n, p_c)$ and $S_n=\mathrm{Binomial}(V_n, 1-p_c)$, by standard large deviations for the Binomial distribution, for any $\ep>0$ there exist constants $c_1,c_2>0$ such that,
\begin{align*}
&\pr( |W_n - p_cV_n| \ge \ep p_c V_n ) \le \exp(-c_1n), \\
&\pr( |S_n - (1-p_c)V_n| \ge \ep(1-p_c) V_n ) \le \exp(-c_2n).
\end{align*} 
 Here we use the fact  $V_n\geq n$ to replace $V_n$ by $n$ in the exponent. Algebra then yields,
 \begin{align*}
  \pr\left( \bigg|\frac{S_n}{(1-p_c)V_n}-\frac{W_n}{p_c V_n}\bigg|\geq 2\ep\right) &\leq   \exp(-c_1n)+\exp(-c_2n)\leq 2\exp(-\min\{c_1,c_2\}n).
 \end{align*}
 Thus with probability at least $1-2\exp(-\min\{c_1,c_2\}n)$ one has,
 $$  \frac{1-p_c}{p_c}W_n-2\ep(1-p_c)V_n\leq S_n\leq  \frac{1-p_c}{p_c}W_n+2\ep(1-p_c)V_n.$$
 The above rearranges to,
 $$  \frac{(1-p_c)( 1/p_c-2\ep)}{1+2\ep(1-p_c)}W_n\leq S_n\leq \frac{ (1-p_c)(1/p_c+2\ep)}{1-2\ep(1-p_c)}W_n,$$
 as $V_n=W_n+S_n$. Choosing $\ep>0$ sufficiently small and taking $\rho^-<\frac{ (1-p_c)( 1/p_c-2\ep)}{1+2\ep(1-p_c)}, \rho^+> \frac{ (1-p_c)(1/p_c+2\ep)}{1-2\ep(1-p_c)}$ completes the proof.
 \end{proof}
 Now consider the number of positive weighted edges $S_\chi$, originating from a zero-weight cluster.

 \begin{lem}[Number of positive-weight edges]\label{cluster_degree}
 There exist $C_1,C_2>0$ such that for $m\geq 1$,
  $$C_1m^{-1/2}\leq \pr( S_\chi \geq m) \leq C_2m^{-1/2}.$$
 \end{lem}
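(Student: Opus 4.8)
Here is how I would prove Lemma \ref{cluster_degree}.

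The plan is to transfer the $m^{-1/2}$ decay of $\pr(\chi\ge m)$ from Lemma \ref{size_cluster} to $\pr(S_\chi\ge m)$, using two structural facts: the deterministic identity $W_\chi=\chi-1$ (immediate from $Q_\chi=1+W_\chi-\chi=0$), so that on $\{\chi=n\}$ one has $S_n=V_n-W_n=V_n-(n-1)$; and Lemma \ref{R&S}, which says that $S_n$ and $W_n$ are comparable up to the multiplicative constants $\rho^-,\rho^+$ off an event $G_n^c$ with $\pr(G_n^c)\le 2e^{-2cn}$, where $G_n:=\{\rho^-W_n\le S_n\le\rho^+W_n\}$. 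For the lower bound, observe that on $\{\chi=n\}\cap G_n$ we have $S_\chi=S_n\ge\rho^-(n-1)$, which is at least $m$ once $n\ge m/\rho^-+1$; since the events $\{\chi=n\}\cap G_n$ are disjoint,
\[
\pr(S_\chi\ge m)\ \ge\ \sum_{n\ge m/\rho^-+1}\pr(\chi=n,\,G_n)\ \ge\ \pr\!\Big(\chi\ge\tfrac{m}{\rho^-}+1\Big)-\sum_{n\ge m/\rho^-}2e^{-2cn}.
\]
By Lemma \ref{size_cluster} the first term is $\ge c_1 m^{-1/2}$ and the second is $O(e^{-2cm/\rho^-})$, so for all large $m$ the right-hand side is $\ge\frac{c_1}{2}m^{-1/2}$; for $m$ in a bounded range one uses $\pr(S_\chi\ge m)\ge\pr(S_\chi\ge M)>0$ (positive, e.g.\ because $\pr(\chi=n,G_n)\ge\pr(\chi=n)-2e^{-2cn}>0$ for $n$ large enough that $\rho^-(n-1)\ge M$, using that $\pr(\chi=n)$ decays only polynomially). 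This gives $\pr(S_\chi\ge m)\ge C_1 m^{-1/2}$ for all $m\ge1$.

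For the upper bound, write $\pr(S_\chi\ge m)=\sum_{n\ge1}\pr(\chi=n,\,S_n\ge m)$ and split at $n=\delta m$ for a small fixed $\delta\in(0,1/(2\nu))$. The tail piece is crude: $\sum_{n\ge\delta m}\pr(\chi=n,\,S_n\ge m)\le\pr(\chi\ge\delta m)\le C_2'(\delta m)^{-1/2}$ by Lemma \ref{size_cluster}. For the head piece $n<\delta m$, use $S_n=V_n-(n-1)$ on $\{\chi=n\}$ to rewrite the event as $\{V_n\ge m+n-1\}$, and then apply a marked version of Kemperman's formula: conditionally on the unordered collection $\{\!\{B_1,\ldots,B_n\}\!\}$ the increments $\eta_i$ are still cyclically exchangeable while $V_n=\sum_iB_i$ is measurable with respect to it, so Lemma \ref{Kemperman} applied conditionally and then averaged yields
\[
\pr(\chi=n,\,V_n\ge m+n-1)\ =\ \tfrac1n\,\pr\!\big(W_n=n-1,\,V_n\ge m+n-1\big)\ \le\ \tfrac1n\,\pr(V_n\ge m).
\]
Since $\E[V_n]=\nu n<m/2$ in this range, Assumption \ref{ass}(A2) (a finite $(3+\eta)$-th moment of $D$ forces a finite $(2+\eta)$-th moment of $B_i$) together with Rosenthal's inequality gives $\E|V_n-\E V_n|^{2+\eta}\le Cn^{1+\eta/2}$, hence $\pr(V_n\ge m)\le C'n^{1+\eta/2}m^{-(2+\eta)}$ and
\[
\sum_{n<\delta m}\pr(\chi=n,\,S_n\ge m)\ \le\ \frac{C'}{m^{2+\eta}}\sum_{n<\delta m}n^{\eta/2}\ \le\ C''\delta^{1+\eta/2}m^{-1-\eta/2}\ \le\ C''\delta^{1+\eta/2}m^{-1/2}.
\]
Adding the two ranges gives $\pr(S_\chi\ge m)\le C_2 m^{-1/2}$, completing the proof.

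The delicate point is the factor $\tfrac1n$ in the displayed identity: without it the head sum would only be $O(m^{-\eta/2})$, weaker than $m^{-1/2}$ whenever $\eta<1$, and it is precisely this gain that lets the heavy-tailed deviation bound for $V_n$ (which has no moment beyond order $2+\eta$) close the estimate. So the step needing the most care is verifying that Kemperman's formula survives the conditioning on $\{\!\{B_1,\ldots,B_n\}\!\}$, i.e.\ that cyclic exchangeability of $(\eta_1,\ldots,\eta_n)$ is retained there; everything else reduces to Lemmas \ref{size_cluster} and \ref{R&S} and routine concentration.
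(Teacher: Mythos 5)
Your proof is correct, and while the lower bound is essentially the paper's, the upper bound takes a genuinely different route. For the lower bound, both you and the paper transfer the tail of $\chi$ from Lemma \ref{size_cluster} to $S_\chi$ via $W_\chi=\chi-1$ together with the comparison event of Lemma \ref{R&S}; the paper phrases this through monotonicity of $S_n,W_n$ on $\{\chi>m\}$ rather than summing over $\{\chi=n\}$, but the ingredients are identical. For the upper bound the paper again leans on Lemma \ref{R&S}: it writes $\pr(S_\chi\ge\rho^+m)\le\pr(\chi>m)+\pr(S_\chi\ge\rho^+W_\chi,\,\chi\le m)$ and asserts the second term is at most $m\exp(-2cm)$. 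As written that step is loose: for small values of $\chi$ the event $S_\chi\ge\rho^+W_\chi$ is not rare at all (e.g.\ $\chi=1$ gives $W_\chi=0$), and a union bound over $n\le m$ of the $2e^{-2cn}$ failure probabilities only produces a constant; what really makes the small-$\chi$ range negligible is that $S_\chi\ge\rho^+m$ with $\chi$ small forces a large deviation of $V_\chi$, which the paper does not spell out. Your argument supplies exactly this: split at $n=\delta m$, handle $n\ge\delta m$ by Lemma \ref{size_cluster}, and for $n<\delta m$ rewrite $\{\chi=n,\,S_n\ge m\}$ as $\{\chi=n,\,V_n\ge m+n-1\}$, extract the crucial factor $1/n$ by applying Kemperman's formula (Lemma \ref{Kemperman}) conditionally on the multiset of the $B_i$ --- valid, since given the multiset the sequence $(\eta_1,\dots,\eta_n)$ is still exchangeable and $V_n$ is multiset-measurable --- and close with a Rosenthal--Markov bound using $\E[B^{2+\eta}]<\infty$ from Assumption \ref{ass}(A2). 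This costs more machinery (a conditional cycle lemma and a $(2+\eta)$-moment estimate) than the paper's one-line appeal to Lemma \ref{R&S}, but it is self-contained and handles the small-$\chi$ regime more carefully than the paper's own write-up does.
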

 \begin{proof}
 By definition we have $W_\chi=\chi-1$. Let $\rho^{\pm},c$ be the constants in Lemma \ref{R&S}. As $S_n$ and $W_n$ are both non-decreasing with respect to $n$, 
 \begin{align*}
 \pr( S_\chi \geq \rho^- m)&\geq \pr( S_{\chi}\geq \rho^- m, \chi>m)\geq \pr( S_m \geq \rho^- m, \chi>m )\\
 &\geq \pr( S_m \geq \rho^- W_{m}, W_m \geq  m,  \chi>m )\\
 &\geq \pr( W_m \geq  m,  \chi>m )-2\exp(-cm)\\
 &=\pr(  \chi>m)-2\exp(-cm)\geq Cm^{-1/2},
 \end{align*}
 for some $C>0$, where the last equality is because on the event $\{\chi>m\}$ we have $W_m-m>-1$, i.e., $W_m\geq m$. The lower bound then follows from Lemma \ref{size_cluster}.
 
 For the upper bound we use the fact that $W_\chi=\chi-1$ and observe,
 \begin{align*}
 \pr(S_\chi\geq \rho^+m)&\leq \pr(S_\chi\geq \rho^+m, \chi>m)+\pr(S_\chi\geq \rho^+m, \chi\leq m)\\
 &\leq \pr(\chi>m)+\pr(S_\chi\geq  \rho^+W_\chi, \chi\leq m)\\
 &\leq Cm^{-1/2}+m\exp(-2cm)\leq C'm^{-1/2},
 \end{align*}
 for some $C'>0$.
 
 \end{proof}

\subsection{Smallest-weight graph in configuration model}
\label{sec:shortest-wg}

As described in Section \ref{sec:proof-overview}, to understand the optimal path between the vertices $u\neq v \in [n]$, we  grow the shortest weight clusters between these two vertices simultaneously till they have a common vertex. The goal of this Section is to describe the approximation of such clusters by the branching process described above. We first need a few constructions wherein we simultaneously explore the shortest weight graph around a vertex and construct the graph corresponding to this cluster. Recall the construction of the configuration model in Section \ref{sec:rg-cm} including the notion of free half-edges/stubs and the matching procedure to create a full edge. We write $(\SWG^{v}_m)_{m\geq 1}$ for \emph{smallest-weight graph}  starting from vertex $v$ constructed sequentially described next. We sometimes use the phrase ``\emph{flow clusters}'' in place of shortest weight graphs, as this is more evoctive of the underlying process, namely flow starting from the source vertex and moving at rate one through the graph using the edges with prescribed lengths.

\begin{defn}[Smallest-weight graph]\label{SWG}
 Initialize with $\SWG_{1}^{v}=\{v\}$. Add the edge and vertex with minimal edge weight connecting $v$ to one of its neighbors. The growth of $(\SWG^{v}_m)_{m\geq 1}$ is then defined recursively: given $\SWG^v_{m}$, we obtain $\SWG^v_{m+1}$ by adding the edge and the end vertex connected to $\SWG^v_{m}$ with minimal edge weight (ties broken arbitrarily). 
\end{defn}

With some abuse of notation we will write $(\SWG^{v}_m)_{m\geq 1}$ as a collection of vertices while implicitly including also the edges. The smallest-weight graph (SWG) in principle can be constructed on any given graph with edge weights. When the underlying graph is described by the configuration model $\CM$, we next describe the approach in \cite{BHH10b} to simultaneously grow the graph as well as the shortest-weight graph from a given vertex. In the construction below, a stub that has not been paired is referred to as a \emph{free} stub. We will assign weights to stubs according to the distribution $F$. A stub with weight zero is said to be a \emph{zero stub} and otherwise a \emph{positive stub}. A stub whose weight has not yet been assigned is said to be \emph{unmarked}. The construction proceeds conditional on the degree sequence $\vd:= \set{d_i: i\in [n]}$.

\begin{construction}\label{SWG_on_CM}
The construction of the shortest weight graph starting from vertex $v$,  $(\SWG^{v}_m)_{m\geq 1}$, on the configuration model proceeds iteratively:
\begin{enumeraten}
\item Start with $\SWG_{1}^{v}=\{v\}$, where the vertex $v$ has $d_v$ stubs, each with an independent weight drawn from distribution $F$.
\item Given $\SWG^v_m$ with known stub weights, we select the stub with the minimal weight (breaking ties arbitrarily) attached to $\SWG^v_m$ and pair it with a free stub chosen uniformly at random, designating its endpoint as $v_{m+1}$:

 \begin{itemize}
\item If the chosen stub is unmarked, we have obtained $\SWG^v_{m+1}=\SWG^v_m\cup \{v_{m+1}\}$. We then independently assign weights to each remaining free stub of $v_{m+1}$ that was previously unmarked.

\item  If the chosen stub is marked, we say a \textbf{collision} occurs, and the construction stops.
\end{itemize}
\item Repeat Step (2) until a collision occurs.
\end{enumeraten}

\end{construction}

To simplify notation, when there is no scope for confusion,  we use $\SWG$ to denote the smallest-weight graph starting from a uniformly chosen vertex in the given graph.

\subsection{Coupling SWG with age-dependent branching process}
\label{sec:proof-coupling}
The goal of this Section is to summarize both known results as well as derive technical estimates that will allow us to couple, local neighborhoods of the $\SWG$ with a corresponding age dependent branching process.  Obstacles in coupling include:

\begin{enumeratea}
\item Arising of collision events. Since we grow two flow clusters, say first starting at $v$ and then $u$, one obvious example of collision is that the flow cluster from $v$ finds vertex $u$. 
\item The fact that for finite $n$, the outward exploration from a typical vertex gives rise to forward degrees that is only approximately independent and only approximately has the limit size biased distribution $\vq$. 
\end{enumeratea}

Growing shortest weight graphs from two vertices on the configuration model, either in the first passage percolation setup \cites{BHH10b,BHH17}, or in trying to understand just graph distance \cites{van2005distances,10.1214/EJP.v12-420}, is now relatively standard and all of the above obstacles have been tackled in previous papers.  The aim of this Section is to review the technical components necessary for addressing these approximations, but first, we will establish some notation.

Recall that we used $\CM_n(\vp)$ for the random graph generated by first sampling the degree sequence $\vD = (d_i)_{i\in [n]}$ in an \emph{i.i.d.}~fashion from $\vp$ and then considering the configuration model and the ensuing first passage percolation problem on the model. We will typically work conditionally on the generated degree sequence $\vD$ and will ocassionally write $\CM_n(\vD)$ and write $\pr_n$ for computing probabilities conditional on the degree sequence to make this distinction.

Now recall the Construction \ref{SWG_on_CM}. For $m\geq 1$, define
\beq\label{real_time}
R_m:=\min\{ j\geq 1: \SWG_j \text{ contains $m$ vertices}\}.
\eeq
Note that we have $R_m=m$ exactly when $\SWG$ is a tree, i.e., no cycle has been created in the construction by time $m$. 
We say a \emph{collision} has occurred when a cycle is created during the growth of $\SWG$, and define the collision time to be 
\beq\label{collision_time}
R_{\mathrm{col}}=\min\{ m\geq 1: R_m>m  \}.
\eeq

Construction \ref{SWG_on_CM} can be extended to take into account collisions as in \cite[Section 4.2]{BHH10b} where, a modified construction of the SWG was introduced where edges that create cycles (and self-loops) are deleted and replaced by ``artificial stubs". As a result, the modified SWG constructed on a graph remains a tree that can essentially be coupled with (and stochastically dominated by) a branching process whose offspring is driven by the size-biased version of the empirical distribution of degrees. Our arguments for understanding the passage time need the construction only before this collision time so these additional approximations will not be required.  We observe that $(\SWG_m)_{m\geq 1}$ remains a tree until the first collision and focus on the growth of the SWG before $R_{\mathrm{col}}$.

  Let $(\SWG^v_m)_{m\geq 1}$ denote the SWG on $\CM_n(\vD)$ starting from a uniformly chosen vertex $v$ as described in Construction \ref{SWG_on_CM}. 
Let $v_i$ denote the vertex explored in the $i$-th step in $(\SWG^v_m)_{m\geq 1}$. For $i\geq 2$, we will denote by $B_i$ the degree of vertex $v_i$ minus 1, i.e., $B_i$ is the \emph{forward degree} of $v_i$. We are interested in the forward degrees of previously unexplored vertices, i.e., $(B_{R_m})_{m\geq 2}$.  Now consider two distinct and uniformly sampled vertices $u,v$. First grow the shortest weight graph $\SWG^v_{m_n}$ for up to $m_n$ steps (unless a collision event happens) and then start growing the shortest weight graph from $\SWG^u$ unless $u \in \SWG^v_{m_n}$ or one of the stubs previously found in $\SWG_{m_n}^v$ is sampled. Define $R^v = R_{\mathrm{col}}^v$ as above and after growing $\SWG_{m_n}^v$, define $\tilde{R}^u$ to be the first time $m\geq 0$ such that either $u$ is already in $\SWG_{m_n}^v$ or another active stub in $\SWG_{m_n}^v$ is sampled by $\SWG^u$ or there is a collision in the growth of the cluster $\SWG^u$. We use notation $\tilde{R}^u$ since, owing to the sequential construction, first starting from $v$ and then $u$,  it is different from $R^v$ in distribution.

\begin{prop}[Coupling over mesoscopic growth of the clusters]\label{couple_iid}
First consider the shortest weight graph $\SWG^v$ from a single randomly sampled vertex $v$. Fix any $m_n = o(\sqrt{n})$.  Then under Assumptions \ref{ass}:
\begin{enumeratea}
\item {\bf Probability of collision from a single source:} Then the probability of collision $\pr( R_{\mathrm{col}}\leq m_n) \to 0$ as $n\to \infty$. 
\item {\bf Coupling to an {\it i.i.d.} sequence:} Let $\vd$ be a given degree sequence and let $L_n = \sum_{i=1}^n d_i$. Consider the empirical distribution
\beq\label{empirical_dist}
 g^{(n)}_k :=\sum_{i=1}^n \frac{ (k+1)\ind\{d_i=k+1\}}{L_n}, \qquad k\geq 0.
\eeq
Let $\set{Y_i^{\sss(n)}:i\geq 2}$ be an \emph{i.i.d.}~sequence with distribution $(g^{(n)}_k)_{k\geq 0}$.  There is a coupling of $(B_{R_i})_{2\leq i\leq m_n }$ with $(Y_i)_{2\leq i\leq m_n}$ such that 
\[\pr(\cap_{i=2}^{m_n}\set{B_{R_i} = Y_i^{\sss(n)}}) \to 1, \qquad \text{ as } n\to\infty. \]
\item {\bf Collision events from multiple sources:} Next consider the growth from two clusters first from $v$ and then from $u$ as described above. Then $\pr( \min (R^v, \tilde{R}^u) \leq m_n) \to 0$ as $n\to \infty$. 
\item {\bf Coupling for multiple sources:} The coupling in (b) with an \emph{i.i.d.}~sequence for a single source $\SWG^v$ continues to hold starting from two distinct randomly sampled sources.  
\end{enumeratea}

\end{prop}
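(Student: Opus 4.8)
### Proof proposal for Proposition 4.?? (Coupling over mesoscopic growth)

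The plan is to treat the four parts in the order stated, since each later part leans on the earlier ones, and to reduce everything to standard ``exploration versus i.i.d.\ sequence'' estimates for the configuration model, with the extra bookkeeping coming from the weighted (smallest-weight-graph) order of exploration rather than the usual breadth-first order. Throughout I would condition on the degree sequence $\vd$ and verify that the generated $\vd$ satisfies the regularity conditions (convergence of the empirical degree distribution to $\vp$ and of $(g^{(n)}_k)$ to $\vq$, boundedness of the relevant moments, $L_n/n\to \E[D]$) with high probability, using Assumption \ref{ass}(A2); all subsequent claims are then claims about $\pr_n$ that hold whp in $\vd$.

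For part (a), the key observation is that the \emph{unweighted} skeleton of $(\SWG^v_m)_{m\geq 1}$ explored up to step $m$ touches at most $m$ vertices and at each step pairs a designated stub with a uniformly chosen free stub; a collision at step $m$ means this free stub belongs to the already-explored set, which has at most $\sum_{i\le m} d_{v_i}$ stubs. Since the free-stub count is $L_n - O(m) = \Theta(n)$ whp, the probability of a collision at step $m$ is at most $C\,(\sum_{i\le m} d_{v_i})/n$, and a union bound over $m\le m_n$ together with control of $\sum_{i\le m_n} d_{v_i}$ (which is $O_{\pr}(m_n)$ using the $3+\eta$ moment, or more crudely $O_{\pr}(m_n \cdot n^{o(1)})$ using truncation of the largest degree) gives $\pr_n(R_{\mathrm{col}} \le m_n) = O(m_n^2/n) \to 0$ because $m_n = o(\sqrt n)$. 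The truncation of atypically large degrees along the path is the one place requiring care; I would handle it by a first-moment bound showing that no vertex of degree $> n^{1/4}$, say, is reached within $m_n$ steps whp, which is where the moment condition $\E[D^{3+\eta}]<\infty$ enters.

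For part (b), on the event (whp by part (a)) that no collision has occurred, the forward degree $B_{R_i}$ of the $i$-th freshly discovered vertex is obtained by size-biased sampling from the \emph{remaining} degrees, i.e.\ sampling a free stub and taking the forward degree of its endpoint. The target i.i.d.\ sequence $(Y_i^{\sss(n)})$ is size-biased sampling \emph{with replacement} from all of $\vd$, with law $(g^{(n)}_k)$. I would build the coupling step by step: at step $i$ both sequences sample a free stub; they can be coupled to agree unless the stub sampled by the exploration lies among the $O(\sum_{j<i} d_{v_j})$ already-used stubs, an event of probability $O(i\cdot \max\text{-degree}/n)$ per step by the same estimate as in (a). Summing over $i \le m_n$ and using the degree truncation gives total coupling-failure probability $o(1)$, so $\pr_n(\cap_{i=2}^{m_n}\{B_{R_i}=Y_i^{\sss(n)}\}) \to 1$. (One also notes the weights play no role here: the edge weights determine \emph{which} active stub is selected next but not the law of the endpoint's forward degree, which is exchangeable over the free stubs.)

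For parts (c) and (d), I would run the two explorations sequentially exactly as in the setup: grow $\SWG^v$ for $m_n$ steps, then grow $\SWG^u$. Part (c) is a union of three bad events — (i) a collision inside $\SWG^v$ (part (a)), (ii) a collision inside $\SWG^u$ (part (a) again, with the pool of used stubs enlarged by the $O(m_n \cdot \text{max-deg})$ stubs of $\SWG^v_{m_n}$, which is still $o(n)$), and (iii) $\SWG^u$ hitting $u \in \SWG^v_{m_n}$ or hitting one of the active stubs of $\SWG^v_{m_n}$; event (iii) again has probability $O(m_n \cdot \text{used stubs}/n) = O(m_n^2/n) = o(1)$. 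Part (d) then follows because, conditioned on the complement of all these events, the forward degrees discovered by $\SWG^u$ are size-biased samples from the degrees not yet used by either exploration, and the same step-by-step coupling as in (b) applies verbatim (the already-used pool has size $O(m_n\cdot\text{max-deg}) = o(\sqrt n \cdot n^{o(1)})$, so the per-step coupling error is still summable).

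The main obstacle I expect is the uniform control of $\sum_{i\le m_n} d_{v_i}$ — equivalently, ruling out that the weighted exploration is biased toward high-degree vertices in a way that inflates the number of used stubs beyond $O_{\pr}(m_n \cdot n^{o(1)})$. Because stubs are selected by \emph{weight} rather than uniformly, one must be a little careful that this selection is independent of the forward-degree of the endpoint (it is, by exchangeability of the pairing), so that the endpoint degrees are still essentially size-biased and the truncation-of-large-degrees argument goes through; making this independence precise, and choosing the truncation level to balance the $3+\eta$ moment against the $m_n = o(\sqrt n)$ budget, is the technical heart of the proof. Everything else is a routine union bound of $O(m_n^2/n)$-type errors.
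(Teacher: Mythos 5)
Your argument is correct, but it is worth noting that the paper does not prove this proposition directly at all: its ``proof'' consists of citing \cite[Lemma A.1]{BHH10b} for parts (a) and (c) and \cite[Lemma 5.1 and Eqn.~(5.11)]{BHH17} for parts (b) and (d). What you have written is essentially a reconstruction of the standard argument underlying those cited lemmas: condition on the degree sequence, observe that the weight-driven choice of which active stub to pair next is independent (by exchangeability of the uniform matching) of the law of the pairing partner, bound the per-step collision/coupling-failure probability by (used stubs)$/L_n$, control the used stubs by $O_{\pr}(m_n)$ via the finite size-biased mean, and union bound to get $O(m_n^2/n)=o(1)$; the two-source parts then follow by running the explorations sequentially with the used-stub pool enlarged by the first cluster. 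So the mathematics agrees with the source the paper relies on, and your explicit identification of the exchangeability point is precisely what justifies transferring those lemmas (stated for breadth-first or exponential-weight explorations) to the present weighted exploration. One small caution: your fallback bound $\sum_{i\le m_n} d_{v_i}=O_{\pr}(m_n\cdot n^{o(1)})$ via degree truncation is not by itself enough when $m_n$ is taken arbitrarily close to $\sqrt n$ (e.g.\ $m_n=\sqrt n/\log n$), since $m_n^2 n^{o(1)}/n$ need not vanish; the moment-based bound $O_{\pr}(m_n)$, which only needs $\E[D^2]<\infty$ and which you state first, is the one to use.
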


\begin{proof}
\begin{enumeratea}
Part(a) and (c) follows from the statement and proof of \cite[Lemma A.1]{BHH10b}. 
For part(b) and (d) see \cite[Lemma 5.1 and Eqn. (5.11)]{BHH17} and its proof. 
\end{enumeratea}

\end{proof}

\begin{rem}
    The above proposition allows us to describe various simultaneous couplings of the shortest weight graphs from two randomly sampled vertices by essentially proving the desired result for one of the shortest weight graphs, since as long as one grows these clusters up to a ``mesoscopic'' time scale where the total number of half-edges used is $o_{\pr}(\sqrt{n})$ one has ``independent'' evolution of the two shortest weight graphs. This theme will be present in many of the remaining proofs of this Section where we mainly describe the proof only for one cluster. 
\end{rem}

In principle,  we would like to couple the forward degrees to an {\it i.i.d.} sequence with distribution given by the ``limit'' size biased distribution $\vq$ in \eqref{size_biased} as opposed to the empirical size-biased pmf as in \eqref{empirical_dist}.  The next result is the initial step on this path and follows similar lines as \cite[Proposition A.1.1]{distfv}.

Recall that $\vD=(D_1,\dots,D_n)$ denotes a sequence of \emph{i.i.d.}~samples from $\vp$.
\begin{lem}\label{tv_bound}
Let  $g^{(n)}=(g^{(n)}_k)_{k\geq 0}$ be defined as in \eqref{empirical_dist} with $\vd=\vD$.
When the degree exponent $\tau$ of the original pmf $\vp$ satisfies $\tau>3$, for any $\ep>0$ we have
$$\pr( d_{\mathrm{TV}}(g^{(n)},g)\geq n^{-(\frac{\tau-2}{2\tau-3}-\ep)})=o(1).$$
\end{lem}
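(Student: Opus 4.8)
\textbf{Proof plan for Lemma \ref{tv_bound}.}

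The plan is to write the total variation distance as a sum over $k$ and estimate each term, then sum up. Recall that $g_k = q_k = (k+1)p_{k+1}/\sum_j jp_j$ and
$$g^{(n)}_k = \frac{(k+1)\sum_{i=1}^n \ind\{D_i=k+1\}}{L_n}, \qquad L_n = \sum_{i=1}^n D_i.$$
Write $\mu := \E[D] = \sum_j jp_j$ and $N_k := \sum_{i=1}^n \ind\{D_i = k+1\}$, so that $\E[N_k] = n p_{k+1}$ and $\E[L_n] = n\mu$. The first step is to replace the random normalization $L_n$ by its mean: since $L_n$ is a sum of i.i.d.\ variables with a finite (in fact $(3+\eta)$-th) moment, standard concentration (e.g.\ Chebyshev, or a Marcinkiewicz--Zygmund type bound exploiting the moment assumption (A2)) gives $|L_n - n\mu| = O_\pr(n^{1/2})$, and in particular $\pr(|L_n/(n\mu) - 1| \ge n^{-\alpha}) = o(1)$ for a suitable $\alpha$. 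On that good event one has $g^{(n)}_k = (1+O(n^{-\alpha})) (k+1)N_k/(n\mu)$ uniformly in $k$, so it suffices to control $\sum_k |(k+1)N_k/(n\mu) - q_k|$, i.e.\ the deviations of the empirical degree counts.

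The second step is to split the sum at a truncation level $K_n$ (to be optimized, polynomial in $n$). For the \textbf{head} $k < K_n$: for each fixed $k$, $(k+1)N_k/(n\mu) - q_k$ has mean zero (after the $L_n$-replacement) and variance of order $(k+1)^2 p_{k+1}/n$, so by Chebyshev each term is $O_\pr((k+1) p_{k+1}^{1/2} n^{-1/2})$; summing over $k< K_n$ and using Cauchy--Schwarz together with $\sum_k (k+1)^2 p_{k+1} = \E[D^2] < \infty$ (guaranteed by (A2)) yields a head contribution of order $K_n^{1/2} n^{-1/2}$ up to logarithmic factors — more carefully, a first-moment/union bound on $\sum_{k<K_n}|(k+1)N_k/(n\mu) - q_k|$ gives a bound of the same order in probability. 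For the \textbf{tail} $k \ge K_n$: here one bounds $\sum_{k\ge K_n} g^{(n)}_k \le (1+o_\pr(1))\sum_{k\ge K_n}(k+1)N_k/(n\mu)$ and $\sum_{k\ge K_n} q_k$ directly. Using the power-law assumption $\tau > 3$, one has $p_j = O(j^{-\tau})$, hence $\sum_{k\ge K_n} q_k = O(K_n^{-(\tau-2)})$, and the empirical tail $\sum_{k\ge K_n}(k+1)N_k/n$ concentrates around its mean $\sum_{k\ge K_n}(k+1)p_{k+1}/\mu = O(K_n^{-(\tau-2)})$ with deviations controlled again via the moment bound (A2) — this is where $\E[D^{3+\eta}]<\infty$ is used to get enough concentration for the heavy-tailed tail sum.

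The third step is to balance the two error terms. The head gives roughly $(K_n/n)^{1/2}$ and the tail gives roughly $K_n^{-(\tau-2)}$; setting these equal gives $K_n = n^{1/(2\tau-3)}$ and a common value $n^{-(\tau-2)/(2\tau-3)}$, which matches the exponent claimed in the statement (the $\ep$ slack absorbing the logarithmic and constant factors, as well as the $L_n$-replacement error provided $\alpha$ is chosen appropriately, e.g.\ $\alpha$ slightly less than $1/2$). Assembling: on an event of probability $1-o(1)$ we get $d_{\mathrm{TV}}(g^{(n)},g) \le \tfrac12\big(\text{head} + \text{tail}\big) \le n^{-(\frac{\tau-2}{2\tau-3}-\ep)}$, which is the assertion.

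\textbf{Main obstacle.} The delicate point is the tail estimate: because the degree distribution is only assumed to be power-law with exponent $\tau$ (not bounded), the empirical tail sum $\sum_{k\ge K_n}(k+1)N_k/n$ is a sum of i.i.d.\ heavy-tailed contributions and does not concentrate as sharply as a light-tailed sum. One must use the $(3+\eta)$-th moment assumption carefully — e.g.\ via a truncation of the summands together with a Fuk--Nagaev or Marcinkiewicz--Zygmund inequality — to show the deviation of this tail sum is itself $o_\pr(K_n^{-(\tau-2)})$ (or at worst the same order), rather than dominating. Getting the exponent in Lemma \ref{tv_bound} exactly right, rather than a weaker polynomial rate, hinges on doing this bookkeeping tightly; a cruder argument would still give \emph{some} polynomial rate of convergence, which is in fact all that is needed for the subsequent coupling arguments, so if the sharp exponent proves cumbersome the fallback is to prove the weaker (but sufficient) statement $\pr(d_{\mathrm{TV}}(g^{(n)},g)\ge n^{-\delta}) = o(1)$ for some $\delta>0$ depending on $\tau$ and $\eta$.
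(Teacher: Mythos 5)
Your plan is essentially the paper's proof: the same two-step decomposition (replace the random normalization $L_n$ by $n\mu$ on an event of probability $1-o(1)$, then split the remaining sum at a polynomial truncation level), the same second-moment/Cauchy--Schwarz bound for the head giving order $K_n^{1/2}n^{-1/2}$, the same first-moment bound $O(K_n^{-(\tau-2)})$ for the tail, and the same optimization $K_n=n^{1/(2\tau-3)}$ (the paper's $n^{a}$ with $a=1/(2\tau-3)$) yielding the exponent $(\tau-2)/(2\tau-3)$. The ``main obstacle'' you flag is not actually one: since the tail sum is nonnegative, a plain Markov bound $\pr\bigl(\tfrac1n\sum_{i}(D_i+1)\ind\{D_i\ge n^{a}\}>n^{-b}\bigr)\le n^{b}\,\E[(D+1)\ind\{D\ge n^{a}\}]=O(n^{b-a(\tau-2)})$ suffices, because the $\ep$-slack in the stated exponent allows taking $b$ slightly below $a(\tau-2)$ --- no Fuk--Nagaev or Marcinkiewicz--Zygmund concentration is needed, and this is exactly what the paper does.
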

\begin{proof}
Fix $a,b,\alpha>0$. First observe that
\begin{align}\label{total_variation}
\nonumber \sum_{k=0}^\infty |g_k^{(n)}-g_k|&\leq \sum_{k=0}^\infty \left(|g^{(n)}_k-\frac{n\mu}{L_n}g_k|+g_k|\frac{n\mu}{L_n}-1|\right)\\
&=  \sum_{k=0}^\infty \frac{(k+1)}{L_n} | \sum_{i=1}^n(\ind\{D_i=k+1\}-p_{k+1})|+|\frac{n\mu}{L_n}-1|.
\end{align}
To bound \eqref{total_variation}, define the event
\begin{align}
\nonumber F_n&=\{ |\frac{n\mu}{L_n}-1|\leq n^{-\alpha}\} \cap \left\{ \frac{1}{n} \sum_{i=1}^n (D_i+1)\ind\{ D_i\geq n^a\}\leq n^{-b}\right\}\\
&\quad \cap\left\{ \frac{1}{n}\sum_{k=0}^{n^a} (k+1) |\sum_{i=1}^n(\ind\{D_i=k+1\}-p_{k+1})|\leq n^{-b}\right\}=: F_{1,n}\cap F_{2,n}\cap F_{3,n}
\end{align}
so that on $F_n$ one has 
$$\sum_{k=0}^\infty  |g_k^{(n)}-g_k|\leq n^{-\alpha}+\frac{2(1+n^{-\alpha})}{\mu}n^{-b},$$
where the second term comes from the inequality $\frac{\mu n}{L_n}\leq 1+n^{-\alpha}$ guaranteed on the event $F_n$. It remains to find suitable values of $a,b,\alpha$ such that $\pr(F_n^c)=o(1)$. It follows directly from the central limit theorem for $L_n$ that for any choice of $0<\alpha<1/2$, $\pr(F_{1,n}^c) = o(1)$. Next, an application of Markov's inequality gives for some constant $C'\in \bR_+$,
$$\pr(F_{2,n}^c)\leq n^b\E[(D_1+1)\ind\{D_1\geq n^a\}]\leq C'n^{b-a(\tau-2)}.$$
Finally, using Markov inequality and Cauchy-Schwarz inequality,
\begin{align*}
\pr(F_{3,n}^c)&\leq n^{1-b} \E\left[ \sum_{k=0}^{n^a} (k+1) |\sum_{i=1}^n (\ind\{D_i=k+1\}-p_{k+1})|\right]\\
&\leq n^{b-1} (n^a+1)^{1/2} \E\left[ \left(\sum_{k=0}^{n^a} (k+1)^2(\sum_{i=1}^n (\1\{D_i=k+1\}-p_{k+1})^2\right)^{1/2}\right]\\
&\leq 2n^{b+a/2-1}\left( \sum_{k=0}^{n^a} (k+1)^2 \E(\sum_{i=1}^n (\1\{D_i=k+1\}-p_{k+1})^2\right)^{1/2},
\end{align*}
where the last line follows from Jensen's inequality. Next note that, 
$$\E(\sum_{i=1}^n (\1\{D_i=k+1\}-p_{k+1})^2=\mathrm{Var}\left( \sum_{i=1}^n (\1\{D_i=k+1\}\right)=np_{k+1}(1-p_{k+1})\leq np_{k+1}.$$
Hence,
\begin{align*}
\pr(F_3^c)&\leq 2n^{b+a/2-1}\left( \sum_{k=0}^{n^a} (k+1)^2 np_{k+1}\right)^{1/2}\leq C n^{b+a/2-1/2}n^{a\max\{0, 3-\tau\}/2}.
\end{align*}
As we are assuming $\tau>3$, it suffices to choose $a\in (0,1)$, $0<b<\min\{ a(\tau-2), (1-a)/2\}$ and $\alpha \in (0,1/2)$ to ensure $\pr(F^c)=o(1)$. The bound is optimized when $a=1/(2\tau-3)$, where for any $\ep>0$ we can choose $b=\frac{\tau-2}{2\tau-3}-\ep$ and $\alpha\in (b,1/2)$ so that 
$$\pr( d_{\mathrm{TV}}(g^{(n)},g)\geq n^{-(\frac{\tau-2}{2\tau-3}-\ep)})=o(1).$$
\end{proof}

\begin{prop}\label{coupling_forward_degrees}
For $(\SWG^v_m)_{m\geq 1}$ on $\mathrm{CM}_n(\vD)$ as in the setting of Proposition \ref{couple_iid} with $\tau>3$, for any $\rho<\frac{\tau-2}{2\tau-3}$ the random vector of forward degrees $(B_{i})_{i=2}^{n^{\rho}}$ can be coupled to an independent sequence of random variables $(B^{(ind)}_{i})_{i=2}^{n^{\rho}}$ with probability mass function $g = \vq$ given in \eqref{size_biased} in the sense that $(B_{i})_{i=2}^{n^{\rho}}=(B^{(ind)}_{i})_{i=2}^{n^{\rho}}$ whp. 

The same result continues to hold for two sequentially constructed shortest weight graphs $\SWG^v_{n^{\rho}}, \SWG^u_{n^{\rho}}$ starting from two randomly sampled vertices $u,v$ using disjoint collection of independent random variables for each shortest weight graph and such that the shortest weight graphs are disjoint. 
\end{prop}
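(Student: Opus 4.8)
The plan is to chain the couplings already furnished by Proposition \ref{couple_iid} together with one additional coupling, and then bound the total discrepancy by a union bound. First note that $\tau>3$ forces $\frac{\tau-2}{2\tau-3}<\tfrac12$, so $n^{\rho}=o(\sqrt n)$ and Proposition \ref{couple_iid} applies with $m_n=n^{\rho}$. By part (a), $\pr(R_{\mathrm{col}}\le n^{\rho})\to0$, so on a whp event the smallest-weight graph is a tree up to step $n^{\rho}$, whence $R_i=i$ and $(B_i)_{i=2}^{n^{\rho}}=(B_{R_i})_{i=2}^{n^{\rho}}$. By part (b), conditionally on $\vD$ there is a coupling of $(B_{R_i})_{i=2}^{n^{\rho}}$ with an i.i.d.\ sequence $(Y_i^{\sss(n)})_{i=2}^{n^{\rho}}$ of law $g^{(n)}$, the empirical size-biased pmf in \eqref{empirical_dist}, under which the two sequences agree whp. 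The only new ingredient is to pass from the i.i.d.\ $g^{(n)}$-sequence to an i.i.d.\ sequence $(B^{(ind)}_i)_{i=2}^{n^{\rho}}$ with the limiting law $g=\vq$; this is then glued to the previous couplings in the standard way, so that consecutive objects in the chain agree whp and hence so do the two extreme ones.

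For the new coupling I would use the maximal coupling of $g^{(n)}$ and $g$ applied independently across coordinates, which yields, conditionally on $\vD$,
\[
\pr_n\!\big(\exists\,i\in\{2,\dots,n^{\rho}\}:\,Y_i^{\sss(n)}\ne B^{(ind)}_i\big)\ \le\ n^{\rho}\,d_{\mathrm{TV}}\!\big(g^{(n)},g\big).
\]
Lemma \ref{tv_bound} guarantees that for each $\ep>0$ the event $\mathcal A_n=\{d_{\mathrm{TV}}(g^{(n)},g)\le n^{-(\frac{\tau-2}{2\tau-3}-\ep)}\}$ has probability tending to $1$. On $\mathcal A_n$ the right-hand side above is at most $n^{\rho-\frac{\tau-2}{2\tau-3}+\ep}$, which tends to $0$ once $\ep<\frac{\tau-2}{2\tau-3}-\rho$ is fixed; such an $\ep$ exists precisely because $\rho<\frac{\tau-2}{2\tau-3}$. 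Adding the four contributions to the probability of disagreement — the collision probability from part (a), the failure probability of the coupling in part (b), $\pr(\mathcal A_n^c)$, and the conditional bound $n^{\rho}d_{\mathrm{TV}}(g^{(n)},g)$ on the event $\mathcal A_n$ — and then taking expectation over $\vD$ shows $(B_i)_{i=2}^{n^{\rho}}=(B^{(ind)}_i)_{i=2}^{n^{\rho}}$ whp.

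For the two-source claim the only extra step is to decouple the two explorations. Proposition \ref{couple_iid}(c) gives $\pr(\min(R^v,\tilde{R}^u)\le n^{\rho})\to0$, so whp $\SWG^v_{n^{\rho}}$ and $\SWG^u_{n^{\rho}}$ are vertex-disjoint and neither re-uses an active stub of the other, and Proposition \ref{couple_iid}(d) provides the i.i.d.\ $g^{(n)}$-coupling of part (b) for both clusters simultaneously on disjoint families of driving variables. Repeating the argument above with a union over the (at most) $2n^{\rho}$ coordinates entering the maximal coupling yields the statement for both graphs at once. I expect the bulk of the write-up to be routine bookkeeping: the single quantitative point is the inequality $n^{\rho}\,d_{\mathrm{TV}}(g^{(n)},g)\to0$, which is exactly where the hypothesis $\rho<\frac{\tau-2}{2\tau-3}$ is used, and the reason the exponent cannot be improved by this argument.
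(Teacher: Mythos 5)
Your proposal is correct and follows essentially the same route as the paper: chain the whp coupling of the forward degrees to an i.i.d.\ $g^{(n)}$-sequence from Proposition \ref{couple_iid}(b) (resp.\ (c),(d) for two sources) with a coordinate-wise maximal coupling of $g^{(n)}$ and $\vq$, bounded by $n^{\rho}\,d_{\mathrm{TV}}(g^{(n)},\vq)\to 0$ via Lemma \ref{tv_bound} and the choice $\rho<\frac{\tau-2}{2\tau-3}$. Your extra bookkeeping about the no-collision event ($R_i=i$) just makes explicit what the paper leaves implicit.
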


\begin{proof}
We will describe the proof for one shortest weight graph; for two shortest weight graphs, one uses the same proof and Proposition \ref{couple_iid} (c) and (d). Let  $g^{(n)}=(g^{(n)}_k)_{k\geq 0}$ be defined as in \eqref{empirical_dist} with $\vd=\vD$.
As $\rho<1/2$ by definition, it follows from Lemma  \ref{couple_iid} that there exists an \emph{i.i.d.}~sequence  $\set{Y_i^{\sss(n)}: 2\leq i\leq n^{\rho}}$ with distribution $g^{(n)}$ such that the coupling $(B_{i})_{i=2}^{n^{\rho}}=(Y^{(n)}_i)_{2\leq i\leq n^{\rho}}$ holds whp.

It remains to show that the coupling $(Y^{(n)}_i)_{i=2}^{n^{\rho}}=(B^{(ind)}_{i})_{i=2}^{n^{\rho}}$ holds whp. For any $\rho<\frac{\tau-2}{2\tau-3}$, fix $\ep=(\frac{\tau-2}{2\tau-3}-\rho)/2$ and it follows from Lemma \ref{tv_bound} that
$$\pr( d_{\mathrm{TV}}(g^{(n)},g)\geq n^{-(\frac{\tau-2}{2\tau-3}-\ep)})=o(1).$$
That is, whp the realization of $\vD$ satisfies $d_{\mathrm{TV}}(g^{(n)},g)\leq n^{-(\frac{\tau-2}{2\tau-3}-\ep)}$. Given such $\vD$, we can apply a simple union bound to see that the coupling fails with probability at most
\begin{align*}
\pr\left( (Y^{(n)}_i)_{i=2}^{ n^{\rho}}\neq (B^{(ind)}_{i})_{i=2}^{n^{\rho}}\right)&\leq n^\rho d_{\mathrm{TV}}(g^{(n)},g)\leq n^{-\ep}.
\end{align*}
Therefore, the coupling $(B_{i})_{i=2}^{n^{\rho}}=(B^{(ind)}_{i})_{i=2}^{n^{\rho}}$ holds whp for any $\rho<\frac{\tau-2}{2\tau-3}$.

\end{proof}

We shall now state the main coupling result between the shortest weight graph and the associated age dependent branching process. We once again initially describe the coupling started from one randomly sampled vertex $v$ and then extending the coupling to two shortest weight clusters.  To summarize the main coupling results, we will need some notation. Recall that $\cG_n \sim \CM_n(\vp)$ denotes the underlying graph and $\mathcal{E}(\cG_n)$ denotes the corresponding edge set. Next, recall that $\{t_e: e\in \mathcal{E}(\cG_n)\}$ denoted the passage times across edges while $\set{T_n(u^\prime,v^\prime): u^\prime,v^\prime\in [n]}$ denoted the minimal inter-vertex passage times. For later use, the above easily extends to minimal passage times between sets: for $A, B \subseteq [n]$,  $T_n(A,B) = \min_{u\in A, v\in B} T_n(u,v)$.  Also recall the discrete skeleton from Def. \ref{SWG}.  To couple the SWG starting from vertex $v$ with this continuous time branching process, we will look at the subsequence $(\SWG^v_{\sigma_\ell})_{\ell\geq 1}$ defined recursively as follows. Let 
$$\mathcal{C}(v):=\{ u^\prime \in [n]: T_n{(u^\prime,v)}=0\},$$
denote the \emph{zero-weight cluster} containing the intial  vertex $v$, which consists of all the vertices that can be reached from $v$ \emph{instantaneously}. Recall from Construction \ref{SWG_on_CM} that for $m\geq 1$, $v_{m}$ denotes the vertex we add at the $m$-th step in the growth of $(\SWG^v_m)_{m\geq 1}$, i.e., $\SWG^v_{m}=\SWG^v_{m-1}\cup \{v_{m}\}$ for $m\geq 1$, where we write $v=v_1$ and $\SWG^v_0=\emptyset$ to unify notation. Next for $m\geq 2$ let $\cC(v_m)$ consist of the zero-weight cluster found by the shortest weight-graph (consisting of all vertices not yet included in the exploration process) when exploring from $v_m$ (after having finished exploration from $v_1, v_2, \ldots, v_{m-1}$).

Since we are eventually interested in connecting the above exploration to explosion times (or lack thereof) of the associated branching process $\widetilde\BP^*$ (with strictly positive life-lengths, see Section \ref{sec:adbp}), we will need to keep track of the subsequence of stopping times when $(\SWG^v_m)_{m\geq 1}$ only has stubs of positive weights emanating from it,  written as $(\sigma_\ell)_{\ell\geq 0}$ and constructed as follows. 
Let $\sigma_0=0$, and recursively define 
\beq\label{SWG_subseq}
\sigma_{\ell+1}=\sigma_\ell+|\mathcal{C}(v_{\sigma_\ell+1})| \quad \text{ for }\ell\geq 0.
\eeq
By this definition, $(\sigma_\ell)_{\ell\geq 1}$ are the times when we finish adding the vertices in a new zero-weight cluster to the SWG, and hence $(\SWG^v_{\sigma_\ell})_{\ell\geq 1}$  is a sequence of shortest weight graphs where every stub emanating from the graphs has strictly positive weight.

Recall the construction of the branching process in Construction \ref{comparison_BP} and the corresponding skeleton in Definition \ref{def:disc-skeleton} where the labelling is in the order of birth via (with $u_1$ denoting the root) as, $(u_1, u_2, \ldots, )$. Using the same notation as for the branching process let $\cC^{\BP}(u_i)$ denote the zero-weight cluster of $u_i$ namely the collection of all descendants that are born instantaneously. Analogous to \eqref{SWG_subseq} we can define a sequence of stopping times $(\Sigma_\ell)_{\ell\geq 0}$ for the branching process $\widetilde \BP$ with $\Sigma_0=0$ and
\beq\label{BP_subseq}
{\Sigma}_{\ell+1} ={\Sigma}_\ell+|\mathcal{C}^{\BP}(u_{\Sigma_\ell+1})| \quad \text{ for }\ell\geq 0.
\eeq
Note that for $\ell\geq 1$ the distribution of $|\mathcal{C}^{\BP}(u_{\Sigma_\ell+1})|$ is given by $\chi$ in Lemma \ref{size_cluster} since the sizes of these clusters involve \emph{i.i.d.}~forward degrees $(B_i)_{i\geq 2}$.

 Let $\theta_1=0$ and define recursively for $\ell\geq 1$,
\beq\label{stoppingtime_BP}
\theta_{\ell+1}=\inf\{ t>\theta_\ell: |\widetilde{\sf BP}(F,h)_{t}|>|\widetilde{\sf BP}(F,h)_{\theta_\ell}|\}.
\eeq
{We use these stopping times $(\theta_\ell)_{\ell\geq 0}$ to track the births that are not instantaneous.} Note that although the dead populations are different in $\widetilde{\sf BP}_{t}(F,h)$ and 
$\widetilde{\sf BP}_{t}(F^*,h^*)$, $(\theta_\ell)_{\ell \geq 0}$ can be equivalently defined with respect to $\widetilde{\sf BP}_{t}(F^*,h^*)$,
$$
\theta_{\ell+1}=\inf\{ t>\theta_\ell: |\widetilde{\sf BP}_t(F^*,h^*)|>|\widetilde{\sf BP}_{\theta_\ell}(F^*,h^*)|\}.
$$
Let 
\beq\label{explosion_time}
\theta_\infty:=\lim_{\ell\to\infty} \theta_\ell \in (0,\infty]
\eeq
denote the time for eventual completition of the branching process $\widetilde{\sf BP}_{t}(F^*,h^*)$ with $\theta_\infty < \infty$ {\bf if and only if} the branching process explodes within finite time.

\begin{prop}[Coupling between SWG and BP]\label{coupling_SWG_BP}
Fix any $\rho<\frac{\tau-2}{2\tau-3}$. First consider the shortest weight graph $\SWG^v$ from one randomly sampled vertex $v$.  
\begin{enumeratea}
\item There exists a coupling of the shortest weight graph process $\SWG$ with the discrete skeleton of $\widetilde{\BP}$ such that with high probability as $n\to\infty$,
 $$(\SWG^v_i)_{i\leq n^{\rho}}=(\widetilde{\BP}[i])_{i\leq n^{\rho}}.$$
 \item For any sequence $(\ell_n)_{n\geq 1}$ satisfying $\ell_n/n^{\rho/2}\to 0$, with high probability the stopping time $\sigma_{\ell_n}\ll n^{\rho}$. In particular there is a coupling of the subtree spanned by strictly positive life-lengths in $\SWG$ such that 
$$(\SWG^v_{i})_{1\leq i\leq \sigma_{\ell_n}}= (\widetilde{\sf BP}[i])_{1\leq i\leq \sigma_{\ell_n}}.$$

\item {For any sequence $(\ell_n)_{n\geq 1}$ satisfying $\ell_n/n^{\rho/2}\to 0$, with high probability we have the following coupling
$$\SWG^v_{\sigma_i}= \widetilde{\sf BP}_{\theta_{i}} \quad \text{ for all }i\leq \ell_n,$$}
where the objects on the left and right are viewed as random trees with random edge lengths. 
\item Now for two sources, the shortest weight clusters from two randomly sampled vertices $v,u$ we can couple with high probability the corresponding shortest weight graphs $\SWG^v, \SWG^u$ with associated {\bf independent} branching process $\widetilde{\BP}^v, \widetilde{\BP}^u$ such that the above assertions (a),(b), (c) hold and further $\SWG^v_{\sigma_{\ell_n}} \cap \SWG^u_{\sigma'_{\ell_n}}=\emptyset.$
\end{enumeratea}

\end{prop}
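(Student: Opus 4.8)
The plan is to build the coupling between the smallest-weight graph $\SWG^v$ and the modified age-dependent branching process $\widetilde{\BP}$ in three nested stages, translating each assertion of the proposition into a statement about how far the exploration can be carried before any of the approximations break down. Throughout, the governing constraint is the mesoscopic budget: by Proposition~\ref{couple_iid} we may explore up to $o(\sqrt n)$ half-edges before collisions become non-negligible, and by Proposition~\ref{coupling_forward_degrees} up to $n^\rho$ steps (with $\rho<\frac{\tau-2}{2\tau-3}<1/2$) before the forward degrees cease to look like an i.i.d.\ $\vq$-sequence.

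\textbf{Step 1 (part (a): discrete skeletons agree).} First I would invoke Proposition~\ref{coupling_forward_degrees} to couple $(B_i)_{i=2}^{n^\rho}$, the forward degrees encountered by $\SWG^v$, with an i.i.d.\ sequence $(B_i^{(ind)})_{i=2}^{n^\rho}$ of law $\vq$, equal with high probability; on the same event Proposition~\ref{couple_iid}(a) gives no collision up to step $n^\rho$. Given these two facts, the sequential rule defining $\SWG^v_{m+1}$ from $\SWG^v_m$ (pick the minimal-weight active stub, attach a fresh vertex, assign i.i.d.\ $F$-weights to its remaining stubs) is \emph{verbatim} the rule generating the discrete skeleton $\widetilde{\BP}[m]$ of Construction~\ref{comparison_BP}: the root has $D\sim\vp$ children, each subsequently explored vertex contributes $B_i\sim\vq$ further offspring, lifetimes/edge-weights are i.i.d.\ $F$, and ``process the minimal active weight next'' is exactly the birth-order labelling in the skeleton. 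So on the good event the two processes are literally the same for $i\le n^\rho$, giving (a).

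\textbf{Step 2 (parts (b) and (c): zero-weight clusters and the positive-lifetime subtree).} The subtlety is that $\sigma_{\ell_n}$ (and $\Sigma_{\ell_n}$) count \emph{total} vertices, not just the $\ell_n$ positive-lifetime ones, and a priori could exceed $n^\rho$. Here I would use Lemma~\ref{size_cluster}: each zero-weight cluster $|\cC(v_{\sigma_\ell+1})|\overset{d}{=}\chi$ has a $\tfrac12$-stable tail, $\pr(\chi\ge m)\asymp m^{-1/2}$, so $\sigma_{\ell_n}=\sum_{\ell=1}^{\ell_n}|\cC(v_{\sigma_\ell+1})|$ is a sum of $\ell_n$ i.i.d.\ copies of $\chi$ plus the root's cluster. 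A truncation-plus-union-bound argument (truncate each summand at $n^\rho$, whose complementary probability is $O(n^{-\rho/2})$, so $\ell_n\cdot O(n^{-\rho/2})\to0$ under $\ell_n=o(n^{\rho/2})$; on the truncated event the sum is $O_{\pr}(\ell_n^2)=o(n^\rho)$, using the heavy-tailed stable scaling $\sum\chi_i\asymp \ell_n^2$) shows $\sigma_{\ell_n}\ll n^\rho$ whp. Once we know all the action stays within the first $n^\rho$ steps, parts (b) and (c) are immediate from (a): restricting the coupled trees to the vertices reached by time $\sigma_{\ell_n}$, the subtree spanned by positive-lifetime edges in $\SWG^v$ equals $\widetilde{\BP}^*[i]$ for $i\le\sigma_{\ell_n}$, and re-attaching the life-lengths (which are part of the coupled data) upgrades the combinatorial identity to equality of edge-weighted random trees, i.e.\ $\SWG^v_{\sigma_i}=\widetilde{\BP}_{\theta_i}$ for $i\le\ell_n$ — here one matches $\sigma_i\leftrightarrow\Sigma_i$ and $\theta_i$ is precisely the $i$-th positive-lifetime birth epoch of $\widetilde\BP$.

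\textbf{Step 3 (part (d): two disjoint sources).} For this I would run the sequential two-source construction of Section~\ref{sec:proof-coupling}: grow $\SWG^v$ for $\sigma_{\ell_n}\ll n^\rho=o(\sqrt n)$ steps, then grow $\SWG^u$ using a \emph{fresh} collection of i.i.d.\ randomness. Proposition~\ref{couple_iid}(c) gives $\pr(\min(R^v,\widetilde R^u)\le n^\rho)\to0$, which simultaneously yields no internal collision in either cluster, that $u\notin\SWG^v_{\sigma_{\ell_n}}$, and that $\SWG^u$ never re-samples an active stub of $\SWG^v$ — the last being exactly the statement $\SWG^v_{\sigma_{\ell_n}}\cap\SWG^u_{\sigma'_{\ell_n}}=\emptyset$. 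On that event the second cluster evolves with the same transition rule as an \emph{independent} copy $\widetilde\BP^u$, so Steps~1--2 apply verbatim to it, and parts (a)--(c) hold jointly for both pairs with independent limiting branching processes.

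\textbf{Main obstacle.} The genuinely new estimate, beyond quoting Propositions~\ref{couple_iid} and \ref{coupling_forward_degrees}, is Step~2: controlling $\sigma_{\ell_n}$ when the per-cluster sizes are heavy-tailed ($\tfrac12$-stable), so that the natural scale is $\sigma_{\ell_n}\asymp\ell_n^2$ rather than $\ell_n$. This is precisely why the hypothesis is $\ell_n=o(n^{\rho/2})$ and not $o(n^\rho)$, and the truncation bound must be done with care so that both the ``some cluster is huge'' event and the ``typical total'' bound hold with probability $\to1$; everything else is bookkeeping of the fact that identical local rules produce identical processes on a good event.
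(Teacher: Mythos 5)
Your proposal is correct and follows essentially the same route as the paper: part (a) via the forward-degree coupling of Proposition~\ref{coupling_forward_degrees} (together with the no-collision bound), parts (b)--(c) by writing $\Sigma_{\ell_n}$ as a sum of $\ell_n$ i.i.d.\ cluster sizes with tail index $1/2$ from Lemma~\ref{size_cluster} so that $\Sigma_{\ell_n}=O_{\pr}(\ell_n^2)\ll n^{\rho}$, and part (d) via the two-source statements in Proposition~\ref{couple_iid}(c),(d). Your truncation-plus-Markov bound is a minor elementary variant of the paper's direct appeal to stable-law convergence, but the decomposition and key lemmas are the same.
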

\begin{proof}
Part(a) follows directly follows from Proposition \ref{coupling_forward_degrees}.  To prove (b) and (c) it is enough to show $\sigma_{\ell_n}\ll n^\rho$ with high probability. 

For $\ell \geq 1$, let 
$\Theta_\ell = |\mathcal{C}^{\BP}(u_{\sigma_\ell+1})|$ so that $\set{\Theta_i:i\geq 1}$ is an \emph{i.i.d.}~sequence with distribution $\chi$ as in Lemma \ref{size_cluster}. It follows from \eqref{BP_subseq} that, for $\ell \geq 1$, 
$${\Sigma}_{\ell} =|\mathcal{C}^{\BP}(u_{1})|+\sum_{i=1}^{\ell-1} \Theta_i.$$
 It is not difficult to see, from Lemma \ref{size_cluster}, that the distribution of $\set{\Theta_i:i\geq 1}$ is stochastically dominated by a collection of \emph{i.i.d.}~random variables $(\overline{X}_i)_{i\geq 1}$ with law
$$\pr( \overline{X}_i> x)=x^{-1/2}L(x),$$
for some slowly varying $L(\cdot)$. By standard weak convergence results to stable laws, see e.g. \cite[Theorem 3.8.2]{PTE5}, $\frac{\sum_{i=1}^{\ell-1}\overline X_i}{\ell^2}$ as $\ell \to \infty$ converges to a non-degenerate finite random variable with a stable law. It then follows that ${\Sigma}_{\ell_n} \ll n^{\rho}$ with high probability for $\ell_n\ll n^{\rho/2}$.
This, combined with the coupling in Proposition \ref{coupling_forward_degrees}, shows that with high probability the coupling between  $\SWG^v_i=\widetilde{\BP}[i]$ holds until $i\leq \Sigma_{\ell_n}$, on which the following coupling holds
$$(\sigma_i)_{i\leq \ell_n}=(\Sigma_i)_{i\leq \ell_n}.$$
This completes the proof.

\end{proof}
In subsequent proofs (see Section \ref{Step2}), we will pair the remaining free stubs, and it is crucial to have no prior knowledge of their weights. However, due to our construction, it is evident that $\SWG^v_{\sigma_{\ell_n}}$ only has stubs with positive weights emanating from it. To eliminate any issues arising from this prior knowledge, we take an additional step to pair all the stubs attached to $\SWG^v_{\sigma_{\ell_n}}$, which are conditioned to have strictly positive edge weights distributed as $F^* = F_{\zeta}$ as specified in \eqref{birth_time}. 

 Let the resulting cluster be denoted by $\overline{\SWG}^v_{\sigma_{\ell_n}} \subseteq [n]$, and define 
\beq\label{boundary}
\partial \SWG^v_{\sigma_{\ell_n}}=\overline{\SWG}^v_{\sigma_{\ell_n}}\backslash \SWG^v_{\sigma_{\ell_n}},
\eeq
to be the set of vertices lying at the boundary of $\SWG^v_{\sigma_{\ell_n}}$.

\begin{lem}\label{partial_SWG}
Let $\rho<\frac{\tau-2}{2\tau-3}$. For any $\eta \in (0,\rho)$ and a sequence $(\ell_n)_{n\geq 1}$ with $ \ell_n= \lfloor n^{\rho/2-\eta/4}\rfloor$, with high probability $n^{\rho-\eta}\ll |\partial \SWG^v_{\sigma_{\ell_n}}|\ll n^{\rho} $.
\end{lem}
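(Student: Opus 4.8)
The plan is to compute $|\partial \SWG^v_{\sigma_{\ell_n}}|$ by decomposing it according to the zero-weight clusters that have been added to the shortest weight graph. By Proposition \ref{coupling_SWG_BP}(c), with high probability $\SWG^v_{\sigma_{\ell_n}}$ coincides (as a tree with edge lengths) with $\widetilde{\sf BP}_{\theta_{\ell_n}}$, so it suffices to work with the branching process side. In the branching process, $\SWG^v_{\sigma_{\ell_n}}$ is built from $\ell_n$ zero-weight clusters: the root cluster $\mathcal{C}^{\BP}(u_1)$ and then, for $1\le i \le \ell_n-1$, the cluster $\mathcal{C}^{\BP}(u_{\Sigma_i+1})$ attached when exploring from the $i$-th positive-lifetime vertex. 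The boundary $\partial \SWG^v_{\sigma_{\ell_n}}$ consists of the endpoints of the positive-weight stubs emanating from all these clusters, i.e.\ (up to the $o_\pr(1)$ probability of a collision when pairing these stubs, controlled as in Proposition \ref{couple_iid}(a)) we have $|\partial \SWG^v_{\sigma_{\ell_n}}| = \sum_{i=1}^{\ell_n} Z_i$ where, for $i\ge 2$, $Z_i = S_{\chi}^{(i)}$ are i.i.d.\ copies of the number of positive-weight edges attached to a zero-weight cluster, whose tail is given by Lemma \ref{cluster_degree}: $\pr(Z_i \ge m) \asymp m^{-1/2}$, and $Z_1$ is a single copy of $|\widetilde{\sf BP}_{0+}|$-type quantity, negligible compared to the sum.

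The core of the argument is then a one-sided stable-law / extreme-value estimate for the sum $\Sigma := \sum_{i=1}^{\ell_n} Z_i$ of $\ell_n$ i.i.d.\ nonnegative variables with $\pr(Z \ge m) = m^{-1/2}L(m)$ for slowly varying $L$. Such a sum is in the domain of attraction of a $1/2$-stable law, so $\Sigma / \ell_n^{2}$ converges to a non-degenerate positive stable random variable (this is exactly the weak convergence already invoked in the proof of Proposition \ref{coupling_SWG_BP}, citing \cite[Theorem 3.8.2]{PTE5}). Since we want a whp two-sided bound rather than a distributional limit, I would argue directly: for the upper bound $\Sigma \ll n^\rho$, note $\ell_n^2 = n^{\rho - \eta/2} \ll n^\rho$, and tightness of $\Sigma/\ell_n^2$ (equivalently, $\pr(\Sigma \ge A\ell_n^2) \to 0$ as $A\to\infty$, uniformly in $n$) gives $\Sigma = O_\pr(\ell_n^2) = o_\pr(n^\rho)$. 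For the lower bound $\Sigma \gg n^{\rho-\eta}$, it is enough to observe that already a single large term suffices: $\pr(\max_{i\le \ell_n} Z_i \le t) = (1 - \pr(Z \ge t))^{\ell_n} \le \exp(-\ell_n \pr(Z\ge t))$, and choosing $t = n^{\rho-\eta}$ we get $\ell_n \pr(Z \ge t) \gtrsim n^{\rho/2 - \eta/4} \cdot n^{-(\rho-\eta)/2} = n^{\eta/4} \to \infty$, so whp some $Z_i \ge n^{\rho-\eta}$, whence $\Sigma \ge n^{\rho-\eta}$. (The exponent bookkeeping: $\ell_n = \lfloor n^{\rho/2 - \eta/4}\rfloor$, $\ell_n^2 \asymp n^{\rho - \eta/2}$, and $n^{\rho-\eta} \ll n^{\rho-\eta/2} \ll n^{\rho}$, so both inequalities are comfortably strict.)

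The main obstacle, and the only genuinely delicate point, is justifying that the boundary size is \emph{exactly} this i.i.d.\ sum — i.e.\ transferring from $\SWG$ to $\widetilde{\BP}$ and confirming that pairing the $\Theta_\pr(\ell_n^2)$ boundary stubs against the free stubs of $\CM_n(\vD)$ creates no collisions and no coincidences between the two sources. Since $\ell_n^2 \ll n^\rho = o(\sqrt n)$, this is covered by Proposition \ref{couple_iid}(a),(c) and the coupling in Proposition \ref{coupling_SWG_BP}(a)–(c), so it amounts to checking that the extra pairing step keeps the total number of half-edges used $o(\sqrt n)$ and invoking those results; I would state this carefully but not belabor it. A secondary technical nuisance is the slowly varying factor $L$: for the lower bound I should use that $L(t) t^{-1/2}$ is regularly varying of index $-1/2$, so $\ell_n \pr(Z \ge n^{\rho-\eta}) \asymp n^{\rho/2-\eta/4} \cdot n^{-(\rho-\eta)/2} L(n^{\rho-\eta}) = n^{\eta/4} L(n^{\rho-\eta})$, and since $L$ is slowly varying this still $\to \infty$ (any fixed negative power of $n$ beats $1/L$); for the upper bound I should quote a standard one-sided large-deviation bound for sums in the domain of attraction of a stable law (e.g.\ via truncation at level $\ell_n^2$ and a Markov/Chebyshev estimate on the truncated sum plus a union bound on exceedances), which again only needs $\ell_n^2 \ll n^\rho$ with room to spare.
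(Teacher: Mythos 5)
Your proposal is correct in substance and, for the reduction and the upper bound, follows the same route as the paper: transfer to the branching process via Proposition \ref{coupling_SWG_BP}, write the boundary as a sum of $\ell_n$ i.i.d.\ cluster-degree variables with tail $\asymp m^{-1/2}$ (Lemma \ref{cluster_degree}), and use the $1/2$-stable scaling $\ell_n^2\asymp n^{\rho-\eta/2}\ll n^{\rho}$ together with tightness (the paper invokes \cite[Theorem 3.8.2]{PTE5} after stochastically dominating by i.i.d.\ variables with regularly varying tails). Where you genuinely diverge is the lower bound: the paper again uses the stable limit, now for a stochastic lower bound $\underline{X}_i$, and then needs the extra fact that the limiting extremal $1/2$-stable law is diffuse with no atom at zero (citing \cite[Proposition 7.16]{kallenberg1997foundations}) to conclude $|\partial\widetilde{\BP}[\Sigma_{\ell_n}]|\gg n^{\rho-\eta}$; your single-big-jump/extreme-value argument ($\ell_n\pr(Z\ge t)\to\infty$ at $t=n^{\rho-\eta}$) is more elementary and avoids that input entirely. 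Two small points to tighten. First, the exact decomposition is not $\sum_{i\le\ell_n}Z_i$ but $|\partial\widetilde{\BP}[\Sigma_1]|+\sum_{i=1}^{\ell_n-1}(S^{(i)}-1)$, since each newly explored cluster consumes one boundary stub; this is harmless (it only helps the upper bound, and since $S_\chi=V_\chi-\chi+1\ge 1$ under (A1), all summands are nonnegative, so your max-term lower bound still goes through), but it should be stated. Second, as written your lower bound yields only $|\partial\SWG^v_{\sigma_{\ell_n}}|\ge n^{\rho-\eta}$ whp rather than the asserted $\gg n^{\rho-\eta}$; the $n^{\eta/4}$ slack in your computation fixes this immediately (take the threshold $t=n^{\rho-\eta}\log n$, say, or $t=n^{\rho-\eta+\eta/8}$), so it is a presentational rather than a substantive gap.
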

\begin{proof}
As before let  $(\widetilde{\BP}[m])_{m\geq 1}$ be the discrete skeleton of the associated age dependent branching process and the sizes of the zero-weight clusters respectively as in \eqref{BP_subseq}.  
By Proposition \ref{coupling_SWG_BP} it suffices to show $n^{\rho-\eta}\ll|\partial\widetilde{\BP}[{\Sigma_{\ell_n}}]|\ll n^{\rho}$ w.h.p.

Recall the definition of $S_\chi$ from Lemma \ref{cluster_degree} and let $(S^{(j)})_{j\geq 1}$ be \emph{i.i.d.}~copies of $S_\chi$ (note that $S^{(j)}$ can depend on $(B_i)_{i\geq 2}$ but each $S^{(j)}$ involves disjoint subsets of $(B_i)_{i\geq 2}$).
As $|\partial\widetilde{\BP}[{\Sigma_i}]|$ represents the number of children with positive life-lengths attached to $\BP[{\Sigma_i}]$, it satisfies
$$|\partial \widetilde{\BP}{[\Sigma_{i+1}]}|=|\partial \widetilde{\BP}{[\Sigma_{i}]}|-1+S^{(i)} \quad \text{ for }i\geq 1.$$
For any $\ell\geq 1$, we have the following characterization
$$|\partial \widetilde{\BP}{[\Sigma_{\ell}]}|=|\partial \widetilde{\BP}[{\Sigma_1}]|+\sum_{i=1}^{\ell-1} (S^{(i)}-1).$$
It follows from Lemma \ref{cluster_degree} that $(S^{(i)}-1)_{i\geq 1}$ satisfies the stochastic ordering
$$\underline{X}_i \leq_{st} S^{(i)}-1 \leq_{st} \overline{X}_i$$
for two collections of \emph{i.i.d.}~random variables $(\underline{X}_i)_{i\geq 1}$ and $(\overline{X}_i)_{i\geq 1}$ such that 
$$\pr( \underline{X}_i> x)=x^{-1/2} L(x) \quad \text{and}\quad \pr( \overline{X}_i> x)=x^{-1/2}\tilde L(x)$$
for some slowly varying $L(\cdot),\tilde L(\cdot)$. Theorem 3.8.2 in \cite{PTE5} implies that $\frac{\sum_{i=1}^{\ell-1}\overline X_i}{\ell^2}$ converges to a non-degenerate random variable and hence yields that $|\partial\widetilde{\BP}[{\Sigma_{\ell_n}}]|\ll n^{\rho}$ w.h.p. thus proving the upper bound. 

 For the lower bound, it again follows from Theorem 3.8.2 in \cite{PTE5} that $\frac{\sum_{i=1}^{\ell-1}\underline X_i}{\ell^2}$ converges to a non-degenerate random variable $Y$ with stable law. Further $Y$ has an \textit{extremal} stable distribution with characteristic parameter $\alpha=1/2$ and by \cite[Proposition 7.16, P159]{kallenberg1997foundations} has a diffuse distribution and thus no mass at zero. 
 The lower bound $|\partial\widetilde{\BP}[{\Sigma_{\ell_n}}]|\gg n^{\rho-\eta}$ follows.

\end{proof}

\section{Proof of Theorem \ref{thm:main}} \label{sec:PfTh1}

In this Section we prove Theorem \ref{thm:main}. The lower bound on passage time is established in Section \ref{sec:lwb}, followed by the proof of the upper bound in  Section \ref{sec:upb}. The remaining results are proven in the next Section.

\subsection{Lower bound on passage time}
\label{sec:lwb}

Recall the sequence of stopping times for the associated continuous time branching processes $\set{\theta_{\ell}: \ell\geq 0}$ defined in \eqref{stoppingtime_BP}. Recall that $T_n(u,v)$ denotes the passage time between the two randomly sampled vertices $u,v$. 

\begin{prop}[Lower bound on passage time]
\label{prop:lwb}
Fix $\ell_n =\lfloor n^{\rho/2-\eta/4}\rfloor$ for $0 < \eta < \rho/2$. Under the assumptions of Theorem \ref{thm:main}, there exists a probability space where the distribution of $T_n(u,v)$ can be coupled with two random variables $\theta_{\ell_n}^{(v)}, \theta_{\ell_n}^{(u)}$ such that:
\begin{enumeratei}
\item $\theta_{\ell_n}^{(v)}, \theta_{\ell_n}^{(u)}$ are independent random variables with distribution $\theta_{\ell_n}$ as in \eqref{stoppingtime_BP}. 
\item With high probability as $n\to\infty$, $T_n(u,v) \geq \theta_{\ell_n}^{(v)} + \theta_{\ell_n}^{(u)}$. 
\end{enumeratei}
\end{prop}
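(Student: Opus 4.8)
The plan is to realize the passage time $T_n(u,v)$ on the same probability space as the two independent branching processes $\widetilde{\sf BP}^{(v)}, \widetilde{\sf BP}^{(u)}$ produced by the coupling in Proposition~\ref{coupling_SWG_BP}(d), and then to argue that, before the clusters can merge, the flow from $v$ (respectively $u$) must already have ``survived'' at least $\ell_n$ of the non-instantaneous birth epochs of its branching process. Concretely, take $\rho < \tfrac{\tau-2}{2\tau-3}$ and $\ell_n = \lfloor n^{\rho/2 - \eta/4}\rfloor$; by Lemma~\ref{partial_SWG} and Proposition~\ref{coupling_SWG_BP}(d), with high probability the two smallest-weight graphs $\SWG^v_{\sigma_{\ell_n}}$ and $\SWG^u_{\sigma'_{\ell_n}}$ are disjoint, each is a tree, each agrees (as a tree with edge lengths) with $\widetilde{\sf BP}^{(v)}_{\theta_{\ell_n}^{(v)}}$ (resp.\ $\widetilde{\sf BP}^{(u)}_{\theta_{\ell_n}^{(u)}}$), and neither exploration has had a collision or hit the other's active stubs. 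On this high-probability event I will read off the conclusion.

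The key steps, in order, are: \textbf{(1)} Invoke Proposition~\ref{coupling_SWG_BP}(d) to obtain the coupling with two independent copies $\widetilde{\sf BP}^{(v)}, \widetilde{\sf BP}^{(u)}$, so that in particular $\theta^{(v)}_{\ell_n}, \theta^{(u)}_{\ell_n}$ are independent with the law of $\theta_{\ell_n}$ in \eqref{stoppingtime_BP}; this gives item (i) directly. \textbf{(2)} Observe that $\theta^{(v)}_{\ell_n}$ equals the time at which, running the flow from $v$ alone, the process has finished its $\ell_n$-th non-instantaneous birth, i.e.\ it is precisely the weight-radius of the ``plus-cluster'' $\SWG^v_{\sigma_{\ell_n}}$ — every vertex of $\SWG^v$ not in this cluster is separated from $v$ by total positive weight at least $\theta^{(v)}_{\ell_n}$ minus a negligible slack, because between consecutive $\theta_\ell$'s nothing is added. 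More carefully: any path in $\cG_n$ realizing $T_n(u,v)$ must leave $\SWG^v_{\sigma_{\ell_n}}$ through $\partial\SWG^v_{\sigma_{\ell_n}}$ and then, before reaching $u$, must leave $\SWG^u_{\sigma'_{\ell_n}}$ through its boundary; the portion of the optimal path inside $\SWG^v$ up to the boundary already costs at least $\theta^{(v)}_{\ell_n}$ by the definition of the SWG (the SWG explores in increasing order of arrival time, so every boundary vertex has arrival time $\ge \sigma_{\ell_n}$-th epoch), and symmetrically for $u$. \textbf{(3)} Use disjointness $\SWG^v_{\sigma_{\ell_n}}\cap\SWG^u_{\sigma'_{\ell_n}}=\emptyset$ from Proposition~\ref{coupling_SWG_BP}(d) and additivity of edge weights along a path to conclude $T_n(u,v) \ge \theta^{(v)}_{\ell_n} + \theta^{(u)}_{\ell_n}$ on the good event. \textbf{(4)} Collect the high-probability statements (no collision before $m_n = n^\rho$ steps from Proposition~\ref{couple_iid}(a),(c), the coupling of forward degrees from Proposition~\ref{coupling_forward_degrees}, the SWG–BP coupling of Proposition~\ref{coupling_SWG_BP}, and $\sigma_{\ell_n}\ll n^\rho$) and take their intersection; all hold whp, hence so does the conclusion in (ii).

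The main obstacle I expect is Step~(2): making precise that the flow from $v$ cannot reach $u$ ``cheaply'' through vertices that have \emph{not} yet been discovered by $\SWG^v_{\sigma_{\ell_n}}$. The point is that the SWG construction is exactly a Dijkstra-type exploration in order of increasing passage time from $v$, so at the moment the clusters are grown to $\sigma_{\ell_n}$ (resp.\ $\sigma'_{\ell_n}$) steps, every vertex outside $\SWG^v_{\sigma_{\ell_n}}$ has $T_n(v,\cdot)$ at least the weight-depth of that cluster, which by the coupling is $\theta^{(v)}_{\ell_n}$; and the optimal $v$-to-$u$ path, being a path, must pass through such a vertex before it can possibly reach $\SWG^u$. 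One has to be a little careful that the two sequential explorations do not interfere — this is exactly what the ``$\min(R^v,\tilde R^u)\le m_n$'' event in Proposition~\ref{couple_iid}(c) rules out whp — and that the boundary-pairing step used to define $\partial\SWG^v_{\sigma_{\ell_n}}$ does not create a shortcut, which it cannot since those stubs carry strictly positive weights with law $F_\zeta$. Once the bookkeeping of these whp events is organized, the inequality $T_n(u,v)\ge \theta^{(v)}_{\ell_n}+\theta^{(u)}_{\ell_n}$ falls out of path-additivity and cluster disjointness.
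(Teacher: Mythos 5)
Your proposal is correct and follows essentially the same route as the paper: invoke Proposition~\ref{coupling_SWG_BP}(d) to couple $\SWG^v,\SWG^u$ with two independent branching processes (giving item (i)), and use disjointness of $\SWG^v_{\sigma_{\ell_n}}$ and $\SWG^u_{\sigma'_{\ell_n}}$ together with the Dijkstra-type monotonicity of the exploration to conclude $T_n(u,v)\ge\theta^{(v)}_{\ell_n}+\theta^{(u)}_{\ell_n}$ on the high-probability coupling event. The only difference is that you spell out the path-splitting/additivity step that the paper leaves implicit, which is a faithful elaboration rather than a different argument.
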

The following is an immediate Corollary of the above Proposition. 
\begin{cor} 
\begin{enumeratea}
\item If the associated branching process {\bf does not} explode i.e., $\theta_\ell \uparrow \infty$ as $\ell \uparrow \infty$ then $T_n(u,v) \convp \infty$ as $n\to \infty$. 
\item If the associated branching processes explode in finite time, then for any $a> 0$, 
\[\liminf_{n\to \infty}\pr(T_n(u,v) \geq a) \geq \pr(\theta_\infty^{(u)}+ \theta_\infty^{(v)} \geq a),\]
where $\theta_\infty^{(u)}, \theta_\infty^{(v)}$ are independent copies of the explosion time. 
\end{enumeratea}

\end{cor}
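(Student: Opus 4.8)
\emph{The Corollary is immediate from Proposition~\ref{prop:lwb}}, so I will concentrate on the Proposition. Indeed, when the associated branching process is conservative, $\ell_n\to\infty$ forces $\theta^{(v)}_{\ell_n},\theta^{(u)}_{\ell_n}\uparrow\infty$ a.s., so Proposition~\ref{prop:lwb}(ii) gives $T_n(u,v)\convp\infty$; when it is explosive, $\theta^{(v)}_{\ell_n}+\theta^{(u)}_{\ell_n}\uparrow\theta^{(v)}_\infty+\theta^{(u)}_\infty$ a.s., so $\pr(\theta^{(v)}_{\ell_n}+\theta^{(u)}_{\ell_n}\ge a)\to\pr(\theta^{(v)}_\infty+\theta^{(u)}_\infty\ge a)$ by monotone convergence, and combining with Proposition~\ref{prop:lwb}(ii) yields the stated $\liminf$ bound.

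For Proposition~\ref{prop:lwb} itself, the plan is to run the two-source SWG/BP coupling of Section~\ref{sec:proof-coupling} and then reduce~(ii) to the elementary fact that any $v$--$u$ path must leave both flow clusters. First I would grow $\SWG^v$ and then $\SWG^u$ for $n^{\rho}$ steps each, $\rho$ as in Proposition~\ref{coupling_SWG_BP} (so $\rho<1/2$). Since $\ell_n=\lfloor n^{\rho/2-\eta/4}\rfloor$ satisfies $\ell_n/n^{\rho/2}\to0$, Proposition~\ref{coupling_SWG_BP} yields, on an event $\cE_n$ with $\pr(\cE_n)\to1$, \emph{independent} age-dependent branching processes $\widetilde{\BP}^{v},\widetilde{\BP}^{u}$ and a coupling under which, as trees with edge lengths,
$$\SWG^v_{\sigma_{\ell_n}}=\widetilde{\BP}^{v}_{\theta^{(v)}_{\ell_n}},\qquad \SWG^u_{\sigma_{\ell_n}}=\widetilde{\BP}^{u}_{\theta^{(u)}_{\ell_n}},$$
where $\theta^{(v)}_{\ell_n}$ (resp.\ $\theta^{(u)}_{\ell_n}$) is the functional~\eqref{stoppingtime_BP} of $\widetilde{\BP}^{v}$ (resp.\ $\widetilde{\BP}^{u}$), and $\sigma_{\ell_n}\ll n^{\rho}$ (the stopping times $\sigma_\ell$ attached to $\SWG^u$ being the obvious analogues). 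In particular $\theta^{(v)}_{\ell_n},\theta^{(u)}_{\ell_n}$ are independent, each with the law of $\theta_{\ell_n}$, so~(i) holds. Next, by Lemma~\ref{partial_SWG}, $|\partial\SWG^v_{\sigma_{\ell_n}}|\ll n^{\rho}$, so the closure $\overline{\SWG}^v_{\sigma_{\ell_n}}$ still has $o(\sqrt n)$ vertices; feeding this into the collision estimate behind Proposition~\ref{couple_iid}(c) upgrades the disjointness of Proposition~\ref{coupling_SWG_BP}(d) to $\overline{\SWG}^v_{\sigma_{\ell_n}}\cap\SWG^u_{\sigma_{\ell_n}}=\varnothing$ whp, which I add to $\cE_n$.

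On $\cE_n$ I would then use the deterministic ``radius'' property of the SWG: $\SWG^v_m$ is precisely the set of the $m$ vertices of smallest passage time from $v$, so $T_n(v,w)\ge T_n(v,v_{\sigma_{\ell_n}})$ for every $w\notin\SWG^v_{\sigma_{\ell_n}}$, and under the coupling the last-added vertex $v_{\sigma_{\ell_n}}$ lies in the $\ell_n$-th zero-weight cluster, which is reached at passage time exactly $\theta^{(v)}_{\ell_n}$; hence
$$T_n(v,w)\ge\theta^{(v)}_{\ell_n}\ \ (w\notin\SWG^v_{\sigma_{\ell_n}}),\qquad T_n(u,w)\ge\theta^{(u)}_{\ell_n}\ \ (w\notin\SWG^u_{\sigma_{\ell_n}}).$$
Now fix any $\mvpi=(v=w_0,w_1,\dots,w_k=u)\in\sP_n(v,u)$. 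Since $\overline{\SWG}^v_{\sigma_{\ell_n}}$ contains every $\cG_n$-neighbour of $\SWG^v_{\sigma_{\ell_n}}$ while missing $\SWG^u_{\sigma_{\ell_n}}\ni u$, there is a least $j\ge1$ with $w_j\notin\SWG^v_{\sigma_{\ell_n}}$; then $w_{j-1}\in\SWG^v_{\sigma_{\ell_n}}$ forces $w_j\in\overline{\SWG}^v_{\sigma_{\ell_n}}$, hence $w_j\notin\SWG^u_{\sigma_{\ell_n}}$ as well. Splitting $\mvpi$ at $w_j$ partitions its edge set, so
$$T(\mvpi)=T(\mvpi_{v\to w_j})+T(\mvpi_{w_j\to u})\ \ge\ T_n(v,w_j)+T_n(w_j,u)\ \ge\ \theta^{(v)}_{\ell_n}+\theta^{(u)}_{\ell_n},$$
and minimising over $\mvpi$ gives $T_n(u,v)\ge\theta^{(v)}_{\ell_n}+\theta^{(u)}_{\ell_n}$ on $\cE_n$, i.e.\ whp, which is~(ii).

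I expect the only genuine work to be in the coupling step: extracting from Proposition~\ref{coupling_SWG_BP} and Lemma~\ref{partial_SWG} the precise assertion that the two \emph{closed} flow clusters are disjoint whp (which is exactly what forces the first exit vertex from $\SWG^v_{\sigma_{\ell_n}}$ along a path to land strictly outside $\SWG^u_{\sigma_{\ell_n}}$, rather than jumping directly into it), and checking that the identification of edge-weighted trees really does pin the radius of $\SWG^v_{\sigma_{\ell_n}}$ at $\theta^{(v)}_{\ell_n}$ — a mismatch by a bounded number of indices would be harmless for the Corollary, but should be pinned down. The path-splitting bound and the independence in~(i) are then routine.
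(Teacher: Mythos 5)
Your argument is correct and takes essentially the same route as the paper: the Corollary is read off from Proposition \ref{prop:lwb} by letting $\ell_n\to\infty$ (conservative case giving $\theta_{\ell_n}\to\infty$, explosive case giving $\theta_{\ell_n}\uparrow\theta_\infty$), and the Proposition itself comes from the two-source coupling with independent branching processes and the whp disjointness in Proposition \ref{coupling_SWG_BP}(d). The only difference is that you make explicit the step the paper asserts in one line — that disjointness (of the closed clusters) together with the radius property of the SWG and path-splitting yields $T_n(u,v)\ge\theta^{(v)}_{\ell_n}+\theta^{(u)}_{\ell_n}$ — which is a faithful elaboration, consistent with the closure-disjointness the paper itself invokes in Step 1 of Section \ref{sec:upb}, rather than a different proof.
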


\begin{proof}[Proof of Proposition \ref{prop:lwb}:]
It follows from Proposition \ref{coupling_SWG_BP}(d) that there exists two independent modified age-dependent branching processes $\widetilde{\sf BP}^v, \widetilde{\sf BP}^u$ such that they can be coupled respectively with $\SWG^v,\SWG^u$ with high probability. To be more precise, let $(\theta^{(v)}_\ell)_{\ell\geq 0}$, respectively $(\theta^{(u)}_\ell)_{\ell\geq 0}$, denote the stopping times defined as in \eqref{stoppingtime_BP} for $\widetilde{\sf BP}^v$, respectively $\widetilde{\sf BP}^u$. Then with high probability we have 
$$ \widetilde{\sf BP}^{v}_{\theta^{(v)}_{i}}=\SWG^v_{\sigma_i}, \quad \widetilde{\sf BP}^{v}_{\theta^{(u)}_{i}}=\SWG^u_{\sigma'_i} \quad \text{ for all }i\leq \ell_n,$$
and further the two shortest weight graphs are disjoint up to this stage with high probability. Thus 
$$T_n(v,u)\geq \theta^{(v)}_{\ell_n} +\theta^{(u)}_{\ell_n} $$
for $\ell_n =\lfloor n^{\rho/2-\eta/4}\rfloor$. 

\end{proof}

\subsection{Upper bound on passage time}
\label{sec:upb}
The proof of the upper bound on the typical passage time $T_n(u,v)$ involves identifying a path that connects 
$u$ and $v$ via the largest zero-weight cluster in the graph. 
Throughout this Section we will choose $\rho=3/8$ and let $\eta>0$ be a sufficiently small number. We fix the choice of $\ell$ as
\beq\label{choice_ell}
 \ell=\ell_n=\lfloor n^{\rho/2-\eta/4} \rfloor.
\eeq
There are three main steps in the proof.

\subsubsection{{\bf Step 1: Growing the SWGs to a proper size}}
Let $(\sigma_j)_{j\geq 1}$ and $(\sigma'_j)_{j\geq 1}$, be stopping times  for $\SWG^v$ and $\SWG^u$ defined as in \eqref{SWG_subseq}. We will grow both SWGs till $\SWG^v_{\sigma_\ell}$ and $\SWG^u_{\sigma'_\ell}$, and construct 
 $\overline{\SWG}^v_{\sigma_\ell},\overline{\SWG}^u_{\sigma'_\ell}$.
 Proposition \ref{coupling_SWG_BP} shows that with high probability we can construct two independent branching process $\widetilde {\sf BP}^v$ and $\widetilde {\sf BP}^u$ following Construction \ref{comparison_BP} that are coupled to $(\SWG^v_{i})_{1\leq i\leq \sigma_{\ell_n}}$ and $(\SWG^u_{i})_{1\leq i\leq \sigma'_{\ell_n}}$ respectively and such that with high probability 
$\overline{\SWG}^v_{\sigma_\ell}\cap \overline{\SWG}^u_{\sigma'_\ell}=\emptyset.$

Fix $\eps> 0$ and define, 

$$S^\ep(v):=\{ x\in \partial \SWG^v_{\sigma_{ \ell}}: \exists w \in \SWG^v_{\sigma_{ \ell}} \text{ with }  t( x, w)\leq \ep\},$$
namely the set of vertices on the boundary that are connected to $\SWG^v_{\sigma_{ \ell}}$ with a direct edge of length $\ep$. Note that by construction, 
\[S^\ep(v) \subseteq \{ x\in \partial \SWG^v_{\sigma_{ \ell}}: T_n( x, \SWG^v_{\sigma_{ \ell}})\leq \ep\}. \]
 Next, to ease notation, write,
$$S^{\ep,c}(v):=\partial \SWG^v_{\sigma_{ \ell}}\backslash S^\ep(v),$$
for the ``complement'' of vertices in $\partial \SWG^v_{\sigma_{ \ell}}$ namely vertices in the boundary which do not have a \emph{direct} connection of length $\leq \eps$ to $\SWG^v_{\sigma_{ \ell}}$.
Define the ``degree'' of the set $S^\ep(v)$, denoted by $\deg(S^\ep(v))$, for the total number of stubs attached to vertices in $S^\ep(v)$. For any subset of vertices $A\subseteq S^\ep(v)$, similarly let $\deg(A)$ denote the total number of stubs attached to vertices in $A$.

\begin{lem}\label{boundary_size}
For any $\ep>0$ and $\eta\in (0,\rho)$, whp  
 $n^{\rho-\eta}\ll \deg(S^\ep(v)) \ll n^{\rho}$. 
\end{lem}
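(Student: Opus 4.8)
The plan is to read off both bounds from Lemma~\ref{partial_SWG}, which with the present choice $\ell=\ell_n=\lfloor n^{\rho/2-\eta/4}\rfloor$ already yields, with high probability, $n^{\rho-\eta}\ll |\partial \SWG^v_{\sigma_{\ell}}|\ll n^{\rho}$. Two elementary facts bridge the gap between $|\partial \SWG^v_{\sigma_{\ell}}|$ and $\deg(S^\ep(v))$: first, that the number of stubs attached to a set of boundary vertices is comparable, up to an $O_{\pr}(1)$ factor, to the cardinality of that set; and second, that $S^\ep(v)$ is, conditionally on $\overline{\SWG}^v_{\sigma_{\ell}}$, an independent Bernoulli thinning of $\partial \SWG^v_{\sigma_{\ell}}$ with a fixed strictly positive retention probability.

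For the first fact, I would note that the number of matched half-edges in the construction of $\overline{\SWG}^v_{\sigma_{\ell}}$ is $O_{\pr}(\sigma_\ell+|\partial\SWG^v_{\sigma_\ell}|)=o_{\pr}(n^{\rho})=o_{\pr}(\sqrt n)$ (using $\rho=3/8$ together with $\sigma_{\ell_n}\ll n^{\rho}$ from the proof of Proposition~\ref{coupling_SWG_BP} and Lemma~\ref{partial_SWG}), so by Proposition~\ref{couple_iid}(a) the cluster $\overline{\SWG}^v_{\sigma_{\ell}}$ is whp a tree, and by Proposition~\ref{coupling_forward_degrees} the forward degrees of all explored vertices, in particular of the boundary vertices adjoined while forming $\overline{\SWG}^v_{\sigma_{\ell}}$, are coupled whp to an i.i.d.\ sequence $(B_i)$ with the size-biased law $\vq$. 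Each $x\in\partial\SWG^v_{\sigma_\ell}$ then has degree $d_x=B_x+1\in[2,\infty)$ by (A1), so $2|\partial\SWG^v_{\sigma_\ell}|\le\deg(\partial\SWG^v_{\sigma_\ell})=\sum_{x\in\partial\SWG^v_{\sigma_\ell}}(B_x+1)$; since $\E[B]<\infty$ by (A2), a maximal version of the law of large numbers gives $\sup_{m\le n^{\rho}}\tfrac1m\sum_{i=1}^m(B_i+1)=O_{\pr}(1)$, whence $\deg(\partial\SWG^v_{\sigma_\ell})=O_{\pr}(|\partial\SWG^v_{\sigma_\ell}|)=o_{\pr}(n^{\rho})$. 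As $\deg(S^\ep(v))\le\deg(\partial\SWG^v_{\sigma_\ell})$, the upper bound $\deg(S^\ep(v))\ll n^{\rho}$ follows immediately.

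For the lower bound I would invoke the construction behind \eqref{boundary}: on the whp event that $\overline{\SWG}^v_{\sigma_\ell}$ is a tree, each $x\in\partial\SWG^v_{\sigma_\ell}$ is joined to $\SWG^v_{\sigma_\ell}$ by exactly one edge whose weight is, by \eqref{birth_time}, an independent draw from $F_\zeta$, independent of the degrees and of the remainder of $\overline{\SWG}^v_{\sigma_\ell}$. Hence, conditionally on $\overline{\SWG}^v_{\sigma_\ell}$, the variable $|S^\ep(v)|$ is $\mathrm{Binomial}(|\partial\SWG^v_{\sigma_\ell}|,p_\ep)$ with $p_\ep:=F_\zeta(\ep)$, and $p_\ep>0$ for every $\ep>0$: the min-summability hypothesis of Theorem~\ref{thm:main}, in force throughout Section~\ref{sec:upb}, forces $\inf\,\mathrm{supp}(\zeta)=0$, since otherwise the integrand in \eqref{PLexp} is bounded below by a positive constant near $0$ and the integral diverges. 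A Chernoff bound together with $|\partial\SWG^v_{\sigma_\ell}|\gg n^{\rho-\eta}$ then gives $|S^\ep(v)|\ge\tfrac12 p_\ep|\partial\SWG^v_{\sigma_\ell}|$ whp, and using $d_x\ge2$ once more, $\deg(S^\ep(v))\ge 2|S^\ep(v)|\gg n^{\rho-\eta}$.

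The step I expect to demand the most care is the independence asserted in the last paragraph --- that, conditionally on the vertex set of $\partial\SWG^v_{\sigma_\ell}$ together with all degrees and the tree structure, the weights of the edges joining $\partial\SWG^v_{\sigma_\ell}$ to $\SWG^v_{\sigma_\ell}$ are i.i.d.\ $F_\zeta$ and independent of those degrees. This is precisely what the bookkeeping of Construction~\ref{SWG_on_CM} and the auxiliary cluster $\overline{\SWG}^v_{\sigma_\ell}$ are designed to supply: the stubs emanating from $\SWG^v_{\sigma_\ell}$ receive independent $F$-weights before it is known which vertices they will be matched to, the ones surviving the exploration are precisely those conditioned to be positive and hence distributed as $F^*=F_\zeta$ by \eqref{birth_time}, and the matching of these surviving stubs reveals vertices whose degrees are independent of those weights.
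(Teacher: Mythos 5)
Your proof is correct and takes essentially the same route as the paper's: both rest on Lemma~\ref{partial_SWG} for the size of $\partial \SWG^v_{\sigma_{\ell}}$, on the coupling of the boundary forward degrees to an \emph{i.i.d.}\ size-biased sample so that $\deg$ is comparable to cardinality, and on the fact that $S^\ep(v)$ (stochastically) dominates a $\mathrm{Binomial}(|\partial \SWG^v_{\sigma_{\ell}}|, F^*(\ep))$ thinning of the boundary. The only differences are cosmetic: the paper argues via stochastic domination rather than the exact tree-conditional Binomial, and leaves implicit the details you spell out (positivity of $F_\zeta(\ep)$ forced by min-summability, and the law-of-large-numbers control of the degree sum).
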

\begin{proof}
Proposition \ref{coupling_forward_degrees} implies that w.h.p. we can construct $\SWG^v_{\sigma_{ \ell}}$ by sampling \emph{i.i.d.}~random variables $(B_i)_{i\geq 2}$ from the size-biased distribution $\vq$. Furthermore, it follows from a straightforward adaption of the proof of Proposition \ref{coupling_forward_degrees} that whp the forward degrees of vertices in $\partial \SWG^v_{\sigma_{ \ell}}$ can also be coupled with \emph{i.i.d.}~$(\tilde B_i)_{i\geq 1}$ from $\vq$. 
Note that since each vertex in the boundary has at lease one edge to $\SWG^v_{\sigma_{ \ell}}$, thus $S^\ep(v) \sustod \mathrm{Binomial}(\partial \SWG^v_{\sigma_{ \ell}},F^*(\ep))$ where $\partial \SWG^v_{\sigma_{ \ell}}\gg n^{\rho-\eta}$ according to Lemma \ref{partial_SWG} and as before $\sustod$ denotes the stochastic domination ordering. When the aforementioned coupling to \emph{i.i.d.}~forward degrees holds, we have 
$$ \deg(S^\ep(v)) =\sum_{x\in S^\ep(v)} \widetilde B_x.$$
It is then easy to see  $n^{\rho-\eta}\ll \deg(S^\ep(v)) \ll n^{\rho}$ with high probability. 
\end{proof}

\subsubsection{{\bf Step 2: The largest zero-weight cluster in the rest of the graph}}
\label{Step2}
\begin{construction}[Modified degree sequence $\widetilde \vd$]\label{modified_degree_construction}
The modified degree of each vertex in $[n]$ is defined to be its number of remaining free stubs after the construction of $\overline{\SWG}^v_{\sigma_\ell}\cap \overline{\SWG}^u_{\sigma'_\ell}$ in Step 1.  
\end{construction}

Note that the modified degrees of vertices in $ \SWG^v_{\sigma_{ \ell}}\cup  \SWG^u_{\sigma'_{ \ell}}$ are all zeros, and the modified degrees of vertices in $\partial \SWG^v_{\sigma_{ \ell}}\cup \partial \SWG^u_{\sigma'_{ \ell}}$ are \emph{i.i.d.}~samples from $\vq$ which have been revealed in Step 1. Throughout this section we will assume $\widetilde\vd$ satisfies 
\beq\label{event_dmax}
 \max_{x\in \partial \SWG^v_{\sigma_{ \ell}}\cup \partial \SWG^u_{\sigma'_{ \ell}}}\widetilde d (x) \leq n^{\rho/(\tau-2)}.
 \eeq
It is easy to see this condition holds with high probability based on Lemma \ref{partial_SWG}.
Next consider the unexplored vertices 
$$\cR_n:= [n]\setminus (\overline{\SWG}^v_{\sigma_\ell}\cap \overline{\SWG}^u_{\sigma'_\ell}).$$
Throughout this Section we will treat $|\cR_n|$ as known and condition on the event $|\cR_n|=n-o(n^{\rho})$, which occurs with high probability.
As a result of Proposition \ref{coupling_forward_degrees}, with high probability the modified degrees of vertices in $\cR_n$ are given by  \emph{i.i.d.}~samples from $\vp$, which are unknown to us yet.  

To obtain the zero-weight clusters in the rest of the graph, we essentially run a bond percolation on $\CM_n(\widetilde\vd)$ where each edge is independently retained with probability $p_c$ (otherwise the edge is deleted). We will denote the resulting bond percolation clusters by $\CM_n(\widetilde\vd,p_c)$. For each vertex, its \textit{thinned degree} is defined to be its induced degree in $\CM_n(\widetilde\vd,p_c)$, which represents the number of zero-weight edges attach to it. Let $\cC_{(1)}$ denote the largest cluster in $\CM_n(\widetilde\vd,p_c)$ and write $\cP^+(\cC_{(1)})$ for the number of positive-weighted stubs emanating from vertices in $\cC_{(1)}$. Our goal in this Step is to prove  
\beq\label{positive_stubs}
\cP^+(\cC_{(1)})=\Omega_{\pr}(n^{2/3}).
\eeq

This conclusion essentially follows from the results in \cite{dhara2017critical} for the critical percolation $\CM_n(\widetilde\vd,p_c)$. We begin by summarizing in Assumption \ref{modified_ass} the assumptions made in \cite{dhara2017critical} (i.e., \cite[Assumption 3.5]{dhara2017critical}). Note that the assumptions are stated for a fixed degree sequence whereas we are working with a random (modified) degree sequence $\widetilde\vd$. Hence we will show that $\widetilde\vd$ satisfies Assumption  \ref{modified_ass} with high probability.

\begin{ass}\label{modified_ass}
For a given degree sequence $\vd=(d_i)_{i\in [n]}$, let $D_n$ denote the degree of a uniformly chosen vertex in $[n]$. Suppose $D_n$ satisfies
\begin{enumeratei}
\item $D_n\convd D$;
\item $\E[ D_n^3]\to \E[D^3]$;
\item $$\nu_n:=\frac{\E[ D_n( D_n-1)]}{\E  D_n}\to \nu=\frac{\E[D(D-1)]}{\E D}>1;$$
\item (Critical window for percolation) $p_c=\frac{1}{\nu_n}+O(n^{-1/3})$.
\end{enumeratei}
\end{ass}

\begin{remark}\label{ass_clarification}
The assumptions above implicitly involve an infinite sequence $\vd=(d_i)_{i\geq 1}$, whereas we are working with a sequence of (random) modified degrees $\{\widetilde \vd^{(n)}:n\geq 1\}$ where $\widetilde \vd^{(n)}=(\widetilde d^{(n)}_i)_{i\in [n]}$ is defined for $\CM_n(\vD)$. To avoid any ambiguity, we will perform the following construction. First sample $\vd:=(d_i)_{i\geq 1}$ where $d_i\sim_{i.i.d.}\vp$. Based on the discussion above, for each $n\geq 1$ we can couple $(\widetilde d^{(n)}_u)_{u\in \cR_n}$ to the first $|\cR_n|$ terms in $\vd$, i.e.,  $(\widetilde d^{(n)}_u)_{u\in \cR_n}=(d_i)_{i\leq |\cR_n|}$. 
\end{remark}

\begin{lemma}
Let $\{\widetilde \vd^{(n)}: n\geq 1\}$ denote the collection of modified degree sequences in $\{ \CM_n(\vD): n\geq 1\}$. With high probability $\{\widetilde \vd^{(n)}: n\geq 1\}$ satisfies Assumption \ref{modified_ass}.
\end{lemma}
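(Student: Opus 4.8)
The plan is to verify the four conditions of Assumption \ref{modified_ass} for the random sequence $\widetilde\vd^{(n)}$ by splitting $[n]$ into three parts of very different sizes. Write $\cI_n := \SWG^v_{\sigma_{\ell_n}}\cup \SWG^u_{\sigma'_{\ell_n}}$ for the interior (zero-weight) vertices, $\cB_n := \partial\SWG^v_{\sigma_{\ell_n}}\cup\partial\SWG^u_{\sigma'_{\ell_n}}$ for the boundary, and $\cR_n$ for the remaining (unexplored) vertices. By Construction \ref{modified_degree_construction} every vertex in $\cI_n$ has modified degree $0$; by Proposition \ref{coupling_SWG_BP}(b) and Lemma \ref{partial_SWG}, whp $|\cI_n|,|\cB_n|=o(n^{\rho})$, so that $|\cR_n|=n-o(n^{\rho})$, and whp the modified degrees of $\cB_n$ are i.i.d.\ samples from $\vq$ that are all at most $n^{\rho/(\tau-2)}$ (the event \eqref{event_dmax}); and by Proposition \ref{coupling_forward_degrees} together with Remark \ref{ass_clarification}, whp the modified degrees of $\cR_n$ coincide with the prefix $(d_i)_{i\le|\cR_n|}$ of an i.i.d.\ $\vp$-sequence. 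I will use repeatedly that size-biasing raises moments by one, so for $0\le\eta'\le\eta$ one has $\E[\vq^{2+\eta'}]\le\E[D^{3+\eta'}]/\E[D]<\infty$ by Assumption \ref{ass}(A2); in particular $\E[\vq^{2}]<\infty$.

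Conditions (i)--(iii) are purely asymptotic and I would dispatch them first. For (i), the empirical pmf of $\widetilde\vd^{(n)}$ is $\tfrac{|\cR_n|}{n}$ times the empirical pmf of an i.i.d.\ $\vp$-sample plus a correction of total mass $o(n^{\rho-1})$; the law of large numbers and $|\cR_n|/n\to1$ then give $D_n\convd D$. For (ii) and (iii) one must show $\tfrac1n\sum_i(\widetilde d_i^{(n)})^k$ has the right limit for $k\le3$. The $\cR_n$-contribution equals $\tfrac1n\sum_{i\le|\cR_n|}d_i^k$, which converges to $\E[D^k]$ by the strong law (using $\E[D^3]<\infty$) once one checks, by a first-moment bound, that deleting $o(n^\rho)$ i.i.d.\ terms changes $\tfrac1n\sum_{i\le n}d_i^k$ by only $o_{\pr}(n^{\rho-1})$. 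The $\cB_n$-contribution is, for $k\le2$, $\tfrac1n$ times a sum of $o(n^\rho)$ i.i.d.\ $\vq$-variables of finite mean, hence $o_{\pr}(n^{\rho-1})$; for $k=3$ it is at most $\tfrac1n(\max_{\cB_n}\widetilde d_i)^{1-\eta/2}\sum_{\cB_n}\widetilde d_i^{\,2+\eta/2}\le n^{\rho(1-\eta/2)/(\tau-2)+\rho-1}O_{\pr}(1)$, whose exponent is $<2\rho-1<0$. The $\cI_n$-contribution is $0$. This yields (ii), (iii), and in particular $\nu_n\to\nu$.

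The heart of the matter is condition (iv), which demands the \emph{rate} $p_c=1/\nu_n+O(n^{-1/3})$. Since $p_c=1/\nu$ and, by the above, $\nu_n$ is bounded away from $0$ whp, this is equivalent to $\nu_n-\nu=O_{\pr}(n^{-1/3})$, and writing $\nu_n=\big(\tfrac1n\sum_i\widetilde d_i^{(n)}(\widetilde d_i^{(n)}-1)\big)\big/\big(\tfrac1n\sum_i\widetilde d_i^{(n)}\big)$ and expanding the quotient reduces this to showing the empirical first and second moments are within $o_{\pr}(n^{-1/3})$ of $\E[D]$ and $\E[D(D-1)]$. For the first moment the $\cR_n$-part is $\E[D]+O_{\pr}(n^{-1/2})$ by the classical CLT (finite variance, since $\E[D^2]<\infty$) and the $\cB_n$-part is $O_{\pr}(n^{\rho-1})$; both are $o_{\pr}(n^{-1/3})$. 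For the second moment the $\cB_n$-part is again $O_{\pr}(n^{\rho-1})$ using $\E[\vq^2]<\infty$, so the only delicate term is $\tfrac1n\sum_{i\le|\cR_n|}d_i(d_i-1)$: here $D(D-1)$ need not have a finite fourth moment, so the CLT rate $n^{-1/2}$ is unavailable. The key point is that $D(D-1)$ has at least $p:=(3+\eta)/2>3/2$ finite moments, so the Marcinkiewicz--Zygmund strong law gives $\tfrac1N\sum_{i\le N}\big(d_i(d_i-1)-\E[D(D-1)]\big)=o(N^{1/p-1})$ a.s., with $1/p-1=-(1+\eta)/(3+\eta)<-1/3$; combined with $|\cR_n|=n-o(n^\rho)$ this gives $\tfrac1n\sum_{i\le|\cR_n|}d_i(d_i-1)=\E[D(D-1)]+o_{\pr}(n^{-1/3})$. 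Hence $\nu_n-\nu=o_{\pr}(n^{-1/3})$ and (iv) follows.

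I expect this last step to be the only genuinely delicate part: one must (a) extract an $o(n^{-1/3})$ bound on the fluctuation of the empirical second moment of the degrees from only the $(3+\eta)$-th moment hypothesis (A2) — the gain $\eta>0$ is exactly what pushes the Marcinkiewicz--Zygmund exponent past $-1/3$ — and (b) ensure that the $\vD$-dependent, random choice of the removed set $\cI_n\cup\cB_n$ does not destroy the i.i.d.\ structure of the degrees in $\cR_n$; but (b) is precisely what the already-established coupling of Proposition \ref{coupling_forward_degrees} and Remark \ref{ass_clarification} provides, so it only has to be invoked, not re-proved. Conditions (i)--(iii) are routine by comparison.
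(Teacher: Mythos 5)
Your proposal is correct and follows essentially the same route as the paper: condition on the coupling of $\cR_n$-degrees to an i.i.d.\ $\vp$-prefix and on \eqref{event_dmax}, verify (i)--(iii) by the law of large numbers plus crude bounds on the $o(n^{\rho})$ explored/boundary vertices, and obtain the $O(n^{-1/3})$ rate in (iv) from the extra $\eta$ moments in (A2). The only cosmetic difference is that you invoke the CLT and the Marcinkiewicz--Zygmund strong law directly for the empirical first and second moments, whereas the paper cites a rate-of-convergence result (Corollary 3 of \cite{tang2022convergence} with $\alpha=1$ and $\alpha=(1+\eta)/2$) to the same effect.
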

\begin{proof}
Without loss of generality we can condition on \eqref{event_dmax} and treat $|\cR_n|$ as a known constant satisfying $|\cR_n|=n-o(n^\rho)$. Following Remark \ref{ass_clarification}, we define $\vd=(d_i)_{i\geq 1}$ so that the coupling $(d_i)_{i\leq |\cR_n|}=(\widetilde d^{(n)}_u)_{u\in \cR_n}$ holds for all $n\geq 1$. For simplicity, we will omit the superscript $(n)$ moving forward, as the dependence on $\CM_n(\vD)$ is clear.

Let $\widetilde D_n$ denote the modified degree $\widetilde d_U$ of a uniformly chosen vertex $U\in [n]$, whereas let $D_n$ denote the modified degree $\widetilde d_{U'}$ of a uniformly chosen vertex $U'\in \cR_n$. Due to the coupling $(\widetilde d^{(n)}_u)_{u\in \cR_n}=(d_i)_{i\leq |\cR_n|}$, $D_n$ has the law of a uniform sample from $(d_i)_{i\leq |\cR_n|}$.

We begin by considering the convergence of $D_n$. Strong law of large numbers immediately implies that Assumption \ref{modified_ass} (i)-(iii) are satisfied for $D_n$ almost surely. In particular, we have $\E[D_n^j]\to \E[D^j]$ almost surely for $1\leq j\leq 3$, noting that $\E[D_n^j]:=\frac{ \sum_{i\leq |\cR_n|} d^j_i}{|\cR_n|}$ is a random variable arising from the randomness of $(d_i)_{i \leq |\cR_n|} \sim_{i.i.d.} \vp$. Furthermore, we'd like to show that there exists $n_0\in\mathbb{N}$ and $C_{1},C_{2}>0$, such that with high probability we have
\beq\label{empirical_concentration}
\big|\E D_n-\E D\big|\leq C_{1}n^{-1/3}\quad \text{ and }\quad \big|\E[D_n^2]-\E[D^2]\big|\leq C_{2}n^{-1/3}
\eeq
for all $n\geq n_0$. These two conclusions follow from applying Corollary 3 of \cite{tang2022convergence} with $X_k=\widetilde d_k$, $\alpha=1$, and respectively with $X_k=(\widetilde d_k)^2$, $\alpha=(1+\eta)/2$ for $\eta$ as in Assumption \ref{ass} (A2).

We are now ready to verify Assumption \ref{modified_ass} for $\widetilde D_n$.
It is straightforward to see that $\widetilde D_n$ satisfies Assumption \ref{modified_ass} (i) whp since $D_n$ satisfies it almost surely. To prove (ii), observe that conditioning on \eqref{event_dmax},
\beq\label{empirical_comparison}
\frac{|\cR_n|}{n}\cdot \E[D_n^3]\leq \E[\widetilde D_n^3]\leq \frac{|\cR_n|}{n} \cdot \E[D_n^3]+\frac{O(n^{\rho})}{n} (n^{\rho/(\tau-2)})^3,
\eeq
and it follows that  $\E[\widetilde D_n^3]\to \E[D^3]$ whp by the choice of $\tau>4, \rho=3/8$. Based on the same reasoning, one has $\E[\widetilde D_n^2]\to \E[D^2], \E[\widetilde D_n]\to \E D$ as $\E[D_n^2]\to \E[D^2], \E[D_n]\to \E[D]$. Hence,
$$\widetilde\nu_n:=\frac{\E[\widetilde D_n(\widetilde D_n-1)]}{\E \widetilde D_n}\to \nu=\frac{\E[D(D-1)]}{\E D}>1,$$
verifying (iii) for $\widetilde D_n$.

Lastly we need to show with high probability  $\CM_n(\widetilde\vd,p_c)$ belongs to the critical window, i.e., 
$$|p_c-\frac{1}{\widetilde\nu_n}|=O(n^{-1/3}).$$
It suffices to show $|\widetilde\nu_n-\nu|=O(n^{-1/3})$. The triangle inequality implies
\begin{align*}
|\widetilde\nu_n-\nu|&\leq \bigg|\widetilde \nu_n-\frac{\E[ \widetilde D_n(\widetilde D_n-1)]}{\E D}\bigg|+\bigg|\frac{\E[ \widetilde D_n(\widetilde D_n-1)]}{\E D}-\frac{ \E(D(D-1))}{\E D}\bigg|\\
&=O( |\E \widetilde D_n-\E D|)+O( |\E [\widetilde D^2_n]-\E [D^2]|).
\end{align*}
It follows from the same reasoning as in \eqref{empirical_comparison} that $\E \widetilde D_n^2=\E D_n^2+O(n^{\rho-1})$ and $\E \widetilde D_n=\E D_n+O(n^{\rho-1})$ whp. Combined with \eqref{empirical_concentration}, we have that whp
\begin{align*}
|\E \widetilde D_n-\E D|&\leq C_{1} n^{-1/3}+O(n^{1-\rho})=O(n^{-1/3})\\
|\E [\widetilde D^2_n]-\E [D^2]|&\leq C_{2}n^{-1/3}+O(n^{1-\rho})=O(n^{-1/3}),
\end{align*}
thus proving that $|\widetilde\nu_n-\nu|=O(n^{-1/3})$.
\end{proof}
Having verified that whp $\CM_n(\widetilde\vd,p_c)$ satisfies the assumptions in \cite{dhara2017critical} for critical percolation, we now apply their results to complete the proof of \eqref{positive_stubs}.

\noindent
\textit{Proof of \eqref{positive_stubs}.}
Let $N_k(\cC_{(1)})$ be the number of vertices in $\cC_{(1)}$ with thinned degree $k$.  It is straightforward to observe that 
$$\cP^+(\cC_{(1)})\geq N_1(\cC_{(1)})$$
as each vertex with thinned degree 1 has at least one positive-weight stub attach to it.  It follows from Theorem 3.6 of \cite{dhara2017critical} that $|\cC_{(1)}|=\Omega_{\pr}(n^{2/3})$ and Eqn. (6.4) that 
$$N_1(\cC_{(1)})=\Omega_{\pr}(|\cC_{(1)}|)=\Omega_{\pr}(n^{2/3}).$$

\qed

\subsubsection{{\bf Step 3: Small connection time between  the shortest weight clusters and the largest zero-weight cluster }}
Without loss of generality we only need to show that with high probability the passage time between $\SWG^v_{\sigma_{ \ell}}$ and $\cC_{(1)}$ is at most $\ep$ for arbitrarily small $\ep>0$. The conclusion for $\SWG^u_{\sigma'_{ \ell}}$ can be derived analogously and hence w.h.p.
$$T_n(u,v)\leq \theta^{(v)}_{\infty}+\theta^{(u)}_{\infty}+2\ep.$$

\begin{prop}
Let $\rho=3/8$ and $\ell=\lfloor n^{\rho/2-\eta/4} \rfloor$. For any $\ep>0$ and $w\in \{u,v\}$ there exists a path of weight less than $\ep$ that connects $\SWG^w_{\sigma_{ \ell}}$ to $\CC_{(1)}$ with high probability.
\end{prop}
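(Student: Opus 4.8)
The plan is to build, with high probability, a two-edge path from $\SWG^w_{\sigma_\ell}$ into $\cC_{(1)}$. Fix $\varepsilon' = \varepsilon/2$. By the definition of $S^{\varepsilon'}(w)$, each of its vertices is joined to $\SWG^w_{\sigma_\ell}$ by a single edge of weight $\le \varepsilon'$, so it suffices to exhibit, whp, a vertex $x \in S^{\varepsilon'}(w)$ carrying an edge of weight $< \varepsilon'$ straight to a vertex of $\cC_{(1)}$; the resulting path $\SWG^w_{\sigma_\ell} \to x \to \cC_{(1)}$ then has weight $< 2\varepsilon' = \varepsilon$. (If $S^{\varepsilon'}(w) \cap \cC_{(1)} \neq \emptyset$ we are already done, so assume otherwise.) Two quantitative inputs are in hand: Lemma \ref{boundary_size} gives $\deg(S^{\varepsilon'}(w)) \gg n^{\rho-\eta}$ free stubs emanating from $S^{\varepsilon'}(w)$, while \eqref{positive_stubs} gives $\cP^+(\cC_{(1)}) \gg n^{2/3}$ positive-weight stubs emanating from $\cC_{(1)}$.

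Next I would establish a counting inequality. Write $A$ for the set of free stubs of $S^{\varepsilon'}(w)$ and $B$ for the set of positive stubs of $\cC_{(1)}$, which are disjoint under the assumption above. With $\rho = 3/8$ we have $|A|\,|B| \gg n^{\rho - \eta + 2/3} = n^{1 + 1/24 - \eta}$, which for small $\eta$ exceeds the number $\Theta_{\pr}(n)$ of stubs still to be paired after Step 1 by a factor $\gg n^{1/24 - \eta} \to \infty$. Consequently, pairing the stubs of $A$ sequentially into the uniform configuration-model matching, each of them lands on a stub of $B$ with probability $\gtrsim |B|/n \gtrsim n^{-1/3}$ (and since at most $\ll n^{\rho} = o(n^{2/3})$ stubs of $B$ are ever consumed, $|B|$ stays of order $n^{2/3}$ throughout); moreover each resulting $A$--$B$ edge is a positive-weight edge whose weight is an independent draw from $F_\zeta$, and since $F_\zeta$ is min-summable one has $\inf\,\mathrm{supp}(\zeta) = 0$ (otherwise all life-lengths of $\widetilde{\BP}^*$ would be bounded below and the process conservative), hence $F_\zeta(\varepsilon') > 0$. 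Lower-bounding by independent Bernoulli trials, the probability that no $A$--$B$ edge of weight $< \varepsilon'$ is created is at most $(1 - c\,n^{-1/3})^{|A|} \le \exp(-c'\,n^{1/24 - \eta}) \to 0$. The $A$-endpoint of such an edge is the desired $x$.

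The step I expect to require the most care --- the main obstacle --- is the \emph{order of revelation}: $\cC_{(1)}$ is a cluster of the percolation $\CM_n(\widetilde\vd, p_c)$, whose component structure depends on all the pairings, including those of the stubs in $A$, so a priori $A$, $B$ and $\cC_{(1)}$ are not independent. I would handle this by the sequential-exploration technique of \cite{BHH10b,BHH17}: first run the percolation cluster exploration to expose $\cC_{(1)}$ together with its set $B$ of dangling positive stubs, \emph{without} pairing any stub of $S^{\varepsilon'}(w)$ during this phase (should the exploration ever traverse a zero-weight edge incident to $S^{\varepsilon'}(w)$, that boundary vertex lies in $\cC_{(1)}$ and the claim is trivial); only afterwards pair the residual stubs---among which $A$ and $B$ sit inside a set of size $\Theta_{\pr}(n)$---uniformly at random and assign the new edge weights. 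Since $|S^{\varepsilon'}(w)| \le |\partial\SWG^w_{\sigma_\ell}| \ll n^{\rho} = o(n^{2/3})$, withholding its stubs perturbs the modified degree sequence $\widetilde\vd$ negligibly and does not spoil the $\Omega_{\pr}(n^{2/3})$ lower bounds for $|\cC_{(1)}|$ and $\cP^+(\cC_{(1)})$ coming from \cite{dhara2017critical}---the same robustness already used when verifying Assumption \ref{modified_ass}. With this ordering the pairing of $A$ against the residual stub set is genuinely uniform and the computation of the preceding paragraph goes through verbatim. Running the argument for $w = v$ and, symmetrically, $w = u$, and combining with the disjoint coupling of Proposition \ref{coupling_SWG_BP}(d), then gives $T_n(u,v) \le \theta^{(v)}_\infty + \theta^{(u)}_\infty + \varepsilon$ whp, which on letting $\varepsilon \downarrow 0$ finishes the upper bound in Theorem \ref{thm:main}.
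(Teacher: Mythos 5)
Your proposal is correct and follows essentially the same route as the paper's proof: a two-edge connection through $S^{\ep}(w)$, using Lemma \ref{boundary_size} for the $\gg n^{\rho-\eta}$ stubs attached to that set, the bound \eqref{positive_stubs} for the $\Omega_{\pr}(n^{2/3})$ positive stubs on $\cC_{(1)}$, and the failure estimate $(1-c\,n^{-1/3})^{\Omega(n^{\rho-\eta})}=o(1)$ with $\rho=3/8$. The only differences are implementation details: you pair the boundary stubs sequentially with fresh draws of the edge weight (and make the revelation-order issue explicit), whereas the paper decouples the event of a small edge weight via independent $\sqrt{F^*(\ep)}$-good stub marks and disposes separately of the case in which a zero stub of $S^{\ep}(v)$ has already been used to form $\cC_{(1)}$.
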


\begin{proof}
First note that if one of the zero stubs attached to $S^\ep(v)$ has been used to form $\cC_{(1)}$, then this immediately  implies that $\SWG^v_{\sigma_{ \ell}}$ is connected to $\cC_{(1)}$ within time $\ep$.

Otherwise we will pair the positive stubs uniformly at random. For each positive stub we flip an independent coin with success probability $\sqrt{F^*(\ep)}$ and say a positive stub is $\ep$-good if we have a success. Our goal is to show w.h.p. there exists an $\ep$-good edge between $S^\ep(v)$ and $\cC_{(1)}$ so that $\SWG^v_{\sigma_{ \ell}}$ is connected to $\cC_{(1)}$ within time $2\ep$. It follows from \eqref{positive_stubs} that there are $\Omega_{\pr}(n^{2/3})$ positive stubs attached to $\cC_{(1)}$. 
By standard large deviations for Binomial distribution we see that with high probability $S^\ep(v)$ has $\Omega_{\pr}(n^{\rho-\eta})$ $\ep$-good stubs and $\cC_{(1)}$ has $\Omega_{\pr}(n^{2/3})$  $\ep$-good stubs.

Let $L_n$ denote the total number of positive stubs, and note that $L_n\asymp n$. The probability that none of the $\ep$-good stubs of $S^\ep(v)$'s is paired to that of $\cC_{(1)}$'s is at most 
$$\left(1-\frac{\Omega(n^{2/3})}{L_n}\right)^{\Omega(n^{\rho-\eta})}\asymp e^{-\Omega(n^{\rho-\eta-1/3})}=o(1)$$
since $\rho=3/8$.
Thus whp, for any $\eps>0$, there exists a path from $u,v$ with passage time $\leq \theta_{\ell_n}^{(u)} + \theta_{\ell_n}^{(v)} + 2\eps +o_{\pr}(1)$. This completes the proof.

\end{proof}

\section{Proofs of the remaining results}
\label{sec:pf-remaining}
Here we prove the rest of the results in the paper as well provide a proof idea for Conjecture \ref{lb_passage_time}. 

\subsection{Proof of Corollaries \ref{cor:examples}-\ref{dexp}}
\label{sec:pf-corr}

\noindent
\textit{Proof of Corollary \ref{cor:examples}.} The results follow from Theorem \ref{thm:main} by verifying that the edge distributions are min-summable in (a)-(c). For $a,b>0$, let $F_a(\cdot), G_b(\cdot)$ be defined as in \eqref{eqn:Fa-def}, \eqref{eqn:Gb-def}. Let  the positive part of $F_a(\cdot)$ be denoted by
$$F_{a,+}(x)=\frac{F_a(x)-F_a(0)}{1-F_a(0)}=\frac{x^a}{1-p_c} \quad \text{ for }0\leq x^a\leq 1-p_c.$$
Thus  $F_{a,+}^{-1}(e^{-u})=(1-p_c)^{1/a}e^{-u/a}$, which leads to
$$\int_{1/\ep}^\infty F_{a,+}^{-1}(e^{-u}) u^{-1} \; du<\infty$$
for any $a>0$.

Similarly, one can obtain $G_{b,+}(x)=(1-p_c)^{-1}\exp(-1/x^b)$ for $0\leq \exp(-1/x^b)\leq 1-p_c$ and $G_{b,+}^{-1}(e^{-u})=(u-\log(1-p_c))^{-1/b}$, which is min-summable for all $b>0$.
\qed

\bigskip
\noindent
\textit{Proof of Corollary \ref{dexp}.}
The positive part of the edge-weight distribution is denoted by  $H_{\gamma}(t)=e\cdot \exp( -\exp(1/t^{\gamma}))$. Then $H_\gamma^{-1}(e^{-u})=(\log(1+u))^{-1/\gamma}$, which leads to 
$$\int_{1/\ep}^\infty (\log(1+u))^{-1/\gamma}u^{-1} \; du
\begin{cases}
<\infty & \text{ when } \gamma<1, \\
=\infty & \text{ when }\gamma\geq 1.
\end{cases}$$
The result follows from Theorem \ref{thm:main}.
\qed

\subsection{Proof of Theorem \ref{thm:extremal}:}

Let $C\in (0, \frac{1}{-2\log q_1})$. It follows from Proposition \ref{isolated_path} that with high probability there exists an isolated path $\Gamma$ of length $\lfloor C\log n\rfloor$. Let $d(\cdot,\cdot)$ denote the graph distance. Let $v,v'$ denote the two end points of this isolated path and $v_{\mathrm{mid}}$ a mid point such that $\max\{ d(v,v_{\mathrm{mid}}),d(v',v_{\mathrm{mid}})\}\geq  \lfloor \frac{\lfloor C\log n\rfloor}{2}\rfloor$.

 In order to reach all vertices in $\mathcal{G}_n$, the first passage percolation starting from vertex $u=1$ has to go through an isolated path of length $L=\lfloor \frac{\lfloor C\log n\rfloor}{2}\rfloor$.

In particular, let $\set{t_i:i\geq 1}$ be an i.i.d. collection of random variables with the edge weight distribution,  with high probability,   $\sum_{i=1}^L t_{i}\stod  \max_{v\in [n]}  T_n(1,v) $ where $\stod$ denotes the stochastic domination operation (i.e., we can construct the two random variables above on a common probability space such that whp $\sum_{i=1}^L t_{i} <  \max_{v\in [n]}  T_n(1,v) $).
The law of large numbers guarantees that whp for some $c > 0$,
$$\sum_{i=1}^L t_{i}\geq \frac{\E[t_e]L}{2}\geq c\log n.$$

\qed 

\subsection{Proof strategy for Conjecture  \ref{lb_passage_time}:} We will outline an argument that shows, in the setting of Corrolary \ref{dexp}(a), when $\gamma > 1$ then $T_n(u,v)$ grows at least like $(\log\log(n))^{1-1/\gamma}$. A full rigorous proof will require showing that the various coupling steps below can be carried out.   Let $u,v \in [n]$ be two vertices chosen independently and uniformly at random. To establish a lower bound for $T_n(u,v)$, the key idea is to construct first passage percolation clusters centered at $u$ and $v$, grown for enough time, but not too large so that the clusters are still disjoint since in this case $T_n(u,v)$ is bounded from below by the passage times needed to reach the boundaries of these clusters. If one can estimate (at least lower bound) the passage time to these boundaries then this would also result in a lower bound on $T_n(u,v)$.  

One tractable approach is as follows.  Grow the SWG from $v$ while collapsing every zero-weight cluster formed during the growth into a single point. We will only grow the (uncollapsed) $\SWG^v$ and $\SWG^u$ till size $O(n^\rho)$ for sufficiently small $\rho>0$ to ensure the two clusters are disjoint, and further these can be coupled to branching processes.
Following a similar argument to that of Proposition \ref{coupling_SWG_BP}, one should be able to obtain the coupling between the collapsed SWG of $v$ and $\BP(F^*,h^*)$ up to a certain generation $L=O(\log\log n)$ (here we view $\BP(F^*,h^*)$ as a discrete time process where generation represents the distance to the root). Instead, we can write $\mathcal{Z}:=\{\mathcal{Z}_m\}_{m\geq 1}$ as the discretized version of $\BP(F^*,h^*)$ where $\mathcal{Z}_m$ is the size of generation $m$. Letting $X$ denote the offspring distribution of $\mathcal Z$,  Lemma \ref{cluster_degree} shows that $\pr(X\geq x)\sim x^{-1/2}$. It then follows from \cite{Davies} that there exists a non-degenerate random variable $W$ such that 
\beq\label{BP_convergence}
2^m \log(1+\mathcal{Z}_m)\overset{a.s.}{\to}W
\eeq
where $W>0$ on survival. Survival occurs with probability one in our setting owing to Assumptions \ref{ass}. This allows us to estimate the size of $\BP(F^*,h^*)$ up to a certain generation, which then can be connected to the size of $\BP(F,h)$. Exploiting this connection yields that we should take  $L=O(\log\log n)$ to ensure the coupling between the collapsed SWG and $\mathcal{Z}$ holds up to generation $L$ and such that the clusters from the two vertices are still disjoint whp. 

It then remains to understand the passage time required for traveling from one of the sources, say $v$,  to vertices in the $L$-th generation of $\mathcal{Z}$. As a result of \eqref{BP_convergence}, for any $\ep>0$, there exists some $M:=M_\ep\in \mathbb{R}$ such that with probability at least $1-\ep$, $|(1/2)^m \log(1+\mathcal{Z}_m)-W|\leq 1$ for all $m\geq M$. Let $K:=K_\ep\in \mathbb{R}$ be such that $\pr(W>K)\leq \ep$. Then with probability at least $1-2\ep$, we have 
 $$\mathcal{Z}_m \leq \exp( (1+K)2^m) \quad \text{ for all }m\geq M_\ep.$$
Set $a_m:=\exp( (1+K)2^m)$ and let $\{\sigma_{m,i}: m\in\mathbb{N}, i\in [a_m]\}$ be a collection of i.i.d. edge weights following distribution $F^*$. It is straightforward to see that the shortest time to travel from generation $m$ to generation $m+1$ stochastically dominates
$$t_m:=\min_{i\in [a_m]} \sigma_{m,i}$$
and the passage time from $v$ to generation $L$ dominates $\sum_{m=M}^L t_m$.
Taking $s_m>0$ to be such that $F^*(s_m)\approx \exp(-(2+K)2^m)$, we have
\begin{align*}
\pr(t_m\leq s_m)&=1-(\pr(\sigma_{m,1}>s_m))^{a_m}=1-(1-F^*(s_m))^{a_m}\leq F^*(s_m)a_m\leq \exp(-2^m).
\end{align*}
One can choose $M$ to be sufficiently large so that with a large probability $t_m\leq s_m$ for all $m\geq M$ and hence with high probability, 
$$T_n(v,u) {\sustod} \sum_{m=M}^{L}t_m\geq \sum_{m=M}^{L}s_m=\sum_{m=M}^{L} [F^*]^{-1}(\exp (-(2+K)2^m)),$$
where once again $\sustod$ represents the stochastic domination ordering. 
In particular, when $F^*=H_\gamma$, $[F^*]^{-1}(\exp (-(2+K)2^m))\geq (\kappa m)^{-1/\gamma}$ for some $\kappa>0$ and 
$$\sum_{m=M}^{L} [F^*]^{-1}(\exp (-(2+K)2^m))\geq C L^{1-1/\gamma}.$$
Since we have taken $L\asymp \log\log n$,  and assuming the above coupling can be carried out, the above bound  would complete the proof.

\section*{Acknowledgements}
Bhamidi was partially supported by NSF DMS-2113662, DMS-2413928, and DMS-2434559 and NSF RTG grant DMS-2134107. Durrett was partially supported by NSF DMS 2153429.

\bibliography{refs.bib}

\end{document}